\documentclass[10pt,fullpage]{article}

\usepackage{amssymb}
\usepackage{amsmath}
\usepackage{amsthm}
\usepackage{epsfig}
\usepackage{enumerate}
\usepackage{multicol}
\usepackage{caption}
\usepackage{psfrag}
\usepackage{xcolor}
\usepackage{graphicx}
\usepackage{authblk}
\usepackage{fullpage}

\input epsf

\numberwithin{equation}{section}

\newtheorem{Lemma}{Lemma}[section]

\newtheorem{Th}[Lemma]{Theorem}
\newtheorem{Prop}[Lemma]{Proposition}
\newtheorem{Def}[Lemma]{Definition}
\newtheorem{claim}[Lemma]{Claim}
\theoremstyle{remark}
\newtheorem{Rk}[Lemma]{Remark}

\newcommand{\be}{\begin{equation}}
\newcommand{\ee}{\end{equation}}
\newcommand{\baa}{\begin{array}}
\newcommand{\eaa}{\end{array}}
\newcommand{\ba}{\begin{eqnarray}}
\newcommand{\ea}{\end{eqnarray}}

\def\epsilon{\varepsilon}

\def\R{{\mathbb R}}

\newcommand{\bee}{\begin{equation*}}
\newcommand{\eee}{\end{equation*}}
\newcommand{\bc}{\begin{cases}}
\newcommand{\ec}{\end{cases}}


\date{}
\title{Extinction and spreading of a species under the joint influence of climate change and a weak Allee effect: a two-patch model}
\author[1]{Juliette Bouhours}
\author[2]{Thomas Giletti}
\affil[1]{Departments of Mathematical and Statistical Sciences and of Biological Sciences,
University of Alberta, Edmonton, AB T6G 2G1, Canada}
\affil[2]{Institut Elie Cartan de Lorraine,
Universit\'{e} de Lorraine, Vandoeuvre-l\`{e}s-Nancy, France}
\begin{document}
\maketitle
\begin{abstract}

Many species see their range shifted poleward in response to global warming and need to keep pace in order to survive. To understand the effect of climate change on species ranges and its
consequences on population dynamics, we consider a space-time heterogeneous reaction-diffusion equation in dimension 1, whose unknown~$u (t,x)$ stands for a population density. 
More precisely, the environment consists of two patches moving with a constant climate shift speed $c \geq 0$: in the invading patch $\{ t >0 , \, x \in \R \, | \ x < ct \}$ the growth rate is negative and, in the receding patch $\{ t >0 , \, x \in \R \, | \ x \geq ct \}$ it is of the classical monostable type. 
Our framework includes species subject to a weak Allee effect, meaning that there may be a positive correlation between population size and its per capita growth rate. We study the large-time
behaviour of solutions in the moving frame and show that whether the population spreads or goes 
extinct depends not only on the speed $c$ but also, in some intermediate speed range, on the 
initial datum. This is in sharp contrast with the so-called `hair-trigger effect' in the 
homogeneous monostable equation, and suggests that the size of the population becomes a decisive factor under the joint influence of climate change and a weak Allee effect. Furthermore, our analysis exhibit sharp thresholds between spreading and extinction: in particular, we prove the 
existence of a threshold shifting speed which depends on the initial population, such that 
spreading occurs at lower speeds and extinction occurs at faster speeds.
\bigskip\\
\noindent{2010 \em{Mathematics Subject Classification}:  35B40, 35C07, 35K15, 35K57,  92D25}\\
\noindent{{\em Keywords:} Climate change, reaction-diffusion equations, travelling waves, long time behaviour, sharp threshold phenomena.}
\end{abstract}
\section{Introduction} 

In this paper we are interested in the following problem
\be\label{problemmono}\begin{cases}
\partial_t u-\partial_{xx}u=f(x-ct,u), &t\in(0,+\infty), \: x\in\R,\\
u(0,x)=u_0(x),& x\in\R,
\end{cases}\ee
where $c \geq 0$ and $u_0$ will be assumed to be a bounded, nonnegative and compactly supported function, and  
\be\label{f}
f(z,s)=\begin{cases}-s, &\text{if } z<0,\\
				g(s), &\text{if }z\geq 0.
				\end{cases}
\ee
Here $g \in  C^{1,r}_{loc} (\R , \R)$ is monostable in the sense that 
\be\label{monostable}
g(0)=g(1)=0, \quad g(s)>0 \quad \forall s\in(0,1), \quad g(s) < 0 \quad \forall s \not \in [0,1] , \quad g'(1)<0 < g'(0).\ee

This problem is motivated by the study of the effect of climate change on population persistence. In~\cite{MM00,WPetal02}, the different authors point out that global warming induces a shift of the climate envelope of some species toward higher altitude or latitude. Therefore, these species need to keep pace with their moving favourable habitat in order to survive. Here we model the evolution of the density~$u$ of a population using a reaction-diffusion equation where the reaction term, which accounts for the growth of the species, is heterogeneous with respect to both space and time. That is, we assume that the dispersion of the population is a diffusion process, and in~\eqref{problemmono} the growth term~$f$ depends on the shifting variable $x - ct$ where the nonnegative parameter~$c$ (which we assume to be constant) stands for the speed of the shift of the climate envelope. More precisely, while the growth term is monostable in the favourable environment \{$x- ct \geq 0\}$, it is negative in the unfavourable zone $\{x-ct < 0 \}$. We highlight here that by monostable, we only mean as stated in \eqref{monostable} that the growth rate of the population is positive when $u \in (0,1)$ (after renormalization, 1 stands for the carrying capacity in the climate envelope). However, we do not assume that the per capita growth rate is maximal at $u=0$, that is the population may be subjected to a weak Allee effect, which is a common feature of ecological species arising for instance from cooperative behaviours such as defense against predation~\cite{allee}. Let us also briefly note that our analysis easily carry out if the growth term $f$ in \eqref{f} is chosen to be any linear function $-\rho u$ with $\rho >0$ in the unfavourable patch $\{x -ct <0\}$: indeed our main results would still hold as they stand below, and we choose $\rho = 1$ here just for the sake of simplicity.

Such shifting range models were introduced in several papers to study similar ecological problems. In \cite{PL} and \cite{BDD}, the effect of climate change and shifting habitat on invasiveness properties is studied for a system of reaction-diffusion equations. In~\cite{BDNZ,BR1,BR2,Lietal14,Vo}, the authors deal with persistence properties under a shifting climate for a scalar reaction-diffusion equation in dimension 1 and higher, and exhibit a critical threshold for the shifting speed below which the species survives and above which it goes extinct. This problem has also been studied in the framework of integrodifference equations, where time is assumed to be discrete and dispersion is nonlocal~\cite{HZetal14,ZK,ZK13}. However, all the aforementioned papers hold under KPP type assumptions whose ecological meaning is that there is no Allee effect and which mathematically imply that the behaviour of solutions is dictated by a linearized problem around the invaded unstable state. In the context of~\eqref{problemmono}, the KPP assumption typically writes as $f(z,s) \leq \partial_s f(z,0) s$ for all $z \in \R$ and $s \geq 0$.

On the other hand a few papers investigate the case when no KPP assumption is made. In \cite{BRRK}, the authors analyse numerically the effect of climate change and the geometry of the habitat in dimension 2, again in the framework of reaction-diffusion equations, with or without Allee effect. In \cite{BN}, Bouhours and Nadin consider~\eqref{problemmono} when the size of the favourable zone is bounded under rather general assumptions on the reaction~$f$ in the favourable zone (including the classical monostable and bistable cases). More precisely, they have shown the existence of two speeds $\underline{c} < \overline{c}$ such that the population persists for large enough initial data when $c < \underline{c}$ and goes extinct when $c > \overline{c}$. However, for a fixed initial datum, it is not known in general whether there exists a threshold speed delimiting persistence and extinction. We will prove here that, when $f$ satisfies the hypotheses~\eqref{f} and~\eqref{monostable}, the answer is positive but also that, unlike in the KPP case~\cite{BDNZ,BR1,BR2,Lietal14,Vo}, the threshold speed depends non trivially on the initial datum.\\

Before stating our main results, let us start by giving some basic properties of problem~\eqref{problemmono}. Using sub and super solution method and comparison principle we know that, for bounded and nonnegative initial conditions, problem \eqref{problemmono} has a unique global and bounded solution. Because of the discontinuity of $f$ at $x=ct$, by solution we will always mean a function $u (t,x)$ which is $C^1$ with respect to~$x \in \R$ for all $t >0$, and which satisfies the equation in a classical sense on both $\{ t >0 \mbox{ and } x  < ct\}$ and $\{ t >0 \mbox{ and } x > ct \}$. The initial condition is understood as follows:
$$\underset{t\to0}{\lim}\:u(t,x)= u_0(x), \text{ for almost every } x\in\R.$$ 
In this paper we are interested in the asymptotic behaviour of the solution $u(t,x)$ of \eqref{problemmono} as time goes to infinity, and in particular whether the solution goes extinct or spreads, in the sense of Definition~\ref{def:main} below. To do so we will study the previous problem in the moving frame, i.e. letting $z:=x-ct$. If we define $u^c(t,z):=u(t,z+ct)$ to be the solution in the moving frame, then $u^c$ is solution of the parabolic equation
\be\label{problemmonobis}
\partial_t u-\partial_{zz}u-c\partial_z u=f(z,u), \quad t\in(0,+\infty), \: z\in\R,\\
\ee
such that $u^c(0,z)=u_0(z)$, for all $z\in\R$.

\subsubsection*{Notation}

In the following we will denote by $z$ the space variable in the moving frame and $x$ the variable in the non moving frame. With some slight abuse of notations, we will write $u(t,x)$ when we consider the solution in the non moving frame and $u(t,z)$ (instead of $u^c(t,z)$) for the solution in the moving frame. 

Let us also introduce the classical homogeneous monostable equation
\be\label{problemhomo1}
\partial_t v-\partial_{xx}v =g(v), \quad t\in(0,+\infty), \: x\in\R ,\\
\ee
and denote by $c ^* $ the minimal speed for existence of travelling wave solutions of \eqref{problemhomo1}, namely particular solutions of the type $v (t,x) = V (x- ct)$ where $V ( - \infty) = 1 > V (\cdot) > V (+\infty)=0$. It is well-known that $c^* \geq 2 \sqrt{g'(0)}$ and that, under the KPP hypothesis $g(s) \leq g'(0) s$ for all $s \geq 0$, then $c^* = 2 \sqrt{g'(0)}$. We refer to the seminal paper of Aronson and Weinberger~\cite{AW} for details.

\subsection{Main results}\label{sec:main_res}

As we are interested in the long time behaviour of the solution of problem \eqref{problemmonobis}, a first step is to classify the stationary solutions of \eqref{problemmonobis}. 
As we will see more precisely in Theorem \ref{th:stationary} in Section \ref{sec:stationarysol}, there are three types of nonnegative stationary solutions of \eqref{problemmonobis}:
\begin{itemize}
\item the trivial solution 0;
\item `ground states', namely positive stationary solutions $p$ of \eqref{problemmonobis} such that $p (\pm \infty) = 0$, among which there is a `critical ground state' which is characterized by the fact that it has the largest value at $z=0$ as well as the fastest decay as $z \to +\infty$;
\item a unique `invasion state', namely a positive stationary solution $p_+^c$ of \eqref{problemmonobis} such that $p_+^c (-\infty)=0$ and $p_+^c (+\infty)=1$.
\end{itemize}
Let us highlight here that these are stationary solutions in the moving frame with the same speed~$c$ as the shifting favourable zone, which may thus be seen as travelling waves in the original non moving frame. In particular, while we refer to $p_+^c$ as the `invasion state', the invasion is of course restricted to the favourable zone. In a similar fashion, by `ground state' we mean that the population migrates but does not spread away from the point $x =ct$.

While the invasion state always exists, there may exist either none or an infinity of ground states depending on the value of $c$. In the latter case, ground states all lie below the invasion state, and each of them will be characterized by its value at $z=0$. We will denote by $p_\alpha$ the ground state such that $p_\alpha(0)=\alpha$. The supremum of the admissible $\alpha$ such that $p_\alpha$ exists will be denoted by $\alpha^*_c$ and, as mentioned above, we call $p_{\alpha^*_c}$ the critical ground state. Proposition~\ref{prop:exp_decay} describes the asymptotics of the ground states $p_\alpha (z)$ as $z \to +\infty$ depending on $\alpha$, which will justify our statement that the critical ground state has the fastest decay at infinity. The discussion above will be made more rigorous in Section~\ref{sec:stationarysol}.

From this, we can be more precise about what we mean by extinction or spreading of the solution $u(t,z)$ of \eqref{problemmonobis}:
\begin{Def}\label{def:main} Let $u_0$ be a nonnegative, bounded and compactly supported initial datum, and $u(t,z)$ be the solution of \eqref{problemmonobis} with initial condition $u_0$. We say that
\begin{itemize}
\item \textbf{extinction} occurs if $u (t,z)$ converges uniformly with respect to $z$ to 0 as time~$t$ goes to infinity;
\item \textbf{grounding} occurs if $u (t,z)$ converges uniformly with respect to $z$ to the critical ground state as time~$t$ goes to infinity;
\item \textbf{spreading} occurs if $u (t,z)$ converges locally uniformly to the invasion state as time goes to infinity.
\end{itemize}
\end{Def}
Note that the fact that the convergence is or is not uniform is a consequence of the choice of compactly supported initial data (clearly the solution may never converge uniformly to the invasion state). Furthermore, we narrow grounding to the large-time convergence to the critical ground state (we again refer to Theorem~\ref{th:stationary} for a more precise definition). The reason is that our arguments will largely rely on the fact, which is contained in our main statements below and which will be proved in Section~\ref{sec:convstatsol}, that the solution may never converge to a non critical ground state when the initial condition $u_0$ is compactly supported.\\

Our first theorem investigates the large-time behaviour of the solution of \eqref{problemmonobis} in various speed ranges:
\begin{Th}\label{th:regime}
Let $u$ be the solution of~\eqref{problemmonobis} with a nonnegative, non trivial, bounded and compactly supported initial datum~$u_0$.
\begin{enumerate}[$(i)$]
\item If $0 \leq c < 2 \sqrt{g'(0)}$, then spreading occurs.
\item If $2\sqrt{g'(0)} \leq c < c^*$ (provided such $c$ exists), then both spreading and extintion may occur depending on the choice of $u_0$. To be more precise, there exist initial data $u_{0,1} > u_{0,2}$ such that spreading occurs for $u_{0,1}$, and extinction occurs for $u_{0,2}$.
\item If $c \geq c^*$, then extinction occurs.
\end{enumerate}
\end{Th}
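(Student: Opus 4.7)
The overall strategy is to analyse~\eqref{problemmonobis} via tailored sub- and super-solutions that respect the discontinuity at $z=0$, and to rely on monotone dynamics to identify the limits. I would treat the three speed ranges separately.

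For part~(i), where $c<2\sqrt{g'(0)}$, I would construct a compactly supported stationary sub-solution strictly inside the favourable zone. Pick $[L_1,L_2]\subset(0,+\infty)$ with $L_2-L_1$ large enough that $\bigl(\pi/(L_2-L_1)\bigr)^2+c^2/4<g'(0)$, which is possible precisely because $c^2/4<g'(0)$. The associated Dirichlet principal eigenfunction $\phi_1$ of $-\partial_{zz}-c\partial_z$ on $[L_1,L_2]$, scaled by a small $\epsilon>0$ (using $g(s)\geq(g'(0)-\eta)s$ for $s$ near $0$), is a stationary sub-solution of~\eqref{problemmonobis}. By the strong maximum principle any nontrivial nonnegative $u_0$ eventually dominates $\epsilon\phi_1\mathbf{1}_{[L_1,L_2]}$, so by comparison $u(t,\cdot)\geq\epsilon\phi_1\mathbf{1}_{[L_1,L_2]}$ thereafter. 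The trajectory starting from this sub-solution is non-decreasing in time and bounded, hence converges to a stationary solution lying above it; since for $c<2\sqrt{g'(0)}$ the characteristic roots of $\lambda^2+c\lambda+g'(0)=0$ are complex, no ground state exists (a positive solution cannot decay monotonically through an oscillatory regime at $+\infty$), and the limit must be the invasion state $p_+^c$, which gives spreading.

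For part~(iii), fix $c\geq c^*$. After a short transient we may assume $u\leq 1$ by the maximum principle, and then $f(z,u)\leq g(u)$ on $\R$, so $u(t,x)\leq v(t,x)$ where $v$ solves~\eqref{problemhomo1} with the same initial datum. By Aronson--Weinberger, $v(t,x)\to 0$ uniformly on $\{|x|\geq(c^*+\delta)t\}$ for every $\delta>0$, so in the moving frame, for $c>c^*$, $u(t,z)\leq v(t,z+ct)\to 0$ uniformly on any half-line $\{z\geq z_0\}$. In the remaining region $\{z<z_0\}$, which lies in the unfavourable zone for $t$ large, a decaying super-solution for $\partial_t u-\partial_{zz}u-c\partial_z u=-u$ (for instance $Me^{-\eta t}$ with $0<\eta\leq 1$, matched at the boundary $z=z_0$) yields uniform decay. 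The critical case $c=c^*$ is more delicate, since the bound $v(t,z+c^*t)$ no longer moves in the frame; here I would exploit the sublinear (Bramson-type) lag of the spreading front of~\eqref{problemhomo1} behind $c^*t$, which in the moving frame eventually sweeps the level sets of $u$ into the unfavourable zone.

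For part~(ii), $2\sqrt{g'(0)}\leq c<c^*$. Extinction for some $u_{0,2}$: the linearised operator $-\partial_{zz}-c\partial_z-Q(z)$ with step potential $Q(z)=g'(0)\mathbf{1}_{z\geq 0}-\mathbf{1}_{z<0}$ is isospectral (via conjugation by $e^{cz/2}$) to a Schr\"odinger operator whose potential is bounded below by $c^2/4-g'(0)\geq 0$, with strict inequality when $c>2\sqrt{g'(0)}$; the linearised semigroup therefore decays. For $u_{0,2}$ with $\|u_{0,2}\|_\infty$ small enough the nonlinear solution stays in the linear regime where this decay dominates the weak Allee correction, hence $u(t,\cdot)\to 0$ uniformly, while at the endpoint $c=2\sqrt{g'(0)}$ the decay is only algebraic but still implies extinction after a finer comparison. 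Spreading for some $u_{0,1}$: using that $c<c^*$ and the existence of ground states in this range, take $u_{0,1}$ large and localised (for instance $u_{0,1}=M\mathbf{1}_{[-L,L]}$ with $M$ and $L$ both large), so that it dominates a compactly supported sub-solution built near the critical ground state $p_{\alpha^*_c}$; the monotone trajectory then rises above every ground state, leaving $p_+^c$ as the only available limit. The main obstacles I foresee are the critical case $c=c^*$ in part~(iii), where rigid super-solutions built from the homogeneous wave no longer drive $u$ to $0$, and the spreading step in part~(ii), where a truly compactly supported sub-solution dominating the basin of attraction of $0$ has to be constructed despite the absence of an eigenfunction argument as in part~(i); in both cases the weak Allee effect rules out a straightforward KPP reduction, so the proof must exploit the specific structure of~$f$ and of the ground states described in Section~\ref{sec:stationarysol}.
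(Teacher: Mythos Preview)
Your outline for part~(i) and for $c>c^*$ in part~(iii) is essentially the paper's: a small compactly supported sub-solution inside $\{z>0\}$ forces convergence to~$p_+^c$ when $c<2\sqrt{g'(0)}$, and comparison with the homogeneous problem together with the unfavourable-zone decay gives extinction when $c>c^*$. The paper builds the sub-solution by phase-plane rather than eigenfunctions, but the mechanism is the same.

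The genuine gap is in your extinction argument for part~(ii). The whole point of the weak Allee regime is that one may have $g(u)>g'(0)u$ for small $u$; hence the linearisation with potential $Q(z)$ is a \emph{sub}-solution of the nonlinear problem, not a super-solution, and its decay gives no upper bound on~$u$. For $c>2\sqrt{g'(0)}$ you can rescue this by the buffer $g(u)\le (g'(0)+C\epsilon^r)u$ on $\{u\le\epsilon\}$ and a continuation argument, since the spectral gap survives a small perturbation. But at $c=2\sqrt{g'(0)}$ the gap is zero and any positive Allee correction destroys it; ``algebraic decay plus a finer comparison'' does not close because the Duhamel term coming from $g(u)-g'(0)u\ge0$ is of the wrong sign. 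The paper avoids linearisation entirely: since two distinct ground states exist in this range, $\hat\phi:=\min\{p_\alpha,p_{\alpha^*_c}\}$ is a genuine (generalized) super-solution of the \emph{nonlinear} problem, the trajectory from $\hat\phi$ is monotone decreasing, and no ground state can sit below both $p_\alpha$ and $p_{\alpha^*_c}$ (they cross by Proposition~\ref{prop:exp_decay}), so the limit is~$0$. This works uniformly on $[2\sqrt{g'(0)},c^*)$.

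Two further points. Your spreading step in~(ii) is too vague: ``a sub-solution built near $p_{\alpha^*_c}$'' does not explain why the monotone limit cannot be a ground state. The paper's key observation is a phase-plane fact: every ground state satisfies $\max_z p_\alpha\le\theta_c<1$ for a fixed $\theta_c$ (the abscissa where the extremal trajectory meets $\{p'=0\}$), and a compactly supported sub-solution with $\max>\theta_c$ can be built from any trajectory through $(\theta_1,0)$ with $\theta_1\in(\theta_c,1)$. For $c=c^*$ in part~(iii), your Bramson-lag idea is plausible but not what the paper does: there one only shows that spreading fails (via a shifted minimal-wave super-solution), and extinction is then deduced from a separate convergence result (Proposition~\ref{prop:conv}) based on zero-number arguments, since no critical ground state exists at $c=c^*$.
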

Recall from~\cite{AW} that the minimal wave speed of~\eqref{problemhomo1} is also the spreading speed of solutions of the Cauchy problem with compactly supported initial data. Therefore, the third statement of Theorem~\ref{th:regime} simply means that, when the climate shifts faster than the species spreads in a favourable environment, then the species cannot keep pace with its climate envelope and goes extinct as time goes to infinity. On the other hand, when $c$ is less than $2 \sqrt{g'(0)}$ (statement $(i)$ of Theorem~\ref{th:regime}) which is the speed associated with the linearized problem around $u=0$, then any small population is able to follow its habitat and thrive. In particular, when $c^* = 2 \sqrt{g'(0)}$, we retrieve a threshold speed between persistence and extinction, which as in the KPP framework of~\cite{BDNZ,BR1,BR2,Lietal14,Vo} does not depend on the initial datum (let us note here that, while the KPP assumption implies that $c^* = 2 \sqrt{g'(0)}$, the converse does not hold~\cite{hrothe}).

Nonetheless, a striking feature of Theorem~\ref{th:regime} is the fact that when $c\in [2\sqrt{g'(0)}, c^*)$, whether the solution persists or not depends on the initial datum, see statement $(ii)$ above. This is in sharp contrast with the so-called `hair-trigger effect' for the classical homogeneous monostable equation~\eqref{problemhomo1}, whose solution spreads as soon as the initial datum is non trivial and nonnegative. This result also highlights qualitative differences with the KPP framework of~\cite{BDNZ,BR1,BR2,Lietal14,Vo}, where the persistence of the population depends only on the value of $c$. The biological implication is that under the combination of a weak Allee effect and a shifting climate, the size of the initial population becomes crucial for the survival of the species.

This new behaviour can be understood from the appearance, in the range of speeds $c \in [2 \sqrt{g'(0)},c^*)$, of intermediate stationary solutions between 0 and $p_+^c$. It turns out that, although the equation~\eqref{problemmonobis} is of the monostable type in the half line $\{z > 0\}$, it shares some features with the usual bistable case as the trivial state 0 becomes stable with respect to some small enough compactly supported pertubations. This leads to the following dichotomy, or sharp threshold between extinction and spreading, in the same spirit as the results of~\cite{DM,MZ,polacik,zlatos} in the spatially homogeneous framework:
\begin{Th}\label{th:regime_sharp}
Assume that $2 \sqrt{g'(0)} < c^*$ and choose some $c \in [2 \sqrt{g'(0)},c^*)$. Then for any strictly ordered and continuous (in the $L^1 (\R)$-topology) family $(u_{0,\sigma})_{\sigma > 0}$ of initial data satisfying the same assumptions than in Theorem \ref{th:regime}, there exists some $\sigma^* \in [0,+\infty]$ such that spreading occurs for $\sigma > \sigma^*$, extinction for $\sigma < \sigma^*$, and grounding for $\sigma= \sigma^*$ whenever $\sigma^* \in (0,+\infty)$.
\end{Th}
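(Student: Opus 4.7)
My plan is to split $(0,+\infty)$ according to the large-time behaviour and prove that both the spreading regime $\Sigma_s = \{\sigma : \text{spreading occurs for } u_{0,\sigma}\}$ and the extinction regime $\Sigma_e = \{\sigma : \text{extinction occurs for } u_{0,\sigma}\}$ are open in $\sigma$, so that their common boundary reduces to a single value at which grounding must occur. The starting point is monotonicity: because the family $(u_{0,\sigma})_{\sigma}$ is strictly ordered and the parabolic comparison principle applies to~\eqref{problemmonobis}, one has $u_{\sigma_1}(t,\cdot) \leq u_{\sigma_2}(t,\cdot)$ for all $t>0$ whenever $\sigma_1 \leq \sigma_2$, so that $\Sigma_s$ is an upper set and $\Sigma_e$ is a lower set in $(0,+\infty)$.

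For openness of $\Sigma_s$: given $\sigma_0 \in \Sigma_s$, I would pick $T$ large enough that $u_{\sigma_0}(T,\cdot)$ lies locally uniformly close to $p_+^c$, and choose a compactly supported strict sub-solution $\phi$ of the stationary equation satisfying $\phi \leq u_{\sigma_0}(T,\cdot)$ and whose parabolic evolution spreads to $p_+^c$. The $L^1$-continuity of $\sigma \mapsto u_{0,\sigma}$ combined with parabolic $L^1 \to L^\infty_{\mathrm{loc}}$ smoothing propagates the inequality $u_\sigma(T,\cdot) \geq \phi$ to $\sigma$ near $\sigma_0$, so spreading persists by comparison. For openness of $\Sigma_e$: given $\sigma_0 \in \Sigma_e$, pick $T$ with $\|u_{\sigma_0}(T,\cdot)\|_\infty < \delta$, where $\delta$ is a threshold below which small compactly supported data yield extinction. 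Existence of such $\delta$ would be obtained by constructing a positive super-solution of the moving-frame equation of the form $M\psi(z)e^{-\mu t}$ (with $\mu \geq 0$) where $\psi$ is exponentially decaying at $+\infty$, built from the stable principal eigenpair of $-\partial_{zz}-c\partial_z-g'(0)$ on $z\geq 0$ (whose existence is ensured by $c \geq 2\sqrt{g'(0)}$) matched to a bounded decaying profile on $z<0$. The same $L^1 \to L^\infty_{\mathrm{loc}}$ continuity at time $T$ then yields openness of $\Sigma_e$.

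Combining monotonicity and openness, $\Sigma_e = (0,\sigma_e)$ and $\Sigma_s = (\sigma_s,+\infty)$ for some $0 \leq \sigma_e \leq \sigma_s \leq +\infty$, and it remains to show $\sigma_e = \sigma_s$ and that grounding occurs at this common value. For any $\sigma \in [\sigma_e,\sigma_s]$, the orbit $u_\sigma$ neither spreads nor goes extinct; using parabolic $C^{2}_{\mathrm{loc}}$ estimates and the convergence-to-stationary analysis of Section~\ref{sec:convstatsol}, every accumulation point of $u_\sigma(t,\cdot)$ as $t\to+\infty$ is a nonnegative stationary solution of~\eqref{problemmonobis}. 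The trivial state and the invasion state $p_+^c$ are both excluded, so only ground states remain. Exploiting the structure in Theorem~\ref{th:stationary} --- specifically, that any sub-critical ground state $p_\alpha$ with $\alpha<\alpha^*_c$ can be strictly dominated from above by a compactly supported perturbation whose forward orbit spreads --- no sub-critical ground state can belong to the $\omega$-limit of a compactly supported orbit, leaving $p_{\alpha^*_c}$ as the only candidate. Hence grounding occurs throughout $[\sigma_e,\sigma_s]$. If two distinct parameters $\sigma_1 < \sigma_2$ produced grounding, the strong maximum principle would give $u_{\sigma_1}(t,\cdot) < u_{\sigma_2}(t,\cdot)$ strictly while $\|u_{\sigma_2}-u_{\sigma_1}\|_\infty \to 0$; since their difference satisfies a linear parabolic equation whose coefficients converge to those of the linearisation at $p_{\alpha^*_c}$, and this latter operator is unstable (its principal eigenvalue is positive, as a consequence of the fold characterisation of $p_{\alpha^*_c}$ in the family $\{p_\alpha\}$), the difference cannot vanish in the limit, a contradiction. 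This gives $\sigma_e = \sigma_s =: \sigma^*$ and concludes the proof.

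The main obstacles, in my view, are the openness of $\Sigma_e$ at the marginal endpoint $c = 2\sqrt{g'(0)}$, where the linearisation at $u=0$ is only marginally stable and the decaying super-solution must be constructed with care (possibly with a polynomial correction to the exponential decay), and the linear instability of the critical ground state $p_{\alpha^*_c}$, which is essential both to rule out sub-critical ground states from the $\omega$-limits and to preclude simultaneous grounding at two distinct parameters. This last point is the structural mechanism behind the sharp threshold phenomenon of Theorem~\ref{th:regime_sharp}.
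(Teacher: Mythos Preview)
Your overall architecture --- defining the spreading and extinction sets, proving both are open, and invoking Proposition~\ref{prop:conv} to conclude grounding on the interval in between --- matches the paper's proof in Section~\ref{sec:sharpdich} closely. Your openness arguments are essentially the same as the paper's Claim~\ref{claim_sigma1}, including the need for a polynomial correction to the exponential supersolution at $c = 2\sqrt{g'(0)}$.

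There is a genuine gap, however, in your collapse of the grounding interval $[\sigma_e,\sigma_s]$ to a single point. You propose to argue that if $\sigma_1 < \sigma_2$ both ground, the positive difference $u_{\sigma_2}-u_{\sigma_1}$ satisfies a linear equation whose coefficients converge to the linearisation at $p_{\alpha^*_c}$, and that instability of this linearisation prevents the difference from vanishing. Two issues: first, you assert the principal eigenvalue of the linearisation is positive ``as a consequence of the fold characterisation of $p_{\alpha^*_c}$'', but a fold typically yields a \emph{zero} eigenvalue, not a positive one, and the paper never establishes linear instability of $p_{\alpha^*_c}$; in the unbounded setting with a drift term this spectral claim is nontrivial and would require a separate proof. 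Second, even granting a positive principal eigenvalue, the conclusion that a positive solution of a time-dependent linear parabolic equation with asymptotically autonomous coefficients cannot decay to zero is itself a nontrivial dynamical statement on the whole line (no spectral gap is given, and the essential spectrum may touch zero).

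The paper bypasses all of this with a much more elementary argument (Lemma~\ref{lem:lambdacritground}): using the strict ordering, one shows that for small $\delta,a>0$ and large enough $t_0$ one has $u_{\sigma_1}(t,z) < u_{\sigma_2}(t+\delta,z+a)$ for all $t\geq t_0$ and $z\in\R$, by a comparison exploiting the monotonicity of $f(z,\cdot)$ in $z$. Passing to the limit gives $\lim_{t\to\infty} u_{\sigma_1}(t,z) \leq p_{\alpha^*_c}(z+a)$, and since $p_{\alpha^*_c}(\cdot+a)$ is \emph{not} one of the three admissible limits of Proposition~\ref{prop:conv}, the only possibility is extinction for $\sigma_1$. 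This immediately forces $\sigma_e=\sigma_s$. A secondary remark: your mechanism for excluding sub-critical ground states from the $\omega$-limit (``dominated from above by a compactly supported perturbation whose forward orbit spreads'') does not make sense as stated, since no compactly supported function can dominate a positive ground state; fortunately you also invoke Section~\ref{sec:convstatsol}, which handles this point correctly via the exponential tail bounds of Proposition~\ref{th:boundexp}.
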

Theorem~\ref{th:regime_sharp} further highlights that spreading and extinction are the two reasonable outcomes. Indeed, the only other possibility is grounding, and as can be seen when looking at an ordered family of initial data~$(u_\sigma)_\sigma$, this may only occur for a critical choice of the parameter $\sigma^* \in (0,+\infty)$.

Note that this threshold phenomenon strongly relies on our choice of compactly supported initial data. Indeed, one may for instance check that for any initial datum which does not decay to 0 as $z \to +\infty$, spreading necessarily occurs. A similar threshold phenomenon was also exhibited in~\cite{MZ} in the homogeneous but degenerate monostable framework, namely equation~\eqref{problemhomo1} where $g$ satisfies~\eqref{monostable} except that $g'(0)=0$.\\

The above two theorems describe what happens in the different range speeds depending on the initial data. Let us now adopt a different approach where the initial datum is fixed and the speed~$c$ varies. Our last theorem writes as follows:
\begin{Th}\label{th:critspeed}
For all nonnegative, non trivial, bounded and compactly supported function $u_0$, there exists $c (u_0) \in [2 \sqrt{g'(0)},c^*]$ such that the following three statements hold true.
\begin{enumerate}[$(i)$]
\item For all $c < c (u_0)$, spreading occurs.
\item For all $c > c (u_0)$, extinction occurs.
\item If $c = c (u_0)$:
\begin{enumerate}[$(a)$]
\item if $c(u_0) > 2 \sqrt{g'(0)}$, grounding occurs and $ 2 \sqrt{g'(0)}<c(u_0)<c^*$;
\item if $c(u_0)=2\sqrt{g'(0)}<c^*$, there may be either grounding or extinction;
\item if $c = c(u_0) = 2 \sqrt{g'(0)} = c^*$, extinction occurs.
\end{enumerate}
\end{enumerate}
\end{Th}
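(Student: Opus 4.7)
The plan is to set $c(u_0) := \sup\{c \geq 0 \,:\, \text{spreading occurs at speed } c \text{ for } u_0\}$, which by Theorem~\ref{th:regime} lies in $[2\sqrt{g'(0)}, c^*]$. The argument combines three ingredients: monotonicity of the long-time behaviour in $c$; the trichotomy extinction/grounding/spreading (which for any compactly supported $u_0$ follows by applying Theorem~\ref{th:regime_sharp} to the family $\sigma u_0$ when $c \in [2\sqrt{g'(0)}, c^*)$, and is trivial otherwise); and the sharp threshold itself, used to handle the borderline at $c(u_0)$.

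The monotonicity rests on a direct comparison in the non-moving frame. After reducing to the case $u_0 \leq 1$ (possible by restarting from $u(T_0, \cdot)$ for some $T_0$ large enough), solutions stay in $[0,1]$, on which $f(z,u)$ is nondecreasing in $z$ since $g(u) \geq 0 \geq -u$. For $c_2 < c_1$ with the same $u_0$, this gives $u_2(t,x) \geq u_1(t,x)$ pointwise. Extinction at $c_2$ therefore propagates to extinction at $c_1$, since uniform convergence in $x$ of $u(t,x)$ to $0$ is equivalent to uniform convergence in $z$ of $u^c(t,z)$. Conversely, spreading at $c_1$ gives $u_1(T_0, c_1 T_0 + Z_0) \geq 1-\varepsilon$ for suitable $T_0, Z_0$, and the comparison transfers this to $u^{c_2}(T_0, (c_1-c_2) T_0 + Z_0) \geq 1-\varepsilon$ at a point tending to $+\infty$; this rules out uniform convergence of $u^{c_2}$ to $0$ or to the critical ground state (which vanishes at $+\infty$), forcing spreading by the trichotomy.

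A third key observation is that grounding can occur for at most one value of $c$: if grounding held at $c < c'$, then picking $x = c't$ gives $u_c(t,c't) = p_{\alpha^*_c}((c'-c)t) + o(1) \to 0$ while $u_{c'}(t,c't) = \alpha^*_{c'} + o(1) > 0$, contradicting $u_c \geq u_{c'}$. Combined with monotonicity and the trichotomy, the spreading and extinction sets must be contiguous intervals meeting at the single point $c(u_0)$, yielding $(i)$ and $(ii)$. It remains to identify the behaviour at $c(u_0)$. Spreading at $c(u_0)$ is ruled out: by continuity of $u^c(T,\cdot)$ and of the critical ground state $p_{\alpha^*_c}$ in $c$, it would force $u^{c(u_0)+\delta}(T,\cdot) > p_{\alpha^*_{c(u_0)+\delta}}$ for $\delta$ small and $T$ large, hence spreading at $c(u_0)+\delta$ by Theorem~\ref{th:regime_sharp}, contradicting the definition of $c(u_0)$. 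Symmetrically, when $c(u_0) > 2\sqrt{g'(0)}$, extinction at $c(u_0)$ would force $u^{c(u_0)-\delta}(T,\cdot)$ uniformly small and hence extinction at $c(u_0)-\delta$ by Theorem~\ref{th:regime_sharp}, contradicting spreading there; this gives grounding at $c(u_0)$ in case $(iii)(a)$, and the strict inequality $c(u_0) < c^*$ then follows because extinction at $c^*$ (Theorem~\ref{th:regime}$(iii)$) rules out grounding at $c^*$. Case $(iii)(c)$ is immediate from Theorem~\ref{th:regime}$(iii)$, and in case $(iii)(b)$ the symmetric argument is unavailable since Theorem~\ref{th:regime_sharp} does not extend below $2\sqrt{g'(0)}$, so both grounding and extinction remain possible.

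The main obstacle is the analysis at $c = c(u_0)$: it requires continuous dependence of both $u^c(T,\cdot)$ and $p_{\alpha^*_c}$ on $c$, together with a sufficiently robust form of Theorem~\ref{th:regime_sharp} applicable to perturbed initial data uniformly in a neighbourhood of $c(u_0)$. The borderline regime $c = 2\sqrt{g'(0)}$, where the stationary picture degenerates and the sharp threshold is no longer available, is the subtlest point of the analysis.
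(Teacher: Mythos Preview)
Your overall strategy matches the paper's: monotonicity in $c$ via comparison in the non-moving frame, the trichotomy (the paper uses Proposition~\ref{prop:conv} directly rather than deducing it from Theorem~\ref{th:regime_sharp}), and then a borderline analysis at $c=c(u_0)$. The gaps are in this last step, where your reliance on comparison with the critical ground state $p_{\alpha^*_c}$ does not close.

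To rule out spreading at $c(u_0)$, you claim that spreading forces $u^{c(u_0)+\delta}(T,\cdot) > p_{\alpha^*_{c(u_0)+\delta}}$ globally for large $T$ and small $\delta$, hence spreading at $c(u_0)+\delta$ by Theorem~\ref{th:regime_sharp}. Two problems: first, the convergence of $u^{c(u_0)}(T,\cdot)$ to $p_+^{c(u_0)}$ is only locally uniform, so the inequality at the right tail $z\to+\infty$ (where both sides decay to~$0$) is not established and would itself require tail control of the kind in Proposition~\ref{th:boundexp}; second, even granting a global strict inequality, being above $p_{\alpha^*_c}$ does not by itself preclude grounding from above, and Theorem~\ref{th:regime_sharp} is stated for compactly supported ordered families and does not deliver this implication. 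The paper avoids both issues by comparing instead with the \emph{compactly supported} subsolution $u_{0,1}$ of Theorem~\ref{thm:interm_0}, which lies strictly below $p_+^c$ and is known to spread: one only needs $u^{c'}(T,\cdot)\geq u_{0,1}(\cdot-Z)$ on a bounded interval, which follows from locally uniform continuity in~$c$.

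Symmetrically, to rule out extinction at $c(u_0)>2\sqrt{g'(0)}$, ``uniformly small'' is not enough: the basin of attraction of~$0$ consists of data lying below ground states, which themselves decay to~$0$ at $\pm\infty$, so controlling $\sup_{|z|\leq Z}u^{c'}(T,z)$ by continuity in~$c$ leaves the tail uncontrolled relative to any such barrier. The paper handles this by building an explicit exponential supersolution $\overline u(z)=\eta\, e^{-(c(u_0)-\delta)z/2}$, valid simultaneously for all $c'\in[c(u_0)-\delta,c(u_0)]$, and invoking the uniform-in-time exponential bound of Proposition~\ref{th:boundexp} to trap $u^{c(u_0)}$ (and then $u^{c'}$) below $\overline u$ for all large~$z$. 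This exponential bound is the key technical ingredient your sketch is missing; without it the openness of the extinction regime in~$c$ does not follow, and the ``symmetric argument'' you invoke collapses.
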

Theorem~\ref{th:critspeed} shows that there still exists, in the general monostable framework, a threshold forced speed below which spreading occurs and above which the solution goes extinct. As mentioned above, the existence of such a threshold for persistence was already known in the KPP framework. However, Theorem~\ref{th:critspeed} together with Theorem~\ref{th:regime} clearly imply that $c (u_0)$ depends in a non trivial way on the initial datum as soon as $c^* > 2 \sqrt{g'(0)}$. 
From an ecological point of view, this means that the persistence of the population is determined by the value of the climate shift speed with respect to this threshold. When the per capita growth rate of the population is optimal at zero density (no Allee effect), this threshold speed is independent of the initial datum, whereas in the presence of a weak Allee effect, this threshold depends on the size of the initial datum. More precisely, it follows from Theorem~\ref{th:regime_sharp} that $c (u_0)$ is nondecreasing with respect to the initial condition $u_0$, so that a large population will be less sensitive to climate change in the sense that it can keep pace with a faster shifting habitat than a smaller population.

\bigskip

\begin{Rk}
The $C^{1,r}_{loc}$ regularity of $g$ plays an important role in our main results whenever $c = 2 \sqrt{g'(0)}$. Indeed assume for instance that $\displaystyle g(u) \geq g'(0) u \left(1 - \frac{1}{\ln u} \right)$ in a neighbourhood of $0$. From the phase plane analysis of the ODE $$p'' + 2 \sqrt{g'(0)} p' + f(z,p)=0,$$ and proceeding as in Section~\ref{sec:stationarysol}, one may check that there does not exist any nonnegative and bounded stationary solution other than 0 and the invasion state $p_+^c$. While we do not study such a case here, this leads us to formally expect that spreading then occurs for all initial data when $c = 2 \sqrt{g'(0)}$.
\end{Rk}
\bigskip

\textbf{\underline{Organisation of the paper}}
\bigskip

In Section \ref{sec:stationarysol} we study the stationary problem in the moving frame, i.e. the stationary solutions of~\eqref{problemmonobis}. In Theorem~\ref{th:stationary} we classify all the different stationary solutions, and then in Section~\ref{sec:expdecay} we describe the decay of ground states to 0 at infinity. In Section~\ref{sec:regime} we examine the different speed ranges and prove Theorem~\ref{th:regime}. In particular we show that the hair-trigger effect does not hold in the intermediate speed range $c \in [2 \sqrt{g'(0)},c^*)$, see Theorem~\ref{thm:interm_0}. In Section~\ref{sec:convstatsol}, we prove the convergence of the solution of~\eqref{problemmonobis} to a stationary solution as time goes to infinity. Furthermore, combining the uniform in time exponential estimates of Section~\ref{sec:energy} and the decay properties of ground states from Section~\ref{sec:expdecay}, we show that the limiting stationary solution may only be 0, the invasion state or the critical ground state. In the last Section~\ref{sec:sharpf}, we deal with the sharp threshold phenomena and prove both Theorems~\ref{th:regime_sharp} and~\ref{th:critspeed}. 
Lastly, we include in an Appendix~\ref{A:number_zero} some results on the so-called `zero number argument' from~\cite{A,DM}, which we use in Section~\ref{sec:convstatsol} and in particular for the large-time convergence to a stationary solution.

\section{Stationary solutions in the moving frame}\label{sec:stationarysol}
In this section, we are interested in the stationary solutions of the equation~\eqref{problemmonobis}, which may also be seen as travelling wave solutions of the original problem in the sense that they are also entire solutions of \eqref{problemmono} moving with constant speed and profile. We will prove the following theorem which classifies all the stationary solutions of~\eqref{problemmonobis}, making more rigorous the discussion in the beginning of Section~\ref{sec:main_res}.

\begin{Th}\label{th:stationary}
All the positive and bounded stationary solutions of \eqref{problemmonobis} can be classified as follows.
\begin{enumerate}[(i)]
\item For any $c \geq 0$, there exists a maximal positive solution $p_+^c$, which we call \textbf{invasion state} and satisfies
$$0 = \lim_{z \to -\infty} p_+^c < \lim_{z \to +\infty} p_+^c =1 \ , \quad \partial_z p_+^c (z) >0.$$
\item If $c \geq 2 \sqrt{g'(0)} $, there exists $\alpha^*_c \in (0, p_+^c (0)]$ and a family of positive stationary solutions $(p_\alpha)_{0 < \alpha < \alpha^*_c} $, that will be called \textbf{ground states}, which satisfy:
\be\label{palphaproperties}
\lim_{z \to \pm \infty} p_\alpha = 0 \ , \quad p_\alpha < p_+^c \ , \quad p_\alpha (0 ) = \alpha ,\ee
and $c\mapsto\alpha^*_c$ is nondecreasing. Moreover:
\begin{enumerate}[(a)]
\item if $c < c^*$, then $\alpha^*_c < p_+^c (0)$ and there exists a positive stationary solution $p_{\alpha^*_c}$ satisfying the same properties~\eqref{palphaproperties}, which we call the \textbf{critical ground state};
\item on the other hand, if $c \geq c^*$, then $\alpha^*_c = p_+^c (0)$ and $p_\alpha \to p_+^c$ locally uniformly as $\alpha \to p_+^c (0)$.
\end{enumerate}
\end{enumerate}
There exists no other positive and bounded stationary solution of \eqref{problemmonobis} than the ones defined above.
\end{Th}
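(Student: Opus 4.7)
The stationary equation decouples into a linear ODE on $\{z<0\}$ and the autonomous planar system
\[ p'=q,\qquad q'=-cq-g(p) \]
on $\{z\geq 0\}$, glued $C^1$ at $z=0$. On $\{z<0\}$, the bounded nonnegative solutions of $p''+cp'-p=0$ are exactly $p(z)=A e^{\lambda_+ z}$ with $A\geq 0$ and $\lambda_+=(-c+\sqrt{c^2+4})/2>0$. The $C^1$ matching at $z=0$ thus constrains $(p(0),p'(0))$ to lie on the ray $L=\{(A,\lambda_+ A):A\geq 0\}$ in the phase plane, so the whole classification reduces to analyzing forward trajectories from $L$. I will use three structural facts throughout: the only equilibria in $\{p\geq 0\}$ are $(0,0)$ and $(1,0)$; $(1,0)$ is a saddle for every $c\geq 0$, while $(0,0)$ is a stable node when $c\geq 2\sqrt{g'(0)}$ and has complex eigenvalues otherwise; and the energy $E(p,q)=q^2/2+G(p)$ with $G'=g$ satisfies $\dot E=-cq^2\leq 0$, a Lyapunov-type inequality forbidding recurrent orbits.

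For the invasion state (i), I would shoot along $L$. Writing $T_A$ for the forward trajectory from $(A,\lambda_+ A)$, partition $(0,+\infty)$ into the open sets
\[ \mathcal S_+ = \{A : T_A\text{ exits }\{0<p<1\}\text{ through }\{p=1,\,q>0\}\}, \qquad \mathcal S_- = \{A : T_A\text{ meets }\{q=0\}\cap\{0<p<1\}\}, \]
and the closed residual set $\mathcal S_0$. Any $A\geq 1$ belongs to $\mathcal S_+$, and $\mathcal S_-$ is nonempty by a small-$A$ analysis of the linearization at $(0,0)$: for every small $A>0$, $T_A$ reaches $\{q=0,\,0<p<1\}$ in finite time, regardless of whether $(0,0)$ is a spiral/center ($c<2\sqrt{g'(0)}$) or a node ($c\geq 2\sqrt{g'(0)}$, where the conclusion follows from an explicit computation in the slow/fast eigenvector decomposition on $L$). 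By connectedness $\mathcal S_0\neq\emptyset$, and for $A\in\mathcal S_0$ the trajectory stays in $\{0<p<1,\,q>0\}$ for all $z\geq 0$; hence $p$ is monotone and bounded, and its $\omega$-limit must be $(1,0)$ by the Lyapunov function together with the equilibrium list. Setting $A^*=\sup\mathcal S_0\in\mathcal S_0\cap(0,1)$ and gluing to $A^* e^{\lambda_+ z}$ on $\{z<0\}$ yields the monotone invasion profile $p_+^c$; maximality among positive bounded stationary solutions is then obtained by a sliding argument, comparing translates of $p_+^c$ with any other candidate solution.

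For the ground states (ii), nonexistence when $c<2\sqrt{g'(0)}$ is immediate: a trajectory approaching $(0,0)$ must eventually rotate around it in any small neighborhood, hence cross $\{p=0\}$ and break positivity. When $c\geq 2\sqrt{g'(0)}$, the trajectories $T_A$ with $A\in\mathcal S_-$ are the ground state candidates: after crossing $\{q=0\}$ they enter $\{0<p<1,\,q<0\}$, and since $E$ restricts $\omega$-limits to $\{(0,0),(1,0)\}$, the stable node structure of $(0,0)$ captures them. This yields a one-parameter family $p_\alpha$ with $p_\alpha(0)=\alpha$; a sandwiching argument between two ground states, together with continuous dependence, makes the admissible set an open interval $(0,\alpha^*_c)$ with $\alpha^*_c\leq p_+^c(0)$, and $p_\alpha<p_+^c$ follows from the maximality of $p_+^c$ combined with the strong maximum principle. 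Monotonicity of $c\mapsto\alpha^*_c$ is then obtained by a phase-plane comparison exploiting that the basin of attraction of $(0,0)$ grows with $c$.

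The main obstacle is the dichotomy (a)/(b), which ties the behavior at $\alpha^*_c$ to the minimal wave speed $c^*$ of the homogeneous equation~\eqref{problemhomo1}. Uniform ODE estimates and compactness produce, as $\alpha\uparrow\alpha^*_c$, a stationary limit profile $p_*$ whose only admissible asymptotic value at $+\infty$ is $0$ or $1$. If $c\geq c^*$, \cite{AW} provides a monotone traveling wave connecting $1$ to $0$, which is precisely the branch of the unstable manifold of the saddle $(1,0)$ entering $\{p<1,\,q<0\}$ and landing on $(0,0)$. For every $\alpha<A^*=p_+^c(0)$ close to $A^*$, $T_\alpha$ shadows the stable manifold of $(1,0)$, spends a long time near the saddle, and follows the unstable heteroclinic into $(0,0)$: so $p_\alpha$ is a ground state for every such $\alpha$, whence $\alpha^*_c=p_+^c(0)$ and $p_\alpha\to p_+^c$ locally uniformly by the same shadowing picture. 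If instead $c<c^*$, no such heteroclinic exists and that unstable branch must exit $\{p>0\}$ at some $q<0$; trajectories $T_\alpha$ with $\alpha$ close to $A^*$ do likewise and cannot be ground states, forcing $\alpha^*_c<A^*$, and the limiting profile $p_*$ then satisfies $p_*(+\infty)=0$, producing the critical ground state $p_{\alpha^*_c}$. Finally, exhaustion — no other positive bounded stationary profile exists — follows because any such solution is a bounded trajectory from $L$, and the Lyapunov function combined with the equilibrium list leaves only the profiles already identified.
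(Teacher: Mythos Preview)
Your overall strategy---gluing the exponential piece on $\{z<0\}$ to phase-plane trajectories on $\{z\geq 0\}$---matches the paper, but the mechanics differ. For the invasion state, you shoot along the ray $L$ and invoke connectedness, whereas the paper goes directly: the one-dimensional stable manifold of the saddle $(1,0)$ in $\{q>0\}$ must cross $L$, and the crossing point is $p_+^c(0)$. For the dichotomy (a)/(b), you argue by shadowing near the saddle and the (non)existence of a heteroclinic, whereas the paper works with an \emph{extremal trajectory}: the unique orbit converging to $(0,0)$ along the fast eigendirection $-\lambda_+(c)$, below which every orbit in $\{q<0<p\}$ must exit through $\{p=0\}$. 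Both viewpoints are legitimate; your Lyapunov function $E=q^2/2+G(p)$ is a nice device the paper does not use explicitly.

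There is, however, a genuine gap in your ground-state step. You write that for $A\in\mathcal S_-$, ``since $E$ restricts $\omega$-limits to $\{(0,0),(1,0)\}$, the stable node structure of $(0,0)$ captures them.'' But the Lyapunov bound only constrains $\omega$-limits of orbits that remain bounded in the full plane; it does \emph{not} prevent the orbit from exiting $\{p>0\}$ through $\{p=0,\,q<0\}$, and when $2\sqrt{g'(0)}\leq c<c^*$ this is exactly what happens for every $\alpha\in(\alpha^*_c,A^*)$. Your subsequent ``sandwiching argument'' does not repair this: you need a barrier separating the orbits that land on $(0,0)$ from those that cross $\{p=0\}$, and nothing in your argument produces one. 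The paper's extremal trajectory is precisely that barrier---traced backward it crosses $\{q=0\}$ at some $(\theta_c,0)$ and then $L$ at height $\alpha^*_c$; orbits starting on $L$ with $\alpha\leq\alpha^*_c$ lie above it in $\{q<0<p\}$ and converge to $(0,0)$, while those with $\alpha^*_c<\alpha<A^*$ lie below it and exit. This simultaneously proves that the admissible set is the interval $(0,\alpha^*_c]$, that $\alpha^*_c<p_+^c(0)$ when $c<c^*$ (since the extremal trajectory crosses $L$ strictly below the stable manifold of $(1,0)$), and that the critical ground state exists. Your shadowing argument for $c\geq c^*$ is correct, but for $c<c^*$ you need either this extremal trajectory or an equivalent separatrix to complete the proof.
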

This theorem already highlights three different situations, depending on the forcing/shifting speed $c \geq 0$. First, if $0 \leq c < 2 \sqrt{g'(0)}$, then there exists a unique positive and bounded stationary solution, which intuitively means that the equation retains its monostable feature. However, as soon as $c \geq 2\sqrt{g'(0)}$, some intermediate stationary solutions emerge in a neighbourhood of the trivial steady state 0, which thus becomes stable with respect to small enough perturbations. This leads to the loss of the hair-trigger effect, and the more complex dynamics stated in our Theorems~\ref{th:regime},~\ref{th:regime_sharp} and~\ref{th:critspeed}. Furthermore, when $c\geq c^*$, these intermediate stationary solutions even form some sort of foliation from 0 to the maximal stationary solution~$p^c_+$, which completely prevents the propagation, at least for compactly supported initial data. One can look at Figure~\ref{phaseportrait3} for an illustration of the different stationary solutions in the phase plane $(p,p')$ depending on the value of $c$.

\subsection{Proof of Theorem~\ref{th:stationary}}\label{sec:stationarysol_sub1}
Note that any stationary solution $p (z)$ of \eqref{problemmonobis} satisfies the second-order ODE
\begin{equation}\label{ODE1}
p'' + cp' + f(z,p)=0.
\end{equation}
This equation is homogeneous on each half interval $\{ z < 0\}$ and $\{z > 0\}$ of the domain, thus we will construct stationary solutions by `glueing' phase portraits as in Berestycki et al \cite{BDNZ}.
\begin{figure}[h]
\centering
\includegraphics[scale=0.29]{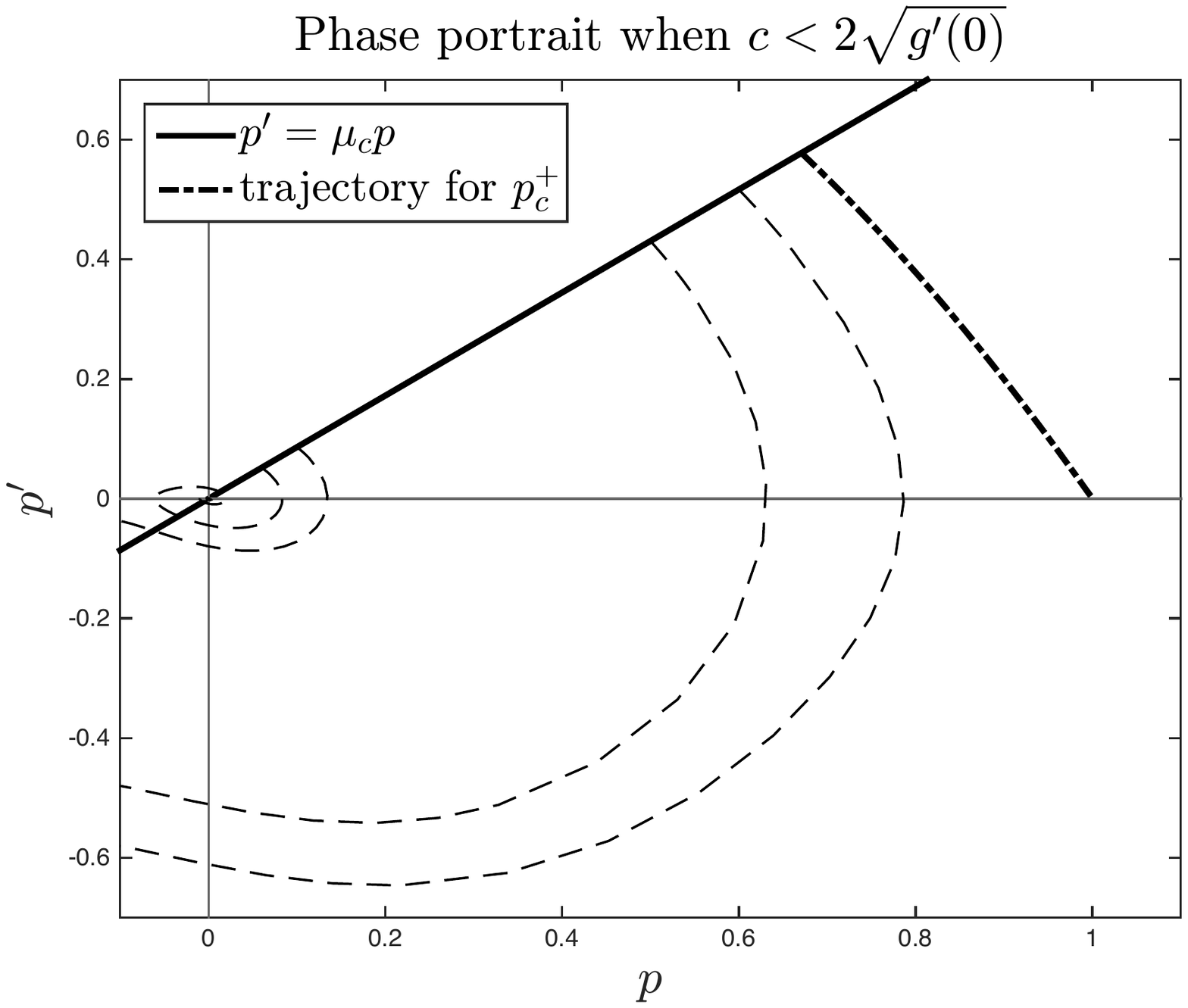}
\includegraphics[scale=0.29]{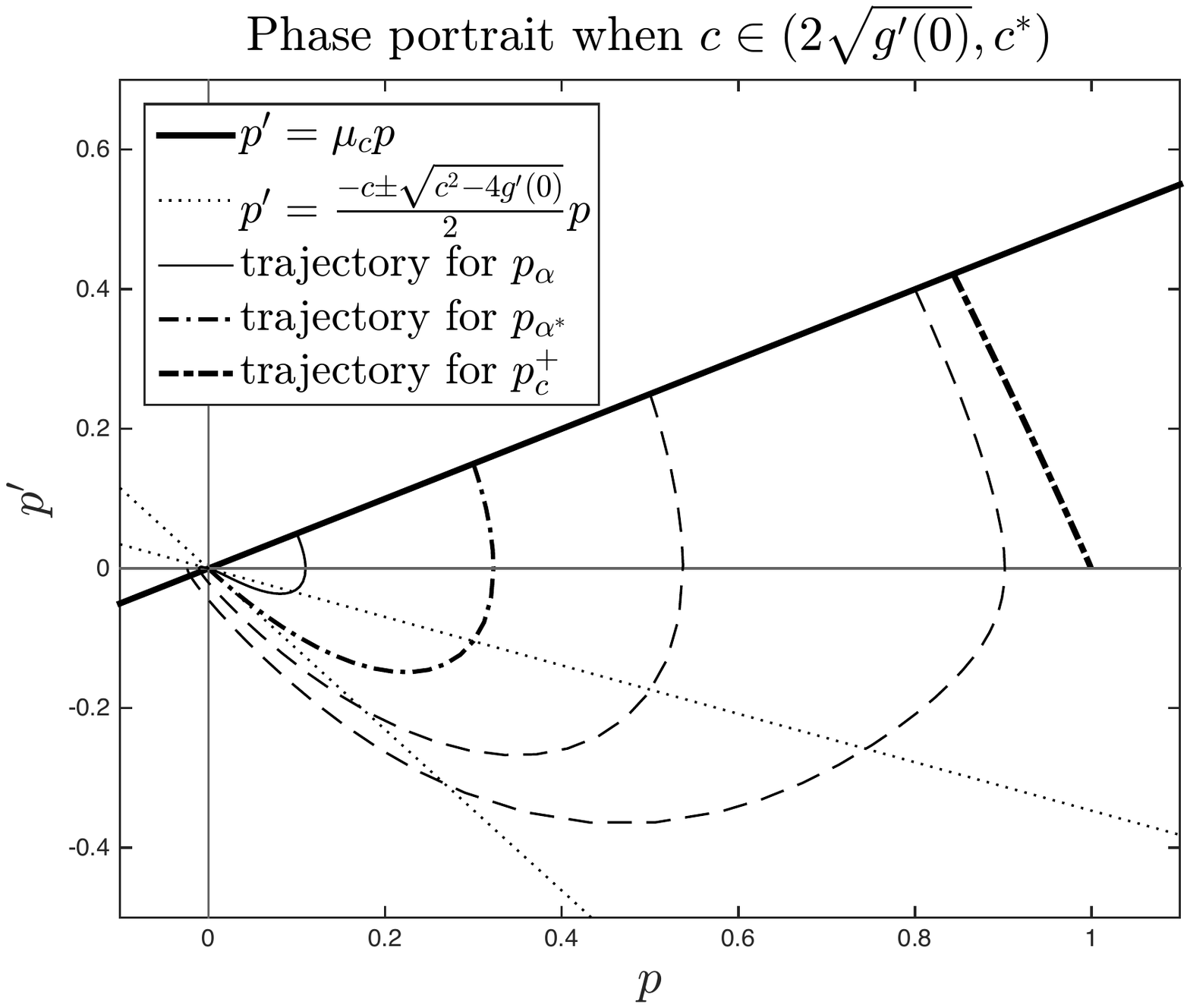}
\includegraphics[scale=0.29]{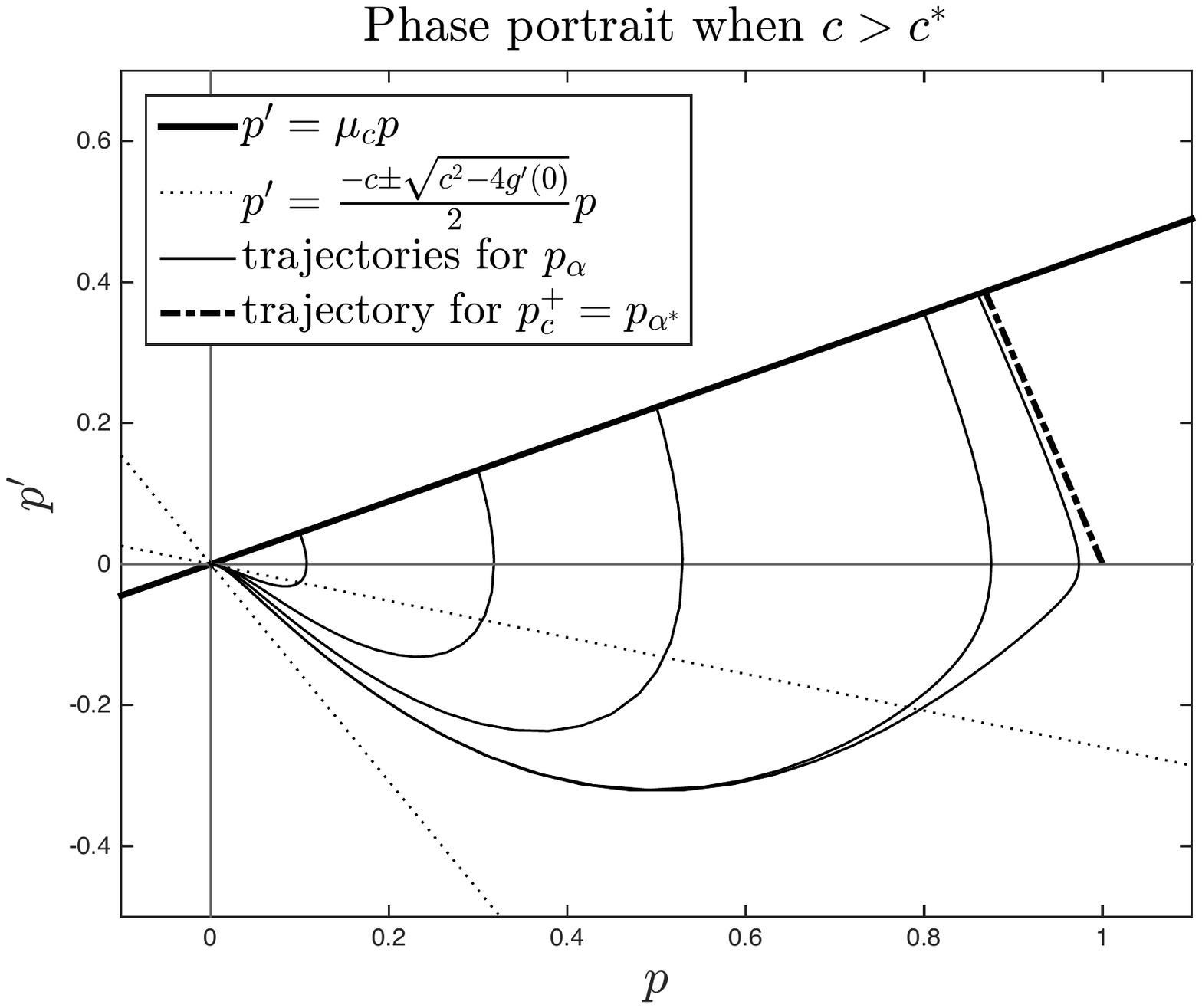}
\caption{Phase portrait of the discontinuous ODE~\eqref{ODE1} for different values of $c$. All the trajectories start on the line $p'=\mu_c p$ before entering the phase plane of the homogeneous equation~\eqref{ODE2}. The trajectories in dashed lines are the ones that cross the $p'$-axis, the one in bold dashed-dotted line is the critical ground state $p_{\alpha^*_c}$ and the one in bold dashed line is the maximal solution $p_+^c$. These simulations were conducted using Matlab and the function $g(s)=4s(1-s)(\sin(s )+0.1)$.}\label{phaseportrait3}
\end{figure}
\begin{proof}[Proof of Theorem \ref{th:stationary}]
For $z \leq 0$, a stationary solution of \eqref{problemmonobis} satisfies
$$p'' + cp' - p =0.$$
As we are only interested in positive and bounded stationary solutions, it immediately follows that
\begin{equation}\label{eq:left}
\forall z \leq 0 \ : \quad p (z)= p (0) \times e^{\mu_c z},
\end{equation}
where $\mu_c$ is defined as
$$\mu_c := \frac{-c + \sqrt{c^2 + 4}}{2} >0.$$
Let us now `glue' this with a solution of the homogeneous monostable equation
\be\label{ODE2}
p'' + cp' + g(p)=0, 
\ee
on $[0,+\infty)$. The identity~\eqref{eq:left} means that we are now looking at trajectories of \eqref{ODE2} starting from some point on the half line $\{ p' = \mu_c p \, | \ p >0  \}$, as illustrated in Figure~\ref{phaseportrait3}. We argue in the phase plane of \eqref{ODE2} and construct in the three steps below all the stationary solutions of \eqref{problemmonobis}. We refer to Aronson and Weinberger~\cite{AW} for more complete and detailed arguments on the phase plane analysis of \eqref{ODE2}.\\

\textit{Step 1: existence of a maximal solution $p^c_+$.} We start by proving the first point in Theorem \ref{th:stationary}, and take any $c\geq0$. It is easy to check that the steady state $(p=1,p'=0)$ is a saddle point. In particular, it admits a one dimensional stable manifold and we can consider the unique trajectory which converges to $(1,0)$ from the upper phase plane $\{ p' > 0\}$. This trajectory clearly crosses the half line $\{ p' = \mu_c p \, | \ p >0  \}$ and, up to some shift, we can assume without loss of generality that the associated solution $p$ satisfies $p'(0)=\mu_c p(0)$ and converges monotonically to $1$ as $z \to +\infty$. Glueing it with the exponential \eqref{eq:left}, we obtain a positive and bounded stationary solution $p_+^c$ as in Theorem~\ref{th:stationary}.

\textit{Step 2: non existence of positive bounded solutions $p$ with $p(0) > p_+^c (0)$.}
Now let $p$ be any stationary solution that satisfy~\eqref{eq:left} with $p(0) =\alpha > p_+^c (0) >0$, and consider the trajectory in the phase plane of \eqref{ODE2} starting from $(\alpha, \mu_c \alpha )$. Clearly it lies above the trajectory of $p_+^c$ and, therefore, it crosses the vertical line $\{ p = 1\}$ above $(1,0)$. Then, as $g(s) < 0$ for all $s > 1$, it is straightforward that the trajectory goes to infinity. In other words, any stationary solution of~\eqref{ODE1} satisfying $p(0) > p_+^c (0)$ is unbounded. So there does not exist any bounded and nonnegative solution of the stationary problem \eqref{ODE1} that lies above $p_+^c$. 

\textit{Step 3: existence of non trivial intermediate solutions (ground states) if and only if $c \geq 2 \sqrt{g'(0)}$.} Next let $p$ be any stationary solution satisfying~\eqref{eq:left} with $p(0) = \alpha \in (0, p_+^c (0))$. Let us first note that the trajectory of $p$ in the phase plane of \eqref{ODE2}, which starts from $(\alpha,\mu_c \alpha)$, crosses the horizontal axis $\{ p' =0\}$ and enters the $\{p' < 0  \mbox{ and } 0 < p < 1\}$ part of the phase plane. As $g(s) >0$ for all $0< s < 1$, clearly it cannot cross back the horizontal axis. Thus the trajectory may only leave the set $\{p' < 0  \mbox{ and } 0 < p < 1\}$ by crossing the vertical axis $\{ p=0\}$ below the origin and, if it does not, then it converges to the equilibrium $(0,0)$. 

We now divide the proof into three parts depending on the value of $c$. If $c < 2 \sqrt{g'(0)}$, it is rather straightforward that the linearized operator around the steady state $(p=0,p'=0)$ only admits two complex eigenvalues. Thus, any stationary solution~$p$ which is nonnegative for $z \leq 0$ and satisfies $0< p(0) < p_+^c (0)$ changes sign in $\R_+$. Putting this together with the step~2 above, we are now able to conclude that there exists no positive and bounded stationary solution other than $p_+^c$.

Next, we assume that $c \geq c^*$. Let us recall that $c^* >0$ is the smallest $c \in \R$ such that there exists a trajectory in the phase plane, connecting the two steady states $(p=1,p'=0)$ and $(p=0,p'=0)$, whose associated solution of \eqref{ODE2} is exactly the travelling wave $V_c$ with speed~$c$~\cite{AW}. In particular, if $c \geq c^*$, then clearly the trajectory starting from $(\alpha, \mu_c \alpha)$ with $\alpha \in (0,p_+^c (0))$ must lie above that of $V_c$, hence does not cross the vertical axis and converges to~0 as $z \to +\infty$. For each $0 < p(0) < p_+^c (0)$ we obtain a (unique) positive and bounded stationary solution, as announced in Theorem~\ref{th:stationary}$(b)$.

It now only remains to consider the case $2 \sqrt{g'(0)} \leq c < c^*$. In this case, the steady state $(0,0)$ is a node, namely trajectories locally converge to $(0,0)$ along the lines of slope $- \frac{c \pm \sqrt{c^2 - 4 g'(0)}}{2}$. Note that, in the critical case $c = 2 \sqrt{g'(0)}$, this crucially relies on the $C^{1,r}$ regularity of $g$. Furthermore, it can be shown that there exists a trajectory converging to $(0,0)$ from the $\{p' < 0 < p\}$ part of the phase plane, which is extremal in the sense that any trajectory lying below it must cross the vertical axis $\{ p =0 \}$ before converging to $(0,0)$. We temporarily denote by $p^*$ the solution of \eqref{ODE2} associated with this extremal trajectory ($p^*$ is defined up to any shift). Following backward this extremal trajectory, it is clear that it must leave the $\{p' < 0 \mbox{ and } 0 < p < 1\}$ part of the phase plane, either through the horizontal axis between $(0,0)$ and $(1,0)$, or through the vertical line $\{p=1\}$. The latter contradicts the fact that, since $c < c^*$, the trajectory originating from the unstable manifold of $(1,0)$ may not converge to $(0,0)$ without crossing the vertical axis. 
Moving further back on the trajectory, it becomes clear that it also intersects the half line $\{ p' = \mu_c p \, | \  p >0 \}$ below the point $(p_+^c (0), p_+^c  \, ' (0))$. Denoting by $\alpha^*_c \in (0,p_+^c (0))$ the horizontal coordinate of this intersection, it is now straightforward that, for any $ \alpha \leq \alpha^*_c$, the stationary solution $p_\alpha$ of \eqref{ODE1} satisfying $p(0)=\alpha$ remains positive and converges to~0 as $z \to +\infty$, while for $\alpha^*_c < \alpha < p_+^c (0)$ it changes sign. Glueing all the trajectories which do not cross the vertical axis with the exponentials~\eqref{eq:left}, we obtain all the stationary solutions described in Theorem~\ref{th:stationary}$(a)$, and by construction there exists no other positive and bounded stationary solution of \eqref{problemmonobis}.\\

We are now in a position to conclude the proof of Theorem~\ref{th:stationary}. We first show that $c\mapsto \alpha^*_c$ is nondecreasing, and consider two different speeds $2\sqrt{g'(0)} \leq c_1<c_2$. Choose then any $\alpha\in (0,\alpha^*_{c_1})$. From a phase plane analysis and using the fact that $c\mapsto\mu_c$ is decreasing, one can prove that the trajectory of \eqref{ODE2} with $c=c_1$ and starting at $(\alpha^*_{c_1},\mu_{c_1}\alpha^*_{c_1})$ lies above (respectively below) the trajectory of \eqref{ODE2} with $c=c_2$ and starting at $(\alpha,\mu_{c_2}\alpha)$ in the $\{p>0,\:p'>0\}$ part of the phase plane (respectively the $\{p'<0<p\}$ part of the phase plane). It is then straighforward that the trajectory of \eqref{ODE2} with $c=c_2$ starting at $(\alpha,\mu_c \alpha)$ converges to $(0,0)$ without crossing the vertical axis. It follows that $\alpha^*_{c_2} \geq \alpha$ and, as $\alpha$ could be chosen arbitrarily close to $\alpha^*_{c_1}$, we conclude that $\alpha^*_{c_2} \geq \alpha^*_{c_1}$.

It only remains to check that the intermediate stationary solutions $p_\alpha$ constructed in the step 3 above satisfy the inequality $p_\alpha < p_+^c$ on the whole line. For $z \leq 0$, it immediately follows from the fact that all stationary solutions are exponential, see \eqref{eq:left}. Moreover, it is straightforward from our construction in the phase plane (see also Figure~\ref{phaseportrait3}) that there does not exist a $z >0$ such that $p_\alpha (z) = p_+^c(z)$ and $p'_\alpha  (z) \geq p_+^{c} \, ' (z)$, which immediately implies that $p_\alpha < p_+^c$ for~$z \geq 0$. In particular, if $c \geq c^*$ and $\alpha \to p_+^c (0)$, then $p_\alpha$ converges locally uniformly to a stationary solution $p_\infty \leq p_+^c$ such that $p_\infty (0) = p_+^c (0)$, hence $p_\infty \equiv p_+^c$ by the strong maximum principle. Note, however, that this convergence could also be deducted directly from the phase plane. This ends the proof of Theorem~\ref{th:stationary}.
\end{proof}

\begin{Rk}
We mention here that, in~\cite{AW}, the minimal wave speed $c^*$ is obtained as the smallest $c$ such that the extremal trajectory defined above crosses the vertical line $\{p =1 \}$ below $(1,0)$. In particular, even though we treated above the cases $2 \sqrt{g'(0)} \leq c < c^*$ and $c \geq c^*$ separately, one may see that both arguments actually follow from the same idea.
\end{Rk}

\subsection{Exponential decay of ground states}\label{sec:expdecay}

Theorem~\ref{th:stationary} states that, when $c \geq 2 \sqrt{g'(0)}$, there exist infinitely many ground states between 0 and~$p_+^c$. In order to understand the dynamics of the time evolution problem, one may want further insight on these intermediate steady states and, for instance, may wonder whether ground states are ordered or not. It follows from Theorem~\ref{th:stationary} and Proposition~\ref{prop:exp_decay} below that they cannot since the critical ground state (when it exists) satisfies both $p_{\alpha^*_c} (0) > p_\alpha (0)$ and $p_{\alpha^*_c} (z) < p_\alpha (z)$ for all $z$ large enough, for any $\alpha \in (0,\alpha^*_c)$. 

Furthermore, it turns out that the way ground states decay as $z \to +\infty$ plays an essential role in determining whether they may appear in the large-time behaviour of solutions under our choice of compactly supported initial data.
\begin{Prop}\label{prop:exp_decay}
Assume that $c \geq 2\sqrt{g'(0)}$ and define
$$\lambda_\pm (c) := \frac{c \pm \sqrt{c^2 - 4 g'(0)}}{2}.$$
\begin{enumerate}[(i)]
\item If $c > 2 \sqrt{g'(0)}$, then for any $0< \alpha < \alpha^*_c$, the ground state $p_\alpha$ satisfies
$$ p_\alpha (z) = A e^{-\lambda_- (c) z} (1 + O (e^{-\delta z}))  \mbox{ as } z \to +\infty , $$
while (provided that $c < c^*$), the ground state $p_{\alpha^*_c}$ satisfies
$$p_{\alpha^*_c}  (z) = A e^{-\lambda_+ (c) z} (1 + O (e^{-\delta z}) ) \mbox{ as } z \to +\infty ,$$
where in both cases $A$ and $\delta$ are positive constants.

\item For $c = 2 \sqrt{g'(0)}$ and any $0< \alpha \leq \alpha^*_c$, there exists $\delta >0$, $A \geq 0$ and $B\in \R$ such that
$$p_\alpha (z) = (Az + B) e^{-\frac{\sqrt{g'(0)}}{2} z} + O(e^{-(\frac{\sqrt{g'(0)}}{2}+\delta ) z} );$$
more precisely, $A >0$ if $\alpha< \alpha^*_c$, while $B> A=0$ if $\alpha = \alpha^*_c$.
\end{enumerate}
\end{Prop}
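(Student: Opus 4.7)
The plan is to reduce everything to the linearization of \eqref{ODE2} at $p=0$ and read off the decay rates from its characteristic exponents. Since $g \in C^{1,r}_{\text{loc}}$, I write $g(p) = g'(0)\, p + h(p)$ with $|h(p)| \leq C |p|^{1+r}$ for small $p$. By Theorem~\ref{th:stationary} any ground state $p_\alpha$ is positive and tends monotonically to $0$ as $z \to +\infty$, so the nonlinearity $h(p_\alpha(\cdot))$ can be treated as a small source term in the constant coefficient ODE $q'' + c q' + g'(0)q = 0$.

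For part (i), when $c > 2 \sqrt{g'(0)}$, the characteristic equation $\lambda^2 - c\lambda + g'(0)=0$ has two distinct positive roots $\lambda_\pm(c)$, so the linear homogeneous equation has fundamental solutions $e^{-\lambda_\pm(c) z}$. I apply variation of parameters and write
\begin{equation*}
p_\alpha(z) = A_-(z)\, e^{-\lambda_-(c) z} + A_+(z)\, e^{-\lambda_+(c) z},
\end{equation*}
which, combined with the decay of $(p_\alpha, p_\alpha')$ and an integration from $z$ to $+\infty$ against the appropriate exponential weights, yields
\begin{equation*}
A_\pm(z) = A_\pm^\infty \mp \frac{1}{\lambda_+(c)-\lambda_-(c)} \int_z^{+\infty} e^{\lambda_\pm(c) s} h(p_\alpha(s))\, ds,
\end{equation*}
with $A_\pm^\infty \in \R$. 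A first crude bound $p_\alpha(z) = O(e^{-\lambda_-(c) z})$ (coming from the fact that the trajectory lies on the two-dimensional stable manifold) shows that the integrals on the right are absolutely convergent and are themselves $O(e^{-\delta z})$ for some $\delta>0$, after a bootstrap using $|h(p_\alpha(s))| \leq C p_\alpha(s)^{1+r}$. Two asymptotic regimes arise according to whether $A_-^\infty \neq 0$ (generic ground state, slow decay $e^{-\lambda_-(c) z}$) or $A_-^\infty = 0$ (critical ground state, fast decay $e^{-\lambda_+(c) z}$). The identification of $p_{\alpha^*_c}$ with the vanishing of $A_-^\infty$ comes from the phase-plane construction in the proof of Theorem~\ref{th:stationary}: the critical ground state corresponds precisely to the \emph{extremal} trajectory converging to $(0,0)$, namely the one-dimensional strong stable manifold, which by the stable manifold theorem is tangent at the origin to the eigendirection of $-\lambda_+(c)$ and therefore has no $e^{-\lambda_-(c) z}$ component.

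For part (ii), when $c = 2 \sqrt{g'(0)}$, the characteristic equation has a double root and the linear fundamental solutions become $e^{-\lambda z}$ and $z\, e^{-\lambda z}$ (with $\lambda$ as in the statement). The same variation-of-parameters method produces
\begin{equation*}
p_\alpha(z) = (A z + B)\, e^{-\lambda z} + R(z),
\end{equation*}
where $R(z)$ is expressed as an integral of $s \mapsto h(p_\alpha(s))$ against a polynomial-in-$s$ kernel times $e^{\lambda s}$. A bootstrap using the $C^{1,r}$ bound on $h$ turns the preliminary estimate $p_\alpha(s) = O((1+s) e^{-\lambda s})$ into $|h(p_\alpha(s))| = O((1+s)^{1+r} e^{-(1+r)\lambda s})$, hence $R(z) = O(e^{-(\lambda+\delta) z})$. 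The trichotomy between $(A>0,\, B \in \R)$ and $(A=0,\, B>0)$ is decided once again by the phase-plane picture: the critical ground state $p_{\alpha^*_c}$ sits on the extremal (strong stable) trajectory, which in the degenerate-node case is the unique trajectory with $A=0$, while every other ground state picks up the slower $z\, e^{-\lambda z}$ contribution, giving $A>0$.

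The main obstacle is the bootstrap step that sharpens the nonlinear remainder to $O(e^{-\delta z})$ past the leading order: one has to start from the rough bound $p_\alpha \to 0$, plug it into the Duhamel integrals to get a first exponential rate, and then reinject this into $|h(p_\alpha(s))| \leq C p_\alpha(s)^{1+r}$ to gain the extra factor $e^{-r \lambda z}$. This is precisely where the Hölder exponent $r>0$ is used in a quantitative way; in particular, in the critical case $c = 2\sqrt{g'(0)}$, the $C^{1,r}$ regularity is what guarantees that the strong stable manifold is well defined and one-dimensional, in agreement with the remark in the paper about the role of this hypothesis.
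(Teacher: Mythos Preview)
Your argument is correct and follows essentially the same route as the paper: both reduce to the linearization of \eqref{ODE2} at the origin, identify the critical ground state $p_{\alpha^*_c}$ with the extremal (strong stable) trajectory from the construction in Theorem~\ref{th:stationary}, and invoke the $C^{1,r}$ regularity of $g$ to control the nonlinear remainder. The only difference is cosmetic: where the paper cites Coddington--Levinson for the precise asymptotics near a node, you spell out the underlying variation-of-parameters/bootstrap computation explicitly, which is exactly what that reference contains.
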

\begin{figure}
\centering
\includegraphics[scale=0.4]{PhasePortrait0node.pdf}\quad
\includegraphics[scale=0.4]{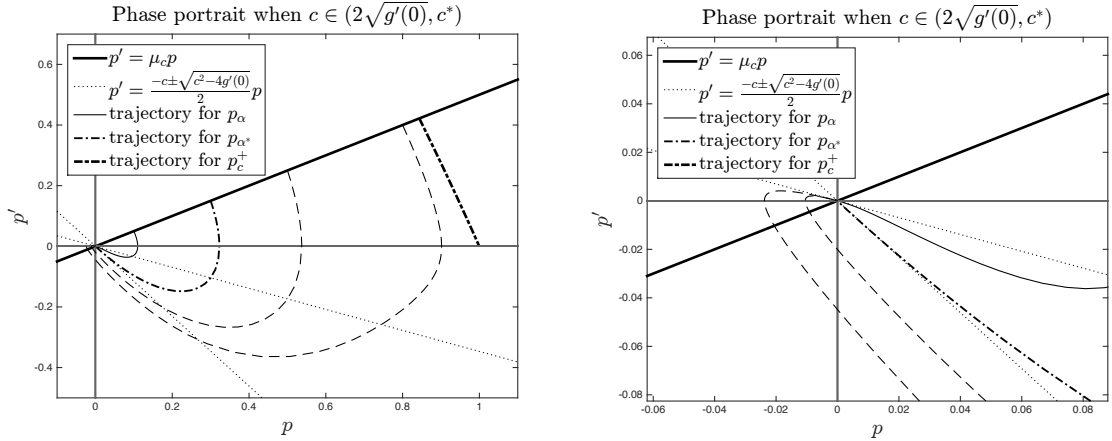}
\caption{Phase portrait of the steady states of problem \eqref{problemmonobis} in the moving frame when $2\sqrt{g'(0)}<c<c^*$. The figure on the right is a zoom of the left figure. One can notice, as stated in Proposition \ref{prop:exp_decay}, that for all $\alpha<\alpha^*$, the stationary solutions $p_\alpha$ (defined in Theorem \ref{th:stationary}) converge to 0 asymptotically to the line $p'=-\lambda_- p$, whereas $p_{\alpha^*}$ converges to $0$ asymptotically to the line $p'=-\lambda_+ p$.}\label{phaseportrait0node}
\end{figure}
\begin{figure}
\centering
\includegraphics[scale=0.4]{PhasePortrait0nodelargec.pdf}\quad
\includegraphics[scale=0.4]{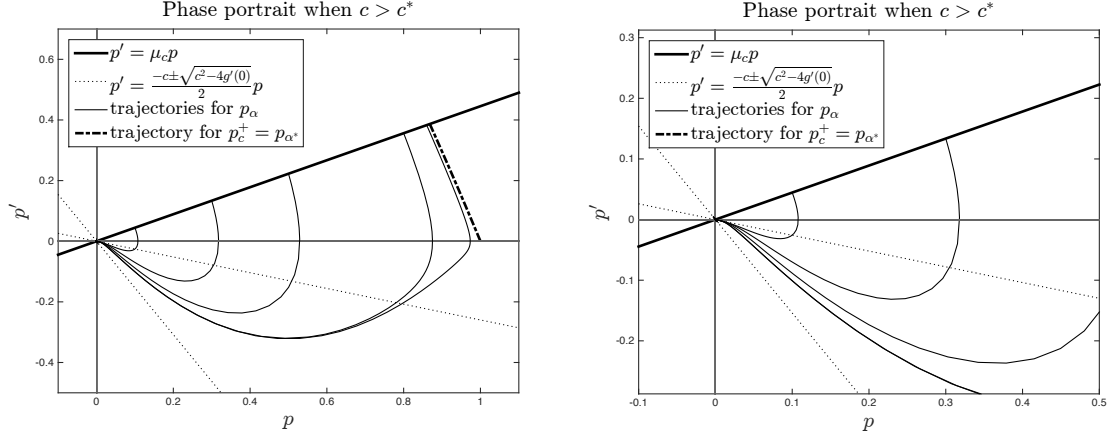}
\caption{Phase portrait of the steady states of problem \eqref{problemmonobis} in the moving frame when $c>c^*$. The figure on the right is a zoom of the left figure. Here the stationary solutions $p_\alpha$ all converge to 0 asymptotically to the line $p'=-\lambda_- p$.}\label{phaseportrait0nodelargec}
\end{figure}
\begin{proof}[Proof of Proposition~\ref{prop:exp_decay}]
We briefly sketch the proof, which follows from the construction of ground states above, and from standard phase plane analysis of the homogeneous monostable equation~\eqref{ODE2}. We also refer to the Figures~\ref{phaseportrait0node} and~\ref{phaseportrait0nodelargec} for an illustration of the argument.

When $c > 2 \sqrt{g'(0)}$, the linearized problem around the steady state $(0,0)$ admits two eigenvectors $(1,-\lambda_\pm (c))$. Noting that $|\lambda_- (c)| < |\lambda_+ (c)|$, it is well-known (see for instance~\cite{cl}), that all trajectories converging to $(0,0)$ do so along the line $\{p' = -\lambda_- (c) p \}$, except for a single trajectory which goes through $(0,0)$ along the line $\{p' = - \lambda_+ (c) p\}$. In particular, this latter trajectory lies below all the others which converge to $(0,0)$ from the $\{p' < 0 < p\}$ part of the phase plane without crossing the vertical axis. It is also what we refered to earlier as the extremal trajectory which, by construction (see the proof of Theorem~\ref{th:stationary}), coincides with the trajectory of the critical ground state $p_{\alpha^*_c} (z)$ when $c < c^*$. Note that, if $c \geq c^*$, the extremal trajectory lies below the trajectory of the travelling wave with speed $c$ (with which it actualy coincides if $c=c^* > 2 \sqrt{g'(0)}$). Therefore, it does not cross the horizontal axis between $(0,0)$ and $(1,0)$ and thus does not coincide with any positive and bounded stationary solution of \eqref{ODE1}. 

It follows from the discussion above that, if $c > 2 \sqrt{g'(0)}$, then $p_{\alpha^*_c}$ converges to $0$ as $z \to +\infty$ with the exponential rate~$-\lambda_+$, while any other ground state $p_{\alpha}$ converges to 0 with the exponential rate~$-\lambda_-$. The more accurate asymptotics stated in Proposition~\ref{prop:exp_decay} are a classical consequence of the $C^{1,r}_{loc}$-regularity of~$g$~\cite{cl}.

The critical case $c=2\sqrt{g'(0)}$ follows from a similar argument. While the linearized problem around~$(0,0)$ admits a unique eigenvector and, therefore, all trajectories converge to $(0,0)$ along the same line $\{ p' = - \frac{\sqrt{g'(0)}}{2} p\}$, it is still known that trajectories behave in a similar fashion as in the linear problem thanks to the $C^{1,r}_{loc}$-regularity of $g$. More precisely, there exists an extremal trajectory which decays to 0 with the asymptotics $B e^{-\frac{\sqrt{g'(0)}}{2}}$ with $B>0$, while all the other trajectories decay to 0 with the slightly slower asymptotics $(Az+B) e^{-\frac{\sqrt{g'(0)}}{2}}$ where $A >0$ and $B \in \R$. The end of the proof of Proposition~\ref{prop:exp_decay} is again a simple consequence of the construction of $p_{\alpha^*_c}$ as the unique stationary solution of \eqref{problemmonobis} associated with the extremal trajectory.
\end{proof}

\section{Large-time behaviour: spreading and extinction}\label{sec:regime}

In this section we prove most of Theorem \ref{th:regime} about the asymptotic behaviour of the solution $u(t,z)$ of problem \eqref{problemmonobis}, and in particular we show how the hair-trigger effect disappears for intermediate speeds. The results presented in this section mostly rely on direct comparison methods. However, we will leave partly open the critical case $c = c^*$, which will be dealt with later on (see Theorem~\ref{th:nospread} and Proposition~\ref{prop:conv}).\\

Before starting the proof, we state some lemma which will be used extensively in the following sections:
\begin{Lemma}\label{lem:sup_cv}
Let $u_0$ be any nonnegative, bounded and compactly supported initial datum, and $u(t,z)$ be the associated solution of \eqref{problemmonobis}. Then
$$\lim_{t\to +\infty} \ \sup_{z \in \R} \left(  u (t,z) - p_+^c (z) \right) = 0.$$
\end{Lemma}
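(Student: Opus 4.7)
The plan is to dominate $u$ by the solution of \eqref{problemmonobis} started from a spatially constant supersolution, observe that this dominating solution is non-increasing in time, identify its monotone limit as $p_+^c$ via Theorem~\ref{th:stationary}, and finally upgrade pointwise convergence to uniform convergence using two explicit tail supersolutions.

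Set $M := \max(\|u_0\|_\infty, 1)$. Since $f(z,M) = -M < 0$ for $z < 0$ and $f(z,M) = g(M) \leq 0$ for $z \geq 0$, the constant $M$ is a stationary supersolution of~\eqref{problemmonobis}. Denote by $\bar u$ the solution of~\eqref{problemmonobis} with initial datum $\bar u_0 \equiv M$. The parabolic comparison principle gives $u(t,z) \leq \bar u(t,z)$, and since $M \geq 1 \geq p_+^c$ also $\bar u(t,z) \geq p_+^c(z)$. Because $M$ is a stationary supersolution, $\bar u(h,\cdot) \leq M = \bar u(0,\cdot)$ for every $h > 0$, and comparison applied to the autonomous-in-$t$ equation yields $\bar u(t+h,\cdot) \leq \bar u(t,\cdot)$, so $t \mapsto \bar u(t,z)$ is non-increasing. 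Thus $\bar u(t,\cdot) \downarrow \bar u_\infty \geq p_+^c > 0$ pointwise, and by standard parabolic regularity $\bar u_\infty$ is a positive bounded stationary solution of~\eqref{problemmonobis}. But Theorem~\ref{th:stationary}(i) asserts that $p_+^c$ is the \emph{maximal} such solution, so $\bar u_\infty \leq p_+^c$, and hence $\bar u_\infty = p_+^c$.

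It remains to upgrade $\bar u(t,\cdot) \downarrow p_+^c$ to uniform convergence in $z \in \R$, since $u \leq \bar u$ then immediately gives $\sup_z(u - p_+^c) \leq \sup_z(\bar u - p_+^c) \to 0$ (and $\sup_z(u-p_+^c) \geq \sup_z(-p_+^c) = 0$ since $u \geq 0$ and $p_+^c(-\infty) = 0$). Dini's theorem handles any compact interval $[-K,K]$, so the main task is to control the two tails. For the right tail I would compare $\bar u$ with the spatially homogeneous supersolution $V(t) := 1 + (M-1)e^{-\beta t}$, where $\beta \in (0,1)$ is chosen small enough so that $g(1+s) + \beta s \leq 0$ on $[0, M-1]$ (possible since $-g(1+s)/s$ extends continuously at $s=0$ to $-g'(1)>0$); this yields $\bar u(t,z) \leq V(t) \to 1$ uniformly in $z$, which combined with $p_+^c(+\infty)=1$ controls $\bar u - p_+^c$ on $\{z \geq K\}$. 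For the left tail I would use $W(t,z) := M e^{\mu_c z} + M e^{-t}$, which (thanks to $\mu_c^2 + c\mu_c = 1$) solves the linear equation $\partial_t W - \partial_{zz}W - c\partial_z W = -W$ exactly on $\{z<0\}$ and dominates $\bar u$ both at $t=0$ and at the boundary $z=0$; comparison on $\{z \leq 0\}$ then gives $\bar u(t,z) \leq M e^{\mu_c z} + M e^{-t}$, which tends to $0$ uniformly (matching $p_+^c(-\infty)=0$) for $t$ large and $z \leq -K$ with $K$ large.

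The principal obstacle is precisely this uniform tail control, since Dini only treats compact intervals. The asymmetry between the dissipative region $\{z<0\}$ and the monostable region $\{z\geq 0\}$ forces two distinct explicit supersolutions: one exploiting the stability of the state $u=1$ under $g$ on the right, and one exploiting the exponential spatial decay enforced by the linear reaction $-u$ on the left.
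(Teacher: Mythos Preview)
Your proof is correct and follows essentially the same strategy as the paper: dominate $u$ by the solution started from a constant $M\geq 1$, use monotonicity in time together with the classification Theorem~\ref{th:stationary} to identify the limit as $p_+^c$, and control the two tails by explicit supersolutions (the paper uses the ODE solution of $\dot v=g(v)$ on the right and the pure exponential $A e^{\mu_c z}$ on the left, while you use the slightly more explicit $1+(M-1)e^{-\beta t}$ and $Me^{\mu_c z}+Me^{-t}$, which is a cosmetic difference). The only other distinction is that the paper applies the tail bounds directly to $u$ rather than to $\bar u$, but both routes lead to the same conclusion.
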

\begin{proof}
Note that any constant $M>1$ is a supersolution of \eqref{problemmonobis}. Thus, the solution $\overline{u}_M$ of \eqref{problemmonobis} with initial datum $M$ is decreasing in time and, by standard parabolic estimates, it converges locally uniformly to a stationary solution. As $p_+^c \leq M$ is the largest positive and bounded stationary solution, it easily follows that the solution actually converges to $p_+^c$. Choosing $M$ large enough so that $u_0 (z) \leq M$ for all $z \in \R$ and by the parabolic comparison principle, we can already conclude that
\begin{equation}\label{newlem_loc}
\forall  Z>0, \quad \limsup_{t \to +\infty} \ \sup_{|z|\leq Z} (u(t,z)-p_+^c (z))  \leq 0.
\end{equation}
Let us now improve the above estimate. Recall that
$$\mu_c = \frac{-c + \sqrt{c^2 +4}}{2} >0$$
is such that the exponential $A e^{\mu_c z}$ satisfies \eqref{problemmonobis} for all $z \leq 0$ and any $A>0$. One can then choose $A>0$ such that $u_0 (z) \leq Ae^{\mu_c z}$ for all $z \leq 0$, and such that $u (t,z=0) \leq  A$ for all $t \geq 0$ thanks to the fact that $u(t,z=0) \leq \overline{u}_M (t,z=0) \leq M$ for all $t >0$. By the parabolic maximum principle, we can infer that 
\begin{equation}\label{newlem_1}
u(t,z) \leq A e^{\mu_c z}
\end{equation}
for all $t \geq 0$ and $z \leq 0$.

On the other hand, let us now denote by $\overline{v}_M$ the solution of the ordinary differential equation $\overline{v}_M ' (t) = g(\overline{v}_M)$ with $\overline{v}_M (0) = M.$ Clearly it converges to 1 as $t \to +\infty$ and, since it is a supersolution of \eqref{problemmonobis}, we get that
\begin{equation}\label{newlem_2}
\limsup_{t\to +\infty} \ \sup_{z \in \R} u (t,z) \leq 1.
\end{equation}
We are now in a position to conlude the proof. Choose any $\varepsilon >0$. Then let $Z>0$ be large enough such that 
\begin{equation*}
\left\{
\begin{array}{l}
\displaystyle Ae^{-\mu_c Z} \leq \varepsilon,\vspace{3pt}\\ 
\displaystyle  p_+^c (z) \geq 1 - \varepsilon \mbox{ for all } z \geq Z.
\end{array}
\right.
\end{equation*}
This implies, together with~\eqref{newlem_1} and \eqref{newlem_2}, that 
$$\limsup_{t \to +\infty} \sup_{|z| \geq Z} (u(t,z)-p_+^c (z))  \leq \varepsilon .$$
Putting this together with~\eqref{newlem_loc}, we conclude that
$$\limsup_{t \to +\infty } \ \sup_{z \in \R} (u(t,z)-p_+^c (z)) \leq \varepsilon .$$
Recalling that $\varepsilon$ can be chosen arbitrary small, and noting that $\liminf_{z \to -\infty} u(t,z) - p_+^c (z) \geq 0$ for all $t>0$, the lemma is proved.
\end{proof}

\subsection{In the case $0 \leq c<2\sqrt{g'(0)}$}

We want to prove that the solution $u$ converges in the moving frame to the maximal positive stationary solution $p_+^c$ of \eqref{problemmonobis}, namely statement $(i)$ of Theorem \ref{th:regime}. In particular, this means that the population manages to survive, and even to expand through the favorable zone. In other words, when the shifting speed of the favorable area is slow enough, the hair-trigger effect is still valid. Statement $(i)$ of Theorem \ref{th:regime} follows from the next theorem.
\begin{Th}\label{th:spread_slow}
Assume that $0\leq c < 2\sqrt{g'(0)}$ and $u_0$ is a non trivial, nonnegative and bounded initial datum.

Then the solution $u (t,z)$ of \eqref{problemmonobis} with initial datum $u_0$ converges locally uniformly to $p_+^c$ as $t$ goes to infinity.
\end{Th}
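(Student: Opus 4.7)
The plan is to sandwich the solution between an upper bound tending to $p_+^c$ and a nontrivial positive lower bound, then invoke Theorem~\ref{th:stationary}, which for $c<2\sqrt{g'(0)}$ asserts that $p_+^c$ is the only positive bounded stationary solution of~\eqref{problemmonobis}.

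For the upper bound, I would use that $M:=\max(\|u_0\|_\infty,1)$ is a stationary supersolution, so the solution $\overline{u}_M$ of~\eqref{problemmonobis} starting from~$M$ is nonincreasing in time and converges, by standard parabolic estimates, locally uniformly to a stationary solution that dominates $p_+^c$; by maximality (Theorem~\ref{th:stationary}) this limit must be $p_+^c$ itself. Comparison then yields $\limsup_{t\to+\infty}u(t,z)\leq p_+^c(z)$ locally uniformly (this also recovers Lemma~\ref{lem:sup_cv} when $u_0$ is compactly supported).

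For the lower bound, I would exploit that $c<2\sqrt{g'(0)}$ makes the trivial state unstable in the favourable half-line. The substitution $\phi=e^{-cz/2}\psi$ shows that the principal Dirichlet eigenvalue of $-\partial_{zz}-c\partial_z$ on $(0,L)$ equals $\lambda_1(L)=c^2/4+\pi^2/L^2$, which is strictly less than $g'(0)$ once $L$ is large enough. Since $g(s)=g'(0)s+o(s)$ near $0$, a small positive multiple $\epsilon\phi_L$ of the Dirichlet eigenfunction is then a stationary subsolution of~\eqref{problemmonobis} on $(0,L)$, and the associated nonlinear Dirichlet problem admits a maximal positive steady state $p_{D,L}$. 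The strong maximum principle gives $u(t_0,\cdot)>0$ on $\R$ for every $t_0>0$; choosing $\epsilon$ small so that $u(t_0,\cdot)\geq\epsilon\phi_L$ on $[0,L]$, parabolic comparison in the strip $(0,L)$ (using $u(t,0),u(t,L)\geq 0$) yields
\begin{equation*}
\liminf_{t\to+\infty}u(t,z)\geq p_{D,L}(z)\quad\text{on } [0,L].
\end{equation*}
In particular, no element of the $\omega$-limit of $u(t,\cdot)$ in $C_{\mathrm{loc}}(\R)$ can vanish at an interior point of~$(0,L)$.

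To conclude, I would extract from any sequence $t_n\to+\infty$ a subsequence along which $(s,z)\mapsto u(t_n+s,z)$ converges in $C^{1,2}_{\mathrm{loc}}$ to an entire solution $\tilde u$ of~\eqref{problemmonobis} with $0<\tilde u\leq p_+^c$. Invoking the convergence-to-stationary-solutions result proved in Section~\ref{sec:convstatsol} (based on the zero-number argument recalled in the Appendix), $\tilde u$ must actually be time-independent, hence a positive bounded stationary solution. Theorem~\ref{th:stationary} then forces $\tilde u\equiv p_+^c$, since $c<2\sqrt{g'(0)}$ precludes all ground states. As every subsequential limit is $p_+^c$, we obtain $u(t,\cdot)\to p_+^c$ locally uniformly. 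The main obstacle is precisely this last step: without a KPP-type hypothesis, $\sigma p_+^c$ need not be a subsolution for $\sigma<1$, so naive monotone iteration from the Dirichlet lower bound up to $p_+^c$ fails, and one must rely on the genuinely nontrivial convergence-to-stationary argument of Section~\ref{sec:convstatsol}.
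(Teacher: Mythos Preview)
Your upper bound and your subsolution construction are both fine; the problem is the final step, and in fact the detour through Section~\ref{sec:convstatsol} is neither needed nor legitimate as stated.

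First, invoking the convergence result of Section~\ref{sec:convstatsol} is circular in the paper's logical structure: that section explicitly restricts to $c\in[2\sqrt{g'(0)},c^*]$ and takes Theorem~\ref{th:spread_slow} as already established for smaller $c$. Even if one tried to extract the zero-number argument independently, Claim~\ref{claim:Z1} (finiteness of the sign-change count of $u(t,\cdot)-p(\cdot)$) genuinely uses the compact support of $u_0$, which Theorem~\ref{th:spread_slow} does \emph{not} assume. So as written, your last paragraph has a gap.

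Second, and more importantly, the ``naive monotone iteration'' you dismiss actually works and is exactly what the paper does. Your worry that $\sigma p_+^c$ is not a subsolution for $\sigma<1$ is a red herring: nobody needs that. Extend your function $\epsilon\phi_L$ by zero outside $(0,L)$; the resulting function is a (generalized) subsolution of~\eqref{problemmonobis} on the whole line, since at the corners $z=0,L$ the one-sided derivatives satisfy the right inequality and in $\{z<0\}$ the equation is $-u$. Let $\underline{u}$ be the solution of~\eqref{problemmonobis} starting from this subsolution. Then $\partial_t\underline{u}>0$, $\underline{u}$ is bounded (by Lemma~\ref{lem:sup_cv} or simply by any constant $M>1$), and hence $\underline{u}(t,\cdot)$ increases locally uniformly to a nonnegative, nontrivial, bounded stationary solution. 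By Theorem~\ref{th:stationary} (uniqueness when $c<2\sqrt{g'(0)}$), this limit is $p_+^c$. Finally, for arbitrary nontrivial $u_0\geq 0$, the strong maximum principle gives $u(1,\cdot)>0$, so for $\epsilon$ small enough $u(1,\cdot)\geq\epsilon\phi_L$, and comparison yields $\liminf_{t\to+\infty}u(t,\cdot)\geq p_+^c$ locally uniformly. Combined with your upper bound, this finishes the proof without any appeal to entire solutions or zero-number arguments.

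The paper's proof follows precisely this monotone-iteration route; the only cosmetic difference is that it builds the compactly supported subsolution from the spiralling phase portrait of $\phi''+c\phi'+g(\phi)=0$ (complex eigenvalues at the origin when $c<2\sqrt{g'(0)}$) rather than from the Dirichlet eigenfunction, but the two constructions are interchangeable.
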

\begin{proof}
Thanks to Lemma~\ref{lem:sup_cv}, we only need to prove that
$$\liminf_{t \to +\infty} \  u(t,\cdot) \geq p_+^c ,$$
where the limit is understood to be locally uniform with respect to~$z$.

To do so, we exhibit a non trivial but arbitrarily small subsolution of~\eqref{problemmonobis}. We argue in the phase plane of \eqref{ODE2}:
\begin{equation*}
\phi'' + c\phi' + g(\phi)=0.
\end{equation*}
Since $c < 2 \sqrt{g'(0)}$, the eigenvalues of the linearized problem around $(0,0)$ are in $\mathbb{C} \setminus \mathbb{R}$. Therefore, for any $\kappa \in (0,1)$, the trajectory in the phase plane going through the point $(\kappa,0)$ crosses the vertical axis twice. Cutting this trajectory and extending it by $0$, it is then straightforward to construct a nonnegative compactly supported subsolution $\phi_\kappa$ of \eqref{problemmonobis}, which tends to 0 as $\kappa \to 0$ and, up to some shift, whose support is included in $[0,+\infty)$. Note also that the size of the compact support stays bounded as $\kappa \to 0$, as it is given by the non trivial imaginary part of the complex eigenvalues of $(0,0)$.

Letting $\underline{u}$ be the solution of \eqref{problemmonobis} with the initial datum $u_0=\phi_\kappa$ we know that $\partial_t \underline{u}>0$ for all $t>0$. Moreover, using again Lemma~\ref{lem:sup_cv} or the fact that any constant $M>1$ is a supersolution of \eqref{problemmonobis}, the function $\underline{u}$ is bounded uniformly in time. Thus, by standard parabolic estimates, it converges locally uniformly to a nonnegative, non trivial and bounded stationary solution. By uniqueness (see Theorem~\ref{th:stationary}), it is clear that the limit of $\underline{u}$ must be~$p_+^c$.

Now consider $u_0$ a non trivial, nonnegative and bounded initial datum. Then, by the strong maximum principle, the associated solution $u(t,z)$ is positive for any positive time. In particular $u (1,\cdot) >0$ and one can choose $\kappa$ small enough so that, for all $z \in \R$,
$$\phi_\kappa (z) \leq u(1,z).$$
Here we used the fact that the size of the support of $\phi_\kappa$ remains bounded as $\kappa \to 0$. Using the parabolic comparison principle together with Lemma~\ref{lem:sup_cv}, we conclude that
$$p_+^c(z)\leq \underset{t\to+\infty}{\liminf}\:u(t,z)\leq\underset{t\to+\infty}{\limsup}\:u(t,z)\leq p_+^c(z),$$
where the limits are understood to be (at least) locally uniform with respect to $z$.
\end{proof}

\subsection{In the case $c\geq c^*$}\label{sec:large_c}
In this section we prove statement $(iii)$ of Theorem \ref{th:regime} . Note that in the particular case $c=c^*$, here we only prove that spreading does not occur.
\begin{Th}\label{th:nospread}
Let $u_0$ be a non trivial, nonnegative, bounded and compactly supported initial datum. 
\begin{itemize}
\item If $c > c^*$, then the solution $u (t,z)$ of \eqref{problemmonobis}, with initial datum $u_0$, converges uniformly to $0$ as $t \to +\infty$.
\item If $c = c^* \geq 2 \sqrt{g'(0)}$, then the solution $u(t,z)$ does not spread. Furthermore, there exists no time sequence $t_n \to \infty$ such that $u(t_n,z) \to p_+^c (z)$ locally uniformly with respect to $z$.
\end{itemize}
\end{Th}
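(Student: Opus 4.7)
The plan is to use, in each case, a comparison argument with a suitably chosen supersolution drawn from the homogeneous monostable equation~\eqref{problemhomo1}. For \textbf{Part~(i), $c>c^*$}, I would work in the original (non-moving) frame. Since $f(z,s)\le g(s)$ holds for $s\in[0,1]$ (and extends to the range of $u$ after a standard reduction---either modifying $g$ outside $[0,1]$ in a way that preserves $c^*$, or first dominating $u$ by the spatially constant ODE supersolution $v'=g(v)$ to bring it below $1+\varepsilon$), the parabolic comparison principle gives $u(t,x)\le v(t,x)$, where $v$ solves~\eqref{problemhomo1} with initial datum $u_0$. By the Aronson--Weinberger spreading estimate, $\sup_{|x|\ge c^{**}t}v(t,x)\to 0$ as $t\to+\infty$ for every $c^{**}>c^*$. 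Fix $c^{**}\in(c^*,c)$. In the moving frame, for any $R>0$ and $t\ge R/(c-c^{**})$ we have $z+ct\ge c^{**}t$ whenever $z\ge -R$, whence
\[
\sup_{z\ge -R}u^c(t,z)\le \sup_{x\ge c^{**}t}v(t,x)\;\xrightarrow[t\to+\infty]{}\;0.
\]
On the complementary region $\{z\le -R\}$, the uniform-in-time exponential bound $u^c(t,z)\le Ae^{\mu_c z}$ established in the proof of Lemma~\ref{lem:sup_cv} yields $\sup_{z\le -R}u^c(t,z)\le Ae^{-\mu_c R}$. Letting $R\to+\infty$ concludes the uniform convergence of $u^c$ to $0$.

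\textbf{Part~(ii), $c=c^*\ge 2\sqrt{g'(0)}$.} Here the supersolution is a shift $\tilde V(z):=V_{c^*}(z-z_0)$ of the minimal-speed traveling wave of~\eqref{problemhomo1}. One checks directly that
\[
-\tilde V''-c\tilde V'-f(z,\tilde V)=g(\tilde V)-f(z,\tilde V),
\]
which vanishes for $z\ge 0$ (where $f=g$) and equals $g(\tilde V)+\tilde V\ge 0$ for $z<0$ (since $\tilde V\in(0,1)$), so $\tilde V$ is a stationary supersolution of~\eqref{problemmonobis}. After reducing to $\|u_0\|_\infty<1$ (by the ODE supersolution $v'=g(v)$, $v(0)=\|u_0\|_\infty$, together with the strong maximum principle), one can pick $z_0$ so large that $V_{c^*}(z-z_0)\ge u_0(z)$ on $\R$: on the compact support of $u_0$, $V_{c^*}(z-z_0)\to 1$ uniformly as $z_0\to+\infty$, eventually exceeding $\|u_0\|_\infty$, while outside the support $u_0\equiv 0$. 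Parabolic comparison then gives $u^c(t,z)\le V_{c^*}(z-z_0)$ for every $t\ge 0$ and $z\in\R$. To conclude, suppose for contradiction there exists $t_n\to+\infty$ with $u^c(t_n,\cdot)\to p_+^c$ locally uniformly. Then $u^c(t_n,R)\to p_+^c(R)$ for every fixed $R\in\R$, while $V_{c^*}(R-z_0)\to 0$ and $p_+^c(R)\to 1$ as $R\to+\infty$. Choosing $R$ large enough that $V_{c^*}(R-z_0)<p_+^c(R)/2$ contradicts the inequality $u^c(t_n,R)\le V_{c^*}(R-z_0)$, ruling out any such subsequence; in particular, spreading does not occur.

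\textbf{Anticipated obstacle.} In Part~(i) the homogeneous supersolution $v$ does \emph{not} tend to $0$ uniformly in the moving frame: viewed in the moving frame at speed $c$, $v$ approaches $1$ on a leftward-moving interval of width $\sim 2c^*t$ (images of the spreading plateau of $v$ in the original frame). The uniform-in-$z$ statement is recovered only by coupling the Aronson--Weinberger estimate, valid on $\{z\ge -R\}$, with the killing-zone exponential bound, valid on $\{z\le -R\}$. In Part~(ii), the main technicality is that $V_{c^*}<1$ strictly cannot dominate initial data with $\|u_0\|_\infty\ge 1$; this is precisely where the preliminary ODE supersolution together with the strong maximum principle is needed to drag $u$ strictly below $1$ in finite time before applying the traveling-wave comparison.
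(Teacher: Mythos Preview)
Your overall strategy is sound and, where it works, cleaner than the paper's. Part~(i) is essentially correct: the comparison with the homogeneous solution, the Aronson--Weinberger front estimate on $\{z\ge -R\}$, and the exponential bound $u\le Ae^{\mu_c z}$ on $\{z\le -R\}$ combine exactly as you describe. The paper reaches the same conclusion but through a more elaborate route, constructing a single time-dependent supersolution $V_*(x-c^*t-M(1-e^{-\eta t}))+\eta e^{-\eta t}\varphi(\cdot)$ that handles both speed regimes at once.

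There is, however, a genuine gap in Part~(ii) when $\|u_0\|_\infty\ge 1$. Your proposed reduction (``drag $u$ strictly below $1$ in finite time via the ODE supersolution $v'=g(v)$, then apply the shifted-wave comparison'') runs into two problems. First, the ODE supersolution with $v(0)=\|u_0\|_\infty>1$ satisfies $v(t)\to 1^+$ but never drops strictly below $1$, so neither does the resulting upper bound on $u$; the strong maximum principle only separates $u$ from $v$, not from $1$. Second, and more fundamentally, once you have waited to some time $T_0$ (using, say, Lemma~\ref{lem:sup_cv} to get $u(T_0,z)\le p_+^c(z)+\delta(T_0)<1$ on any bounded $z$-range), the profile $u(T_0,\cdot)$ is no longer compactly supported. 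Your argument ``outside the support $u_0\equiv 0$'' then fails: both $u(T_0,z)$ and $V_{c^*}(z-z_0)$ tend to $0$ as $z\to+\infty$, and you must compare their decay rates. This is exactly where the paper invokes the Gaussian heat-kernel supersolution $\overline{u}_2(t,z)=\frac{e^{At}}{\sqrt{4\pi t}}\int u_0(y)e^{-|z+ct-y|^2/4t}\,dy$, whose decay at fixed $T_0$ beats the (at most) exponential decay of $V_{c^*}$, so that a sufficiently large shift $z_0$ does give $u(T_0,\cdot)\le V_{c^*}(\cdot-z_0)$.

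Once you add this ingredient, your stationary-supersolution argument goes through and is arguably more elegant than the paper's: the paper builds a time-dependent barrier that starts above $1$ and relaxes to the minimal wave precisely to avoid the ordering issue at $+\infty$, whereas you can use the unmodified wave $V_{c^*}(\cdot-z_0)$ directly after combining Lemma~\ref{lem:sup_cv} (for bounded $z$) with the heat-kernel bound (for large $z$) at a single large time.
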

To conclude that extinction occurs in the critical case is slightly more intricate. This will be performed in the next section by a combination of Theorem~\ref{th:nospread} and some exponential bounds: we refer more precisely to Proposition~\ref{prop:conv} and the subsequent discussion.

\begin{proof}In both cases, the proof relies on the fact that solutions of the homogeneous monostable reaction-diffusion equation
\begin{equation}\label{problemmonohom}
\partial_t v-\partial_{xx}v =g(v), \quad t\in(0,+\infty), \: x\in\R,
\end{equation}
are supersolutions of \eqref{problemmono}. 

Classical results from Aronson-Weinberger \cite{AW} imply that the solution~$v$ of \eqref{problemmonohom} with a (compactly supported) initial datum $u_0$ spreads with speed $c^*$, in the sense that
$$
\left\{
\begin{array}{l}
\forall \: 0 < \nu < c^* , \quad \lim_{t \to +\infty} \sup_{0 \leq x \leq \nu t } | v (t,x) -1 |= 0,\vspace{3pt}\\
\forall \:\nu > c^* , \quad \lim_{t \to +\infty} \sup_{x \geq \nu t } v (t,x) = 0.
\end{array}
\right.
$$
In particular, it would follow by the comparison principle that
\begin{equation}\label{eq:nospread1}
\forall \:\nu > c^* , \quad \lim_{t \to +\infty} \sup_{x \geq \nu t } u (t,x) = 0,
\end{equation}
where in this equation $u(t,x)$ is the solution of \eqref{problemmono} in the non moving frame. In fact, the result of Aronson and Weinberger was restricted to the special case $0 \leq u_0 \leq 1$. Here we will prove~\eqref{eq:nospread1} by a similar argument, which has also been known to extend Aronson and Weinberger's result to the general case $0 \leq u_0$ under our assumption that $g (s)< 0$ for all $s >1$.

First recall from Lemma~\ref{lem:sup_cv} that
\be\label{eq:delta}\delta (t) := \sup_{z \in \R} \left( u(t,z) - p_+^c (z) \right) \to 0, \ee
as $t \to +\infty$. Moreover, letting
$$A := \sup_{0\leq u \leq M} \frac{g(u)}{u} ,$$
it is straightforward that, for any $t \geq 0$ and $z\in \R$, the function
$$\overline{u}_2 (t,z) := \frac{e^{At}}{\sqrt{4 \pi t}} \int_\R u_0 (y) e^{-\frac{|z + ct-y|^2}{4t}} dy $$
is a supersolution of \eqref{problemmonobis} which also satisfies $\overline{u}_2 (0,\cdot) = u_0 (\cdot)$. We can now conclude from the above and the comparison principle that, for all $t \geq 0$ and $z \in \R$,
$$u(t,z) \leq \min \{ p_+^c (z) + \delta (t),  \overline{u}_2(t,z) \}.$$
Next, we define $\eta >0$ small enough so that
$$\forall s \in (1-\eta,1+\eta), \quad g' (s) \leq \frac{g'(1)}{2} < 0,$$
and
$$\eta \leq -\frac{g'(1)}{4},$$
and choose $\varphi (s)$ a nonincreasing smooth function which is identically equal to 1 when $s \leq -1$ and identically equal to 0 when $s \geq 0$. 

Then, we introduce $V_*$ the travelling wave solution with minimal speed $c^*$ of the homogeneous problem~\eqref{problemmonohom}, shifted so that $V_* (0) = 1 - \frac{\eta}{2}$. We recall from~\cite{AW} that $V_*$ is a decreasing function and that $V_* ' (s) < 0$ for all $s \in \R$. In particular,
$$\forall s \leq 0 , \quad V_* (s) \geq 1 - \frac{ \eta}{2},$$
and
$$\min_{-1 \leq s \leq 0 } - V_* ' (s) >0.$$
Now we define
\begin{equation}\label{eq:relec0}
\overline{u} (t,x) = V_* (x - c^* t -  M (1 - e^{-\eta t} )) + \eta e^{-\eta t} \varphi (x - c^* t - M (1-e^{-\eta t})),
\end{equation}
where $M$ is a large enough constant so that
\begin{equation}\label{choose_M}
M \min_{-1 \leq s \leq 0 } - V_* ' (s) \geq  \| \varphi '' \|_{L^\infty}.
\end{equation}
Clearly $\overline{u} (t,x)$ is a supersolution of~\eqref{problemmonohom} for all $t \geq 0$ and $x \geq c^* t+M(1-e^{-\eta t})$. On the other hand, for any $t \geq 0$ and $x \leq c^* t+M(1-e^{-\eta t})$:
\begin{eqnarray*}
&&\partial_t \overline{u} - \partial_{xx} \overline{u} - g (\overline{u}) \\
& \geq & - \eta M e^{-\eta t} V_*  ' + \eta e^{-\eta t} \left( -c^* \varphi ' - M \eta e^{-\eta t} \varphi '  -  \varphi '' - \eta \varphi - \frac{g'(1)}{2}  \varphi \right) \\
& \geq & \eta e^{-\eta t} \left( - M V_* ' -  \varphi '' - \frac{g'(1)}{4}  \varphi \right) .
\end{eqnarray*}
If moreover $x \leq c^* t +M(1-e^{-\eta t})- 1$, it immediately follows from the fact that $g'(1) < 0$ and $V_* ' (z) < 0$ for all $z \in \R$ that
$$\partial_t \overline{u} - \partial_{xx} \overline{u} - g (\overline{u}) \geq 0,$$
while if $c^* t +M(1-e^{-\eta t})- 1 \leq x \leq c^* t+M(1-e^{-\eta t})$, the same inequality follows from~\eqref{choose_M}. We conclude that $\overline{u} (t,x)$ is a supersolution of \eqref{problemmonohom}, as well as $\overline{u} (t,x-X)$ for all $X >0$. Thus the function $\overline{u} (t,z + ct - X)$ is a supersolution of \eqref{problemmonobis} for any $X >0$.

Finally, choose $T$ large enough so that $\delta (T)$ defined in \eqref{eq:delta} is such that $\delta(T)\leq \frac{\eta}{4}$, and hence
$$p_+^c (z) + \delta (T) \leq 1 + \frac{\eta}{4} \leq \overline{u} (0,z-X)$$
for all $z \leq X-1$. Notice that the choice of~$T$ does not depend on~$X>0$. Since it is known by phase plane analysis (see either~\cite{AW} or the similar proof of Proposition~\ref{prop:exp_decay} above) that $V_* (s)$ decays at most with the exponential rate $\frac{c^* +\sqrt{c^* \,^2 - 4 g' (0)}}{2}$ as $s \to +\infty$, one can then find~$X$ large enough such that for all $z \in \R$,
$$\min \{ p_+^c (z) + \delta (T), \overline{u}_2 (T,z) \} \leq \overline{u} (0,z  -X).$$
Applying again the comparison principle, we get that for all $t \geq T$ and $z \in \R$,
\begin{equation}\label{nospread_almost}
u(t,z) \leq \overline{u} (t-T,z + c (t-T) -X).
\end{equation}
From the construction of $\overline{u}$ above in~\eqref{eq:relec0}, it follows that spreading does not occur when $c \geq c^*$, even along any time sequence. The second part of Theorem~\ref{th:nospread} is proved.

Now consider the case $c > c^*$. Choose any $\varepsilon >0$ and~$Z>0$ such that $p_+^c (z) \leq \varepsilon$ for all $z \leq -Z$. Putting the inequality~\eqref{nospread_almost} together with Lemma~\ref{lem:sup_cv}, it is straightforward that
$$\limsup_{t\to +\infty , z \in \R} u (t,z) \leq \varepsilon .$$
Letting $\varepsilon \to 0$, this concludes the proof of Theorem~\ref{th:nospread}.
\end{proof}

\subsection{In the case $c\in [2\sqrt{g'(0)},c^*)$}

In this speed range we want to prove that the asymptotic behaviour of $u$ depends on the initial condition (statement $(iii)$ of Theorem \ref{th:regime}), in the sense that both spreading and extinction may occur. More precisely, we will prove the following theorem:
\begin{Th}\label{thm:interm_0} Assume that $c \in [ 2\sqrt{g'(0)},c^*)$.
\begin{enumerate}[$(i)$]
\item There exists some (large enough) nonnegative, bounded and compactly supported initial datum $u_{0,1}$ such that the associated solution satisfies
$$\lim_{t \to +\infty} u_1 (t,z) = p_+^c ,$$
where the convergence is locally uniform in $z$. Moreover, $u_{0,1}$ can also be chosen so that 
$$\min_{z \in \R} p_+^c (z) - u_{0,1} (z) > 0.$$
\item There also exists some (small enough) non trivial and compactly supported initial datum $0 \leq u_{0,2} < 1$ such that the associated solution satisfies
$$\lim_{t \to +\infty} u_2 (t,z) = 0,$$
where the convergence is uniform in $z$.
\end{enumerate}
\end{Th}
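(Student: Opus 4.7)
The plan is to construct, for $(i)$, a compactly supported stationary subsolution whose maximum strictly exceeds the maxima of all ground states, and for $(ii)$, a time-decaying exponential supersolution dominating a small enough initial datum.

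For $(i)$, I would argue in the phase plane of the autonomous ODE $p'' + cp' + g(p) = 0$ on the favourable half-line. Since $c < c^*$, the extremal trajectory (tangent at the origin to the fast stable eigendirection $p' = -\lambda_+(c)\, p$) does not connect to the saddle $(1,0)$ and, followed backward, meets the positive $p'$-axis at some point $(0, p'_{\mathrm{ext}})$. For $p'_0$ slightly greater than $p'_{\mathrm{ext}}$, the forward trajectory from $(0, p'_0)$ reaches a maximum $p_{\max} \in (M_\star, 1)$, where $M_\star := \max p_{\alpha^*_c}$, and returns to $\{p = 0\}$ in finite ``time'' with negative slope. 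Extending by $0$ yields a compactly supported nonnegative function $\underline{u}$ whose corner kinks correspond to upward jumps in the derivative, i.e., downward Dirac masses in $-u''$, which makes $\underline{u}$ a stationary subsolution of \eqref{problemmonobis} in the distributional sense. Shifting the support far into $\{z > 0\}$ preserves the subsolution property by translation invariance of the equation on $\{z > 0\}$, and for a large enough shift $a > 0$ one arranges $u_{0,1} := \underline{u}(\cdot - a) < p_+^c$ pointwise with a uniform gap on its support, using $p_+^c(z) \to 1$ as $z \to +\infty$. By the phase plane analysis of Section~\ref{sec:stationarysol_sub1}, every ground state trajectory is trapped in the region bounded by the extremal, so $\max p_\alpha \leq M_\star < \max u_{0,1}$ for every admissible $\alpha$. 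The solution $u_1$ is then nondecreasing in time (comparison of $u_1$ with its own time-shifts) and bounded above by $p_+^c + \delta(t)$ via Lemma~\ref{lem:sup_cv}, hence converges locally uniformly to a stationary solution $p^\star \geq u_{0,1}$. By Theorem~\ref{th:stationary}, $p^\star \in \{0, p_\alpha, p_+^c\}$; the first is excluded by positivity, and the ground state case by $\max p^\star \geq \max u_{0,1} > \max p_\alpha$, leaving $p^\star = p_+^c$.

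For $(ii)$, I would try $\overline{u}(t,z) = \epsilon\, e^{-\lambda t} \phi(z)$ with $\phi(z) = e^{\nu z}$ on $\{z \leq 0\}$ and $\phi(z) = e^{-\mu z}$ on $\{z \geq 0\}$, glued continuously at $0$. The downward kink $\phi'(0^+) - \phi'(0^-) = -\mu - \nu < 0$ produces a positive Dirac mass in $-\phi''$, making the supersolution inequality automatic at $z = 0$ in the viscosity sense. Using $g(s) \leq g'(0) s + C s^{1+r}$ on $[0, \epsilon]$ from the $C^{1,r}_{loc}$ regularity, the pointwise conditions away from $0$ reduce to $\mu^2 - c\mu + g'(0) + C\epsilon^r + \lambda \leq 0$ on $\{z > 0\}$ and $\nu^2 + c\nu + \lambda - 1 \leq 0$ on $\{z < 0\}$. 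Taking $\mu = c/2$ yields $\lambda \leq c^2/4 - g'(0) - C\epsilon^r$, which is strictly positive when $c > 2\sqrt{g'(0)}$ and $\epsilon$ is small; the $\nu$-condition is then met for small $\nu > 0$ as soon as $\lambda < 1$. Picking any non-trivial compactly supported $u_{0,2} \leq \epsilon \phi$ yields uniform exponential decay $u_2(t, \cdot) \to 0$ (since $\phi \leq 1$ on $\mathbb{R}$).

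The main obstacle will be the critical case $c = 2\sqrt{g'(0)}$, where $c^2/4 - g'(0) = 0$, so the exponential ansatz degenerates to a stationary supersolution and any nonlinear $C\epsilon^r$ correction flips the sign of the would-be decay rate. To handle it I would combine a polynomial-in-time factor $(1+t)^{-\gamma}$ with a profile $\phi$ that is a strict supersolution of the linearized stationary operator on $\{z < 0\}$ (where, after the substitution $\psi = e^{cz/2} u$, the Schrödinger potential $V(z) = c^2/4 + 1$ is bounded away from $0$), and control the residual nonlinear term on $\{z > 0\}$ via dispersive estimates for the corresponding heat-type equation with absorbing matching at $z = 0$. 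Alternatively, one can invoke the classification of large-time limits established in Section~\ref{sec:convstatsol}: combined with the exponential decay asymptotics of ground states from Proposition~\ref{prop:exp_decay}, this rules out, for sufficiently small compactly supported $u_{0,2}$, convergence to any non-trivial stationary solution.
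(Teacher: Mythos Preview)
Your argument for part~$(i)$ is essentially the paper's: both construct a compactly supported bump subsolution in the phase plane of $\phi'' + c\phi' + g(\phi)=0$ whose maximum strictly exceeds $M_\star = \max p_{\alpha^*_c}$, then use monotonicity in time and Theorem~\ref{th:stationary} to force convergence to $p_+^c$. You parametrize by the entry slope $p'_0$ at $p=0$, the paper by the peak value $\theta_1 \in (\theta_c,1)$ at $p'=0$; these are the same trajectories.

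For part~$(ii)$ your explicit supersolution $\epsilon e^{-\lambda t}\phi(z)$ is a genuinely different route and is correct when $c > 2\sqrt{g'(0)}$. It buys an explicit exponential decay rate, at the cost of being restricted to the open range.

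The gap is at $c = 2\sqrt{g'(0)}$. Your decay rate $\lambda \leq c^2/4 - g'(0) - C\epsilon^r$ collapses to a negative number, and neither of your proposed fixes closes it. The first (polynomial time factor plus dispersive estimates on the half-line) is not worked out and it is unclear how the matching at $z=0$ would produce genuine decay. The second (invoke Section~\ref{sec:convstatsol}) is not strictly circular, since that section only uses part~$(i)$ of the present theorem, but your sketch does not say \emph{which} small $u_{0,2}$ to take: merely being uniformly small does not preclude spreading, and your reference to Proposition~\ref{prop:exp_decay} does not by itself give a barrier. You would still need to place $u_{0,2}$ below some ground state $p_\alpha$ with $\alpha < \alpha^*_c$, and then argue that the limit cannot be $p_{\alpha^*_c}$ because $p_\alpha(0)=\alpha<\alpha^*_c$.

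The paper avoids all of this by taking the supersolution $\hat\phi := \min\{p_\alpha, p_{\alpha^*_c}\}$ for any $\alpha \in (0,\alpha^*_c)$. This works uniformly for every $c \in [2\sqrt{g'(0)},c^*)$, with no case distinction: the solution from $\hat\phi$ is decreasing in time and converges to a stationary state $p \leq \min\{p_\alpha, p_{\alpha^*_c}\}$, and since $p_\alpha$ and $p_{\alpha^*_c}$ cross (by~\eqref{eq:left} for $z<0$ and by Proposition~\ref{prop:exp_decay} for large $z>0$), no ground state lies below both, forcing $p\equiv 0$. Any compactly supported $u_{0,2} \leq \hat\phi$ then goes extinct.
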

%
%

\subsubsection{For large initial conditions}\label{subsec:large_data}

Assume that $c \in [2 \sqrt{g'(0)},c^*)$ and let us find some initial datum $u_{0,1}$ such that the associated solution converges to~$p_+^c$. Recalling our argument in Section~\ref{sec:stationarysol}, and more precisely the third step in the proof of Theorem~\ref{th:stationary}, the extremal trajectory in the phase plane of the homogeneous ODE~\eqref{ODE2} (which converges to $(0,0)$ from the bottom right and lies below any other trajectory converging to $(0,0)$ without crossing the vertical axis) crosses the horizontal axis at some point $(\theta_c ,0)$ with $0 < \theta_c < 1$. 

We now choose $\theta_1 \in (\theta_c,1)$ and denote by $\phi (z)$ the unique solution of \eqref{ODE2} (which we consider here on the whole real line), such that $\phi (0)= \theta_1$ and $\phi ' (0) = 0$. By our choice of $\theta_1$, there must exist some $z_1 >0$ such that $\phi (z_1) = 0$ and $\phi (z)>0 > \phi ' (z)$ for all $z\in [0,z_1)$. Moreover, it is also straightforward by standard phase plane analysis that there exists some $z_2 <0$ such that $\phi (z_2) = 0$, $\phi (z) >0 $ and $\phi ' (z) >0$ for all $z \in (z_2,0]$. Let us now restrict $\phi$ to the interval $[z_2, z_1]$, then extend it to 0 outside this interval. With some slight abuse of notations, we still denote by $\phi$ the resulting function, and note that the function
$$\tilde{\phi}(z)=\phi(z+z_2-1) $$ is a compactly supported subsolution of \eqref{problemmonobis}.

Then define $\underline{u}$ the increasing in time solution of \eqref{problemmonobis} with initial datum $\tilde{\phi}$. Applying Lemma~\ref{lem:sup_cv} and standard estimates, we conclude that $\underline{u}$ converges as $t \to +\infty$ to a stationary solution $p$ of \eqref{problemmonobis} satisfying $\tilde{\phi} \leq p \leq p_+^c$. We claim that $p$ may only be $p_+^c$.
If not, then $p \equiv p_\alpha$ for some $\alpha \in (0,\alpha^*_c]$ where $(p_\alpha)_\alpha$ denotes the family of ground states constructed in Section~\ref{sec:stationarysol}. It again follows from the third step in the proof of Theorem~\ref{th:stationary} that, for any $\alpha \in (0,\alpha^*_c]$, the trajectory in the phase plane of \eqref{ODE2} associated with $p_\alpha$ lies `above' the extremal trajectory and, in particular, it crosses the horizontal axis at some point $(\theta_\alpha , 0)$ with $0 < \theta_\alpha \leq \theta_c$. Therefore, we get that for some $\alpha \in (0,\alpha^*_c]$,
$$\max_{z \in \R} p  = \theta_\alpha \leq \theta_c .$$
However, this clearly contradicts our choice of $\theta_1$ above and the fact that, for all $t >0$,
$$\max_{z \in \R} p (z) \geq \max_{z \in \R} \underline{u} (t,z) \geq \max_{z \in \R} \tilde{\phi} (z) = \theta_1 .$$
We conclude that $\underline{u} (t,z)$ converges as $t \to +\infty$ to $p_+^c$ locally uniformly with respect to~$z$, and since $\underline{u}$ is increasing in time and $\underline{u} (0,\cdot) \equiv \tilde{\phi} (\cdot)$ has compact support, it becomes straightforward that $\min_\R p_+^c - \tilde{\phi} >0$. Finally, it follows by the comparison principle that spreading occurs for any bounded initial datum $u_{0,1} \geq \tilde{\phi}$.

\subsubsection{For small initial conditions}

We again assume that $c \in [2 \sqrt{g'(0)},c^*)$ and we prove that for some small initial data, the solution of \eqref{problemmonobis} converges to~0 uniformly with respect to $z$ as time goes to infinity.  We introduce
$$\hat{\phi} = \min \{ p_\alpha , p_{\alpha^*_c} \},$$
where $\alpha \in (0, \alpha^*_c)$. Then $\hat{\phi}$ is a (generalized) supersolution of \eqref{problemmonobis}.

Looking back at our construction of ground states, it is clear from the proof of Theorem~\ref{th:stationary} (recall~\eqref{eq:left}) that 
$$\forall z < 0 , \quad \hat{\phi} (z) = p_\alpha (z),$$
and that, by Proposition~\ref{prop:exp_decay}, there exists $z_0 >0$ such that
$$\forall z > z_0 , \quad \hat{\phi} (z) =p_{\alpha^*_c} (z).$$
Therefore, the solution $\overline{u}$ of \eqref{problemmonobis} with initial datum $\hat{\phi}$ is decreasing in time, and hence it converges locally uniformly to a bounded and nonnegative stationary solution~$p$ such that
$$p \leq \min \{ p_\alpha , p_{\alpha^*_c} \}.$$
Clearly $p$ is either a ground state or the trivial solution~0. However, for any $\alpha \in (0,\alpha^*_c)$ and proceeding as above, we have that $p_{\alpha^*_c} (z) - p_\alpha (z)$ is strictly positive for negative $z$, and strictly negative for large positive~$z$. Thus, $p \equiv 0$.

Let us now briefly check that the convergence is in fact uniform with respect to $z \in \R$. Choose any small $\varepsilon >0$, and let $Z$ such that
$$\sup_{|z| \geq Z } \hat{\phi} (z) \leq \varepsilon.$$
Then, by monotonicity with respect to time, it follows that $\overline{u} (t,z) \leq \varepsilon$ for all $t>0$ and $|z| \geq Z$. Together with the locally uniform convergence to $p \equiv 0$ proved above, it is straightforward to conclude that
$$\lim_{t\to +\infty} \sup_{z \in \R} \overline{u} (t,z) =0.$$
It of course follows that, for any initial datum $0 \leq u_{0,2} \leq \hat{\phi}$, the associated solution converges uniformly to~0. This ends the proof of Theorem~\ref{thm:interm_0}.

\section{Exponential bound and convergence in the moving frame}\label{sec:convstatsol}

In this section we will prove Proposition~\ref{prop:conv} which states that the solution $u(t,z)$ of \eqref{problemmonobis} converges as time goes to infinity, for all $c \geq 0$, to a stationary solution that is either 0, the critical ground state $p_{\alpha^*_c}$ (when it exists) or the invasion state~$p_+^c$. We already know from the previous section that for $c<2\sqrt{g'(0)}$, $u(t,z)$ converges locally uniformly to $p_+^c$ as time goes to infinity, whereas when $c>c^*$, $u(t,z)$ converges uniformly to 0 as time goes to infinity. 

Therefore, we only consider throughout this section the remaining case $c \in [ 2 \sqrt{g' (0)}, c^*]$ (note that we allow this interval to be reduced to a singleton as in the KPP case). While the convergence to a stationary solution follows a zero number argument inspired from Du and Matano~\cite{DM}, the fact that grounding may only occur to the critical ground state (that is, the solution may never converge to a ground state other than~$p_{\alpha^*_c}$) relies on uniform in time exponential bounds on the solution $u(t,z)$ as $z \to +\infty$, see Proposition~\ref{th:boundexp} below.

\subsection{Exponential bound of the solution}\label{sec:energy}

We have already shown in Section~\ref{sec:expdecay} that the critical ground state $p_{\alpha^*_c} (z)$ distinguishes itself from the other ground states by its faster decay as $z \to +\infty$. Therefore, we now take interest in the behaviour of $u(t,z)$ the solution of \eqref{problemmonobis} as $z \to +\infty$. It turns out that, for compactly supported initial data and unless spreading occurs, the solution $u(t,z)$ satisfies uniform in time exponential bounds which will immediately rule out non critical ground states in the large-time asymptotics. More precisely, we now prove the following:
\begin{Prop}\label{th:boundexp}
 Let $c \in [2 \sqrt{g'(0)},c^*]$. Let also $u_0$ be a bounded, nonnegative, non trivial and compactly supported function, and~$u$ be the associated solution of~\eqref{problemmonobis}. Assume that $u$ does not spread (in the sense of Definition~\ref{def:main}).
\begin{itemize} 
\item If $c\in(2\sqrt{g'(0)},c^*]$, then there exist $\epsilon>0$ and $Z_\epsilon>0$ such that for any $t\in(0,+\infty)$ and $z\geq Z_\varepsilon$,
$$u(t,z)\leq \epsilon e^{-\lambda_\gamma (c)(z- Z_\varepsilon)},$$
where
$$\lambda_\gamma (c) :=\frac{c+\sqrt{c^2-4(g'(0)+\gamma)}}{2},$$
for some $\gamma>0$ such that $c>2\sqrt{g'(0)+\gamma}$.
\item If $c=2\sqrt{g'(0)}$, then there exists $Z_r>0$ such that for all $t\in(0,+\infty)$ and $z\geq Z_r$,
$$u(t,z)\leq (1+\sqrt{z - Z_r})e^{-\frac{c}{2} (z- Z_r) }.$$
\end{itemize}
\end{Prop}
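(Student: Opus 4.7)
My plan is to compare $u$ with a carefully chosen stationary supersolution of~\eqref{problemmonobis} on a right half-line $\{z \geq Z_\epsilon\}$ having precisely the announced exponential (or exponential-times-polynomial) decay, and to apply the parabolic comparison principle. The delicate step is controlling $u$ on the lateral boundary $\{z = Z_\epsilon\}$ uniformly in time, which is where the hypothesis that $u$ does not spread enters.

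\textbf{Constructing the supersolution.} Pick $\gamma > 0$ small enough that $c > 2\sqrt{g'(0) + \gamma}$, and use the $C^{1,r}_{loc}$ regularity of $g$ at $0$ to find $\eta_0 > 0$ with $g(s) \leq (g'(0) + \gamma)s$ for $s \in [0, \eta_0]$. In the case $c > 2\sqrt{g'(0)}$, the function $\bar\phi(z) := \epsilon\, e^{-\lambda_\gamma(c)(z - Z_\epsilon)}$ (with $\epsilon \leq \eta_0$) satisfies
\[
\bar\phi'' + c\bar\phi' + g(\bar\phi) \;\leq\; \bigl(\lambda_\gamma(c)^2 - c\lambda_\gamma(c) + g'(0) + \gamma\bigr)\bar\phi \;=\; 0
\]
on $\{z \geq Z_\epsilon\}$, hence is a stationary supersolution. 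In the critical case $c = 2\sqrt{g'(0)}$, a direct computation on $(1 + \sqrt{z - Z_r})\,e^{-c(z - Z_r)/2}$ produces a strictly negative residual $-\tfrac{1}{4}(z - Z_r)^{-3/2}\,e^{-c(z - Z_r)/2}$ in $\phi'' + c\phi' + g'(0)\phi$, which dominates the nonlinear error $g(\phi) - g'(0)\phi = O(\phi^{1+r})$ for $z - Z_r$ large enough, yielding again a stationary supersolution.

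\textbf{Lateral bound by a no-spreading contradiction.} The initial comparison $u(0,\cdot) \equiv 0 \leq \bar\phi$ on $\{z \geq Z_\epsilon\}$ is free once $Z_\epsilon > \sup \mathrm{supp}(u_0)$, so the heart of the matter is $u(t, Z_\epsilon) \leq \bar\phi(Z_\epsilon)$ for every $t > 0$. I would prove this by contradiction in the case $c < c^*$. If it failed for every $Z$, there would exist $Z_n \to +\infty$ and $t_n \geq 0$ with $u(t_n, Z_n) \geq \epsilon$; the Gaussian upper bound $\bar u_2$ from the proof of Lemma~\ref{lem:sup_cv} forces $t_n \to +\infty$. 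By parabolic regularity, the shifts $u_n(t, z) := u(t + t_n, z + Z_n)$ converge (up to subsequence, locally uniformly on $\R \times \R$) to a bounded entire solution $u_\infty$ of the \emph{homogeneous} equation $u_t - u_{zz} - c u_z = g(u)$---since $f(z + Z_n, \cdot) = g(\cdot)$ for $n$ large---with $0 \leq u_\infty \leq 1$ and $u_\infty(0, 0) \geq \epsilon$. Converting to the non-moving frame via $\tilde u_\infty(t, x) := u_\infty(t, x - ct)$ and invoking the Aronson--Weinberger hair-trigger result applied to the forward Cauchy problem starting from the nontrivial datum $\tilde u_\infty(0, \cdot)$ yields $\tilde u_\infty(t, x) \to 1$ uniformly on $|x| \leq \hat c\, t$ for any $\hat c < c^*$. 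Picking $\hat c \in (c, c^*)$ (possible because $c < c^*$), this translates back into $u_\infty(\tau, \cdot) \geq 1 - \eta$ uniformly on any prescribed $[-R, R]$ for $\tau$ large. Passing back to $u_n$, for $n$ large one gets $u(t_n + \tau, w) \geq 1 - 2\eta$ on $[Z_n - R, Z_n + R] \subset \{w > 0\}$. Taking $R$ larger than the size of the support of the compactly supported subsolution $\tilde\phi$ constructed in Section~\ref{subsec:large_data} and $1 - 2\eta$ above its maximum $\theta_1$, the translate $\tilde\phi(\cdot - s_n)$ (centered inside $[Z_n - R, Z_n + R]$, hence supported in the favorable zone) is a subsolution of~\eqref{problemmonobis} dominated by $u(t_n + \tau, \cdot)$; the comparison principle then forces $u$ to spread, contradicting the standing hypothesis.

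\textbf{Case $c = c^*$ and conclusion.} When $c = c^*$ the hair-trigger step above degenerates since no $\hat c > c$ is available; in place of that argument, the explicit supersolution $\bar u(t - T, z + c(t - T) - X)$ built in the proof of Theorem~\ref{th:nospread} bounds $u$ from above by a traveling-wave-like profile whose limit $V_*$ decays with rate $\lambda_+(c^*)$ (respectively with the linear-times-exponential rate $z\,e^{-c^* z/2}$ in the degenerate subcase $c^* = 2\sqrt{g'(0)}$), so it is eventually dominated by $\bar\phi$ and yields the required lateral bound. Once the lateral bound at $z = Z_\epsilon$ is secured, the parabolic comparison principle in the cylinder $\{t > 0, z > Z_\epsilon\}$ concludes. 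The main obstacle, in my view, is the case $c < c^*$: converting the pointwise information $u_\infty(0, 0) \geq \epsilon$ into a quantitative spreading lower bound for $u$, and verifying that the translated $\tilde\phi(\cdot - s_n)$ remains a subsolution of the full heterogeneous equation thanks to its support lying strictly in the favorable zone $\{w > 0\}$, where the reaction term coincides with the homogeneous $g$.
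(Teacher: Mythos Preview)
Your proposal is correct and takes a genuinely different (and more direct) route to the key step. Both the paper and you reduce the proposition to a uniform-in-time smallness bound of the form ``for every $\epsilon>0$ there exists $Z_\epsilon$ with $u(t,z)\leq\epsilon$ for all $t>0$ and $z\geq Z_\epsilon$'' (the paper's Lemma~\ref{uepsilonthm}); after that, the supersolution construction and comparison are identical. The paper, however, reaches this bound through a chain of intermediate results: first $u\leq 1-\delta$ for large $z$ (Lemma~\ref{lem:energy1}), then a phase-plane lemma (Lemma~\ref{lem:phizerodelta}), then a delicate zero-number argument establishing that $z\mapsto u(t,z)$ is eventually nonincreasing (Lemma~\ref{lem:energy3}), and only then the smallness (Lemma~\ref{uepsilonthm}). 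You bypass all of this with a single compactness argument: extract a limit $u_\infty$ of the shifted sequence, invoke the hair-trigger effect for the homogeneous equation to make $u_\infty$ grow close to~$1$ on arbitrarily large intervals, and slide the subsolution $\tilde\phi$ from Section~\ref{subsec:large_data} underneath. This is essentially the same compactness idea that the paper uses only for the threshold $1-\delta$ in Lemma~\ref{lem:energy1} (case $z_n\to+\infty$), but you apply it at an arbitrary level~$\epsilon$ by adding the hair-trigger step. Your approach is shorter and avoids the zero-number machinery; the paper's route yields spatial monotonicity of $u$ as a byproduct, which is not otherwise needed.

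One small slip: in the degenerate subcase $c=c^*=2\sqrt{g'(0)}$, the minimal wave $V_*$ decays like $z\,e^{-cz/2}$, which is \emph{slower} than your $\bar\phi(z)=(1+\sqrt{z-Z_r})\,e^{-c(z-Z_r)/2}$, so the sentence ``it is eventually dominated by $\bar\phi$'' is false as written. This does not matter: from~\eqref{nospread_almost} you still get $u(t,z)\to 0$ as $z\to+\infty$ uniformly in $t\geq T$ (and the Gaussian bound handles $t\leq T$), which is all you need for the lateral comparison at a fixed abscissa.
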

\begin{Rk}
In the second part of Proposition~\ref{th:boundexp}, we use the notation $Z_r$. The reason is that when $c = 2 \sqrt{g'(0)}$, we need the $C^{1,r}$-regularity of $g$ to get the exponential bound, which is not the case when $c > 2 \sqrt{g'(0)}$.\medskip
\end{Rk}

Notice that the above Proposition~\ref{th:boundexp} is closely related to the energy approach introduced in~\cite{Heinze} and used in several papers to prove convergence to travelling wave solutions~\cite{GR,M04,MN12,MZ,Risler}. We also refer to~\cite{BN} where the energy approach was used in the context of climate change models. Indeed, consider the functional
$$E_c[w] :=\int_\R e^{cz}\left\{\frac{(w')^2}{2}-F(z,w)\right\},$$
with $F (z,w) = \int_0^s f(z,s) ds$, and which is well-defined for any function~$w$ in the weighted Sobolev space $H^1 (\R , e^{cz} dz)$. The functional $E_c $ is the natural energy associated with \eqref{problemmonobis} in the sense that \eqref{problemmonobis} is a gradient flow generated by~$E_c$.

Then, when $c > 2 \sqrt{g'(0)}$, Proposition~\ref{th:boundexp} implies that the energy $E_c [ u(t,\cdot)]$ of the solution remains bounded uniformly in time. Using the energy functional as a Lyapunov function, one can then prove the large-time convergence to some stationary state. Such an approach has been used especially in~\cite{MZ} where sharp dichotomy results were also obtained by observing that the energy remains bounded as time goes to infinity if and only if spreading does not occur.

Note however that in the critical case $c = 2\sqrt{g'(0)}$, Proposition~\ref{th:boundexp} does not guarantee the boundedness of the energy in infinite time, which is why we chose a different approach similar to~\cite{DM}.

\begin{proof}[Proof of Proposition~\ref{th:boundexp}]
Throughout this proof, we assume that $c \in [2 \sqrt{g'(0)},c^*]$ and that spreading does not occur, i.e. the initial datum $u_0$ is chosen so that $u(t,z)$ does not converge (locally uniformly with respect to $z$) to $p_+^c(z)$ as $t\to+\infty$. 

Let us first claim that there also does not exist any sequence $(t_n)_{n \in \mathbb{N}}$ such that $u(t_n,z)$ converges locally uniformly to $p_+^c (z)$ as $n \to +\infty$. If $c= c^*$, this was already proved in Theorem~\ref{th:nospread}. When $c < c^*$, we proceed by contradiction and assume that such a time sequence exists. In particular, there exists $n$ large enough so that, for all $z \in \R$,
$$u(t_n,z) \geq u_{0,1} (z).$$
Here $u_{0,1} < p_+^c$ is a compactly supported function given by Theorem~\ref{thm:interm_0} and is such that the associated solution $u_1$ of \eqref{problemmonobis} converges to $p_+^c$ locally uniformly in $z$ as time goes to infinity. Then, by the comparison principle
$$u(t_n+ t,z) \geq u_1 (t,z),$$
for all $t>0$ and $z \in \R$, which passing to the limit as $t \to +\infty$ contradicts our assumption that spreading in the sense of Definition~\ref{def:main} does not occur. We conclude as announced that $u$ does not converge to $p_+^c$ even along any time sequence.

We will now show that $u$ admits an exponential bound as in Proposition \ref{th:boundexp}. The proof is a succession of several lemmas which follow.
\begin{Lemma}\label{lem:energy1}
Under the same assumptions as in Proposition~\ref{th:boundexp}, let $u$ be the solution of \eqref{problemmonobis} with initial datum $u_0$.

Then there exist $\delta>0$ and $A_\delta >0$ such that 
\begin{equation}\label{energy1_2}
\underset{t \geq 0 \: z \geq A_\delta}{\sup} \: u(t,z) \leq 1 -\delta.
\end{equation}
\end{Lemma}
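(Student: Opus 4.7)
The plan is to argue by contradiction. If the conclusion fails, then for every integer $n \geq 1$ there exist $t_n \geq 0$ and $z_n \geq n$ with $u(t_n, z_n) \geq 1 - 1/n$; in particular, $z_n \to +\infty$. First I would rule out the possibility that $t_n$ stays in a bounded interval $[0,T]$: the Gaussian supersolution $\overline{u}_2(t,z) := \frac{e^{At}}{\sqrt{4\pi t}} \int_\R u_0(y) e^{-(z+ct-y)^2/(4t)} dy$ introduced in the proof of Theorem~\ref{th:nospread} dominates $u$, and because $u_0$ is compactly supported one readily checks that $\sup_{t \in (0,T]} \overline{u}_2(t,z) \to 0$ as $z \to +\infty$, contradicting $u(t_n, z_n) \to 1$. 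Hence one may assume $t_n \to +\infty$ as well.

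The heart of the argument is a parabolic blow-up at $(t_n, z_n)$. Setting $v_n(t,z) := u(t_n+t, z_n+z)$, Lemma~\ref{lem:sup_cv} gives $v_n \leq 1 + o_n(1)$ on compact sets, and because $z_n \to +\infty$ each $v_n$ eventually satisfies on any fixed cylinder the homogeneous monostable equation $\partial_t v - \partial_{zz} v - c \partial_z v = g(v)$. Standard interior parabolic estimates yield a subsequential limit $v_\infty$ which is a bounded classical entire solution of this equation on $\R \times \R$, with $0 \leq v_\infty \leq 1$ and $v_\infty(0,0) = 1$. Writing $w := 1 - v_\infty \geq 0$ and using $g(1) = 0$, the function $w$ satisfies a linear parabolic equation of the form $\partial_t w - \partial_{zz} w - c \partial_z w + b(t,z)\, w = 0$ with bounded coefficient $b$, and vanishes at $(0,0)$; applied backward in time, the strong maximum principle forces $w \equiv 0$ on $(-\infty, 0] \times \R$. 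In particular, for every $R > 0$ and every $\varepsilon > 0$ one has $u(t_n, z_n + z) \geq 1 - \varepsilon$ for $|z| \leq R$ once $n$ is large.

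I would then close the contradiction by invoking the construction in Section~\ref{subsec:large_data}. Fix $\theta_1 \in (\theta_c, 1)$ and let $\tilde{\phi}$ be the associated compactly supported stationary subsolution of~\eqref{problemmonobis}, whose support lies in $\{z > 0\}$ and whose maximum equals $\theta_1$. Any rightward translate $\tilde{\phi}(\cdot - z_0)$ with $z_0 \geq 0$ is again a stationary subsolution, since its support stays in $\{z > 0\}$ where the equation is homogeneous and translation-invariant. Choosing $\varepsilon < 1 - \theta_1$, $R$ larger than the size of $\mathrm{supp}\, \tilde{\phi}$, and then $z_0$ so that $\mathrm{supp}\, \tilde{\phi}(\cdot - z_0) \subset [z_n - R, z_n + R]$ (which requires $n$ large), we obtain $u(t_n, \cdot) \geq \tilde{\phi}(\cdot - z_0)$ everywhere on $\R$. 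The solution of~\eqref{problemmonobis} starting from $\tilde{\phi}(\cdot - z_0)$ is then nondecreasing in time, stays below $p_+^c$ by Lemma~\ref{lem:sup_cv}, and — by the same phase plane argument as in Section~\ref{subsec:large_data} — must converge locally uniformly to $p_+^c$ since its maximum exceeds $\theta_c$ and no ground state attains such a value. Parabolic comparison then forces $u(t_n+\cdot, \cdot) \to p_+^c$ locally uniformly, contradicting the non-spreading hypothesis.

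The main obstacle I expect is this last step, namely ensuring that the Section~\ref{subsec:large_data} argument applies to translates of $\tilde{\phi}$ whose support has been pushed arbitrarily far to the right of $z=0$. The point is that the argument depends only on the maximum value of the initial datum exceeding $\theta_c$ and on the subsolution property — both of which are preserved by any translation keeping the support in $\{z > 0\}$ — together with the universal upper bound provided by Lemma~\ref{lem:sup_cv}. Once this is secured, the remainder is a careful packaging of standard parabolic regularity, the strong maximum principle, and the classification of stationary solutions from Theorem~\ref{th:stationary}.
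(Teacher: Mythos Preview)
Your argument is essentially the paper's own for the case $c < c^*$, and in one respect more streamlined: by negating \eqref{energy1_2} directly you build in $z_n \to +\infty$ from the start, which lets you skip the paper's intermediate statement \eqref{energy1_1} and its separate treatment of bounded $z_n$ and $z_n \to -\infty$. The blow-up step, the strong maximum principle yielding $v_\infty \equiv 1$, and the placement of a translated compactly supported subsolution under $u(t_n,\cdot)$ are all exactly as in the paper.

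There is, however, a genuine gap at $c = c^*$. The subsolution $\tilde{\phi}$ from Section~\ref{subsec:large_data} is constructed only for $c \in [2\sqrt{g'(0)}, c^*)$: its existence relies on the extremal trajectory in the phase plane of \eqref{ODE2} crossing the horizontal axis at some $(\theta_c, 0)$ with $\theta_c \in (0,1)$, and this fails when $c = c^*$ (the extremal trajectory then coincides with, or lies below, the travelling wave and never re-enters $\{p' > 0\}$ between $0$ and $1$). Consequently no compactly supported stationary subsolution of the kind you need is available, and the final comparison step cannot be carried out. Since Proposition~\ref{th:boundexp} is stated for $c \in [2\sqrt{g'(0)}, c^*]$ and the case $c = c^*$ is precisely what is used later to finish the proof of Theorem~\ref{th:regime}~$(iii)$, this case cannot be ignored.

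The paper closes this gap by a completely different and short observation: when $c = c^*$, inequality~\eqref{nospread_almost} from the proof of Theorem~\ref{th:nospread} already gives a supersolution of the form $V_*(z - X - M(1-e^{-\eta(t-T)})) + \eta e^{-\eta(t-T)}\varphi(\cdot)$, which decays to $0$ as $z \to +\infty$ uniformly in $t \geq T$; combined with your Gaussian bound for $t \leq T$, this directly rules out $u(t_n,z_n) \to 1$ with $z_n \to +\infty$. You should insert this before invoking Section~\ref{subsec:large_data}, and then restrict the subsolution argument to $c < c^*$.
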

\begin{proof}
We begin by showing that~\eqref{energy1_2} holds true if there exists $T_\delta > 0 $ such that
\begin{equation}\label{energy1_1}
\underset{t\geq T_\delta,\:z \in\R}{\sup}\: u(t,z)\leq 1-\delta.
\end{equation}
Indeed, assume that \eqref{energy1_1} holds for some $\delta >0$ and~$T_\delta >0$. As we have already mentioned in Section~\ref{sec:large_c}, we have for all $t \geq 0$ and $z \in \R$ that
$$u(t,z) \leq \frac{e^{At}}{\sqrt{4 \pi t}} \int_\R u_0 (y) e^{-\frac{|z+ct-y|^2}{4t}} dy,$$
where $A = \sup_{0\leq u \leq M} \frac{g(u)}{u}$ and $M > \max \{ \|u_0 \|_{L^\infty (\R )} , 1 \}$. Since $u_0$ has a compact support, one can find $A_\delta >0$ such that, for all $z \geq A_\delta$ and $0 \leq t \leq T_\delta$,
$$u (t,z) \leq \frac{e^{At}}{\sqrt{4 \pi t}} \int_\R u_0 (y) e^{-\frac{|z+ct-y|^2}{4t}} dy \leq 1 -\delta .$$
Together with~\eqref{energy1_1}, this implies that~\eqref{energy1_2} holds true.

Let us now prove \eqref{energy1_1}. We already know by Lemma~\ref{lem:sup_cv} that
$$  \lim_{t \to +\infty} \ \sup_{z \in \R} u (t,z) - p_+^c (z) = 0,$$
hence
$$  \limsup_{t \to +\infty} \ \sup_{z \in \R} u (t,z) \leq 1 .$$
Therefore, we can argue by contradiction by assuming that there exist some sequences $t_n \to +\infty$ and $z_n \in \R^\mathbb{N}$ such that $u(t_n,z_n)\to1$ as $n\to+\infty$. If $(z_n)_n$ is bounded in $\R$ then up to a subsequence, $z_n \to z_\infty \in \R$ and the sequence $u_n (t,z) := u(t_n+t,z)$ converges locally uniformly to $u_\infty$ a solution of \eqref{problemmonobis} such that $u_\infty (0,z_\infty) = 1$ and $u_\infty (t,z) \leq 1$ for all $t \in \R$ and $z \in \R$.  As $1$ is a strict supersolution of \eqref{problemmonobis} and applying the strong maximum principle, we reach a contradiction. If $(z_n)_n$ is unbounded from below, then a similar argument leads to the same contradiction. 

Lastly, we assume that $z_n \to +\infty$. Notice first that when $c=c^*$ the inequality \eqref{nospread_almost} in the proof of Theorem \ref{th:nospread} implies that there exists some $X>0$, $T>0$ such that for all $t>T$, $z\in \R$,
$$u(t,z)\leq V_*(z-X-M(1-e^{-\eta (t-T)}))+\eta e^{-\eta(t-T)}\varphi(z-X - M(1-e^{-\eta (t-T)})),$$
and $V_*$, $\varphi$ are two functions that converge to 0 at~$+\infty$. Thus the case $z_n\to+\infty$ cannot happen when $c=c^*$, so that $c<c^*$. Define $u_n (t,z) := u (t_n+ t,z_n+ z)$, which up to the extraction of some subsequence converges to $u_\infty$ satisfying the equation
$$\partial_t u_\infty  - \partial_{zz} u_\infty - c \partial_z u_\infty = g (u_\infty).$$
Furthermore, $u_\infty (0,0) = 1$ and $u_\infty (t,z) \leq 1$ for all $t \in \R$ and $z \in \R$. Applying again the strong maximum, we infer that $u_\infty \equiv 1$. Then there exists $n$ large enough such that, for all $z \in \R$,
$$u (t_n,  z) \geq u_{0,1} (z - z_n),$$
where $u_{0,1}$ is given by Theorem~\ref{thm:interm_0}. Note also that, from the proof of Theorem~\ref{thm:interm_0} and without loss of generality, we may assume that $u_{0,1} (z)$ is a subsolution of \eqref{problemmonobis}, and so is $u_{0,1} (z - z_n)$ thanks to the fact that $z_n >0$, for $n$ large enough. Thus, the solution~$\tilde{u}_1 (t,z)$ of \eqref{problemmonobis} with initial datum $u_{0,1} (z-z_n)$ is bounded and increasing in time. In particular, it converges by standard parabolic estimates to a stationary solution~$p_n$. 

On the other hand, we also know from Theorem~\ref{thm:interm_0} that $u_1 (t,z)$ the solution of \eqref{problemmonobis} associated with $u_{0,1} (z)$ converges locally uniformly to $p_+^c (z)$ as $t \to +\infty$. By the comparison principle and since $u_1 (t,z-z_n)$ is also a subsolution of \eqref{problemmonobis}, we get that
$$u(t_n+ t,z) \geq \tilde{u}_1 (t,z) \geq u_1 (t,z-z_n),$$
for all $t \geq 0$ and $z \in \R$. Thus $p_n (z) \geq p_+^c (z-z_n)$ for all $z \in \R$, $n$ large enough, and it immediately follows from Theorem~\ref{th:stationary} that $p_n (z) = p_+^c (z)$ for all $z \in \R$. We conclude that, for all $Z >0$,
$$\liminf_{t \to +\infty} \inf_{|z|\leq Z} \left( u(t,z) - p_+^c (z) \right) \geq 0.$$
In other words (recall also Lemma~\ref{lem:sup_cv}), we have just proved that spreading occurs, which again contradicts our hypotheses. This proves~\eqref{energy1_1}, hence~\eqref{energy1_2}.

\end{proof}

\begin{Lemma}\label{lem:phizerodelta}
Consider $0 < \delta <1$ and the family of phase plane solutions of the homogeneous monostable equation~\eqref{ODE2}, namely
$$\phi '' + c \phi ' + g (\phi) = 0,$$
such that $\phi'(0)=0$ and $\phi(0)=\theta \in(0,1-\delta ]$. Then there exists $z_\theta<0$ such that $\phi(z_\theta)=0 < \phi ' (z_\theta)$ and $\phi (z) >0$ for all $z \in (z_\theta,0]$. Furthermore,
$$\underset{\theta\in(0,1-\delta ]}{\sup}\: |z_\theta|\leq K_\delta<+\infty.$$
\end{Lemma}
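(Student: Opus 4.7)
Plan: My plan is to separately establish (a) the existence of $z_\theta$ with the claimed sign of $\phi'(z_\theta)$, and (b) the uniform upper bound on $|z_\theta|$.

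For (a), the ODE gives $\phi''(0)=-g(\theta)<0$, so $\phi$ has a strict local maximum at $z=0$, and for $z<0$ near $0$ we have $\phi(z)\in(0,\theta)$ and $\phi'(z)>0$. In the region $\{\phi>0,\:\phi'>0\}$, the ODE yields $\phi''=-c\phi'-g(\phi)<0$, so $\phi$ is strictly concave; hence $\phi'$ is strictly increasing as $z$ decreases and cannot vanish before $\phi$ does. The energy $E(z):=\tfrac{1}{2}(\phi'(z))^2+G(\phi(z))$ with $G(s):=\int_0^s g(u)\,du$ satisfies $E'(z)=-c(\phi'(z))^2\leq 0$, so $E(z)\geq G(\theta)$ for all $z\leq 0$ and
$$(\phi'(z))^2\geq 2\bigl(G(\theta)-G(\phi(z))\bigr).$$
If $\phi>0$ on all of $(-\infty,0]$, then $\phi$ would decrease monotonically to some $L\in[0,\theta)$ as $z\to-\infty$, and for $z$ sufficiently negative $\phi'(z)$ would be bounded below by the positive constant $\sqrt{2(G(\theta)-G((L+\theta)/2))}$, forcing $\phi(z)\to-\infty$, a contradiction. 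Hence $\phi$ vanishes at some largest $z_\theta<0$, $\phi>0$ on $(z_\theta,0]$, and the energy bound at $z_\theta$ gives $\phi'(z_\theta)\geq\sqrt{2G(\theta)}>0$.

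For (b), continuous dependence of ODE solutions on initial data combined with the strict inequality $\phi'(z_\theta)>0$ (implicit function theorem) shows that $\theta\mapsto z_\theta$ is continuous on $(0,1-\delta]$, so it suffices to control the limit $\theta\to 0^+$. I rescale by setting $\Phi_\theta(z):=\phi(z)/\theta$, which has the same first zero $z_\theta$ below $0$ and satisfies
$$\Phi_\theta''+c\,\Phi_\theta'+\frac{g(\theta\Phi_\theta)}{\theta}=0,\qquad \Phi_\theta(0)=1,\quad\Phi_\theta'(0)=0.$$
By the $C^{1,r}_{\mathrm{loc}}$ regularity of $g$, one has $g(\theta\Phi)/\theta\to g'(0)\Phi$ as $\theta\to 0$, uniformly on bounded sets of $\Phi$, so by continuous dependence on parameters $\Phi_\theta\to\Phi_0$ locally uniformly in $z$, where $\Phi_0$ solves the linearized ODE $\Phi_0''+c\Phi_0'+g'(0)\Phi_0=0$ with $\Phi_0(0)=1$, $\Phi_0'(0)=0$. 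A case analysis in $c\lessgtr 2\sqrt{g'(0)}$ (complex roots, repeated negative real root given by $(1+\sqrt{g'(0)}z)e^{-\sqrt{g'(0)}z}$, or distinct negative real roots) shows that $\Phi_0$ has a finite first zero $z_0^{\mathrm{lin}}<0$ with $\Phi_0'(z_0^{\mathrm{lin}})>0$. A standard compactness argument then gives $z_\theta\to z_0^{\mathrm{lin}}$ as $\theta\to 0^+$, so $\theta\mapsto z_\theta$ extends continuously to the compact interval $[0,1-\delta]$ and is therefore bounded by some $K_\delta<+\infty$.

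The main subtlety is the limit $\theta\to 0^+$: a direct estimate $|z_\theta|\leq\int_0^\theta d\phi/\sqrt{2(G(\theta)-G(\phi))}$ stemming from the energy bound requires a delicate Taylor expansion of $g$ near $0$ to produce a $\theta$-uniform bound on the integral, whereas the rescaling cleanly reduces the question to the explicit linearized problem where $z_0^{\mathrm{lin}}$ is computed by hand.
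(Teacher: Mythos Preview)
Your proof is correct, but the paper takes a much more direct route. Rather than separating existence and the uniform bound, the paper exploits the key fact that $g(s)>\gamma_\delta\, s$ for all $s\in(0,1-\delta]$ (for some $\gamma_\delta>0$ depending only on $\delta$), and compares $\phi$ on the interval where $\phi\in(0,1-\delta]$ with the explicit linear oscillator $\psi''+\gamma_\delta\psi=0$, $\psi(0)=\theta$, $\psi'(0)=0$, i.e.\ $\psi(z)=\theta\cos(\sqrt{\gamma_\delta}\,z)$. A phase-plane comparison shows $\phi<\psi$ on $(z_\theta,0)$, so $\phi$ must vanish before $\psi$ does, giving at once the existence of $z_\theta$ and the explicit uniform bound $|z_\theta|\le K_\delta:=\pi/(2\sqrt{\gamma_\delta})$.

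Your energy argument for existence is a perfectly valid alternative, and the rescaling $\Phi_\theta=\phi/\theta$ with the linearized limit is a nice way to control the small-$\theta$ regime. The trade-off is that your approach requires three ingredients (energy, continuous dependence via the implicit function theorem, and a case analysis in $c$ for the linearized zero $z_0^{\mathrm{lin}}$), whereas the paper's Sturm-type comparison handles everything in one step and yields an explicit $K_\delta$. Note in particular that the paper's argument does not need to distinguish the cases $c\lessgtr 2\sqrt{g'(0)}$: dropping the first-order term $c\phi'$ in the differential inequality (which has the right sign since $\phi'>0$ on $(z_\theta,0)$) makes the comparison with the undamped oscillator work uniformly in $c\ge 0$.
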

\begin{proof}
The existence of $z_\theta$ is immediate from looking at the phase plane, so we only need to prove that the mapping $\theta \mapsto z_\theta$ is uniformly bounded in $ (0,1-\delta)$. We will do this by looking at the ODE on the interval~$[z_\theta,0]$. Note that on this interval, the function $\phi$ is increasing and thus lies in the range $[0,1-\delta]$. Then for $z \in (z_\theta, 0]$:
$$0 = \phi '' + c \phi ' + g( \phi)>\phi '' + \gamma_\delta \phi ,$$
where $\gamma_\delta >0$ is such that $g(s) > \gamma_\delta s$ for all $0 \leq s \leq 1 -\delta$. 
It follows that in the phase plane, the trajectory of $\phi$ lies strictly above (at least while it remains in the $\{p,p'>0\}$ part of the phase plane, which is the case if we consider the restriction of $\phi$ to the interval $[z_\theta,0]$) that of $\psi$ the solution of 
$$\psi''+\gamma_\delta \psi=0,$$
such that $\psi(0)=\phi(0)=\theta$ and $\psi'(0)=\phi'(0)=0$. In particular, it is straightforward that $\phi < \psi$ on $[z_\theta,0)$. Indeed, since $\phi '' (0) < \psi '' (0)$, it is clear that $\phi < \psi$ on some interval $(0-\eta,0)$ where $\eta >0$. Let us proceed by contradiction and assume that there exists $a \in [z_\theta,0)$ such that $\phi (a) = \psi (a)$, and $\phi(z) < \psi (z)$ for all $z \in (a,0)$. Then, as the trajectory of $\phi$ lies above that of $\psi$, we get that $\phi ' (a) > \psi '(a)$, which contradicts our choice of $a$. We conclude as announced that $\phi < \psi$ on $[z_\theta,0)$.

Now, clearly $\psi(z)=\theta\cos(\sqrt{\gamma_\delta}z)$ and there exists $K_\delta :=  \frac{\pi}{2 \sqrt{\gamma_\delta}}>0$ which does not depend on $\theta \in (0,1-\delta)$ such that $\psi(-K_\delta)=0$. Therefore, there exists $z_\theta \geq- K_\delta$ such that $\phi (z_\theta) = 0$ and $\phi (z) >0$ for all $z \in (z_\theta,0]$. In particular $\phi ' (z_\theta)\geq 0$ and, in fact, $\phi ' (z_\theta) > 0$ since $\phi \not \equiv 0$, which concludes the proof. Note though that $K_\delta\to+\infty$ as $\delta\to 0$, which of course is unavoidable since $1$ is solution of the homogeneous monostable equation.
\end{proof}

\begin{Lemma}\label{lem:energy3}
Under the same assumptions as in Proposition~\ref{th:boundexp}, let $u$ be the solution of \eqref{problemmonobis} with initial datum $u_0$, and denote $K >0 $ such that $support (u_0) \subset (-K,K)$.

Then for all $t\in(0,+\infty)$, $z\mapsto u(t,z)$ is nonincreasing in $[D_0,+\infty)$, where $D_0 :=K+K_\delta + A_\delta$ with $\delta$, $A_\delta$ given by Lemma~\ref{lem:energy1} and $K_\delta$ given by Lemma~\ref{lem:phizerodelta}.
\end{Lemma}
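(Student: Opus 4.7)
The plan is to argue by contradiction: I will suppose that for some $t_0>0$ there exist $D_0\leq z_1<z_2$ with $u(t_0,z_1)<u(t_0,z_2)$, and derive a contradiction with the standing non-spreading assumption. First, the Gaussian supersolution $\overline{u}_2(t,z)=\frac{e^{At}}{\sqrt{4\pi t}}\int_\R u_0(y)e^{-|z+ct-y|^2/(4t)}dy$ already used in the proof of Lemma~\ref{lem:energy1}, combined with the compact support of $u_0$, forces $u(t_0,z)\to 0$ as $z\to+\infty$. Hence the restriction of $u(t_0,\cdot)$ to $[z_1,+\infty)$ attains its supremum at some interior point $z_\ast>z_1\geq D_0$, at which $\partial_z u(t_0,z_\ast)=0$ and $\theta:=u(t_0,z_\ast)\leq 1-\delta$ by Lemma~\ref{lem:energy1}.

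I then invoke Lemma~\ref{lem:phizerodelta} with this $\theta\in(0,1-\delta]$ to produce the ODE solution $\phi$ of $\phi''+c\phi'+g(\phi)=0$ satisfying $\phi(0)=\theta$, $\phi'(0)=0$, $\phi(z_\theta)=0$ for some $z_\theta\in(-K_\delta,0)$, and $\phi>0$ on $(z_\theta,0]$. Translating and restricting to the maximal interval of positivity around $z_\ast$, and extending by $0$ outside, I obtain a function $\tilde\phi(z)=\phi(z-z_\ast)$ that is a (viscosity) stationary subsolution of~\eqref{problemmonobis}: at the left endpoint $z_\ast+z_\theta$ of its support the derivative jumps upward from $0$ to $\phi'(z_\theta)>0$, producing a convex corner at which no smooth test function can touch from above, so the subsolution condition is vacuously satisfied. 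Here the precise value $D_0=K+K_\delta+A_\delta$ is essential: it guarantees $z_\ast+z_\theta\geq D_0-K_\delta=K+A_\delta>0$, so the support of $\tilde\phi$ lies in $\{z\geq 0\}$, where $f(z,\cdot)=g(\cdot)$, so that $\tilde\phi$ is genuinely a stationary subsolution of the full equation on all of $\R$.

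The main obstacle is then to establish $\tilde\phi\leq u(t_0,\cdot)$ pointwise: at $z_\ast$ the two functions match at zeroth and first order, and the ordering of the second derivatives reduces to the sign of $\partial_t u(t_0,z_\ast)$, which I cannot determine a priori. I would bypass this by a sliding argument combined with scaling. The translates $\tilde\phi^s(z):=\tilde\phi(z-s)$ remain stationary subsolutions for $s\geq 0$, and since $\sup\tilde\phi^s=\theta>0$ while $u(t_0,z)\to 0$ at infinity, no sufficiently large shift can satisfy $\tilde\phi^s\leq u(t_0,\cdot)$; on the other hand, replacing $\tilde\phi^s$ by $\eta\tilde\phi^s$ with $\eta\in(0,1)$ small, the inequality does hold (for any $s$ with support in $\{z\geq 0\}$), because $u(t_0,\cdot)>0$ on $\R$ by strong maximum principle and $\eta\tilde\phi^s$ is compactly supported (or decaying). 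Choosing the critical $(\eta_\ast,s_\ast)$ yields $\eta_\ast\tilde\phi^{s_\ast}\leq u(t_0,\cdot)$ on $\R$.

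With this inequality, the parabolic comparison principle gives $u(t_0+t,z)\geq\underline{u}(t,z)$ where $\underline u$ solves~\eqref{problemmonobis} with initial datum $\eta_\ast\tilde\phi^{s_\ast}$. Since $\eta_\ast\tilde\phi^{s_\ast}$ is a stationary subsolution, $\underline u$ is nondecreasing in time and bounded above by $p_+^c$ via Lemma~\ref{lem:sup_cv}, so by standard parabolic estimates it converges locally uniformly to a nonnegative stationary solution $p$ of~\eqref{problemmonobis} with $p\geq\eta_\ast\tilde\phi^{s_\ast}$. The classification in Theorem~\ref{th:stationary} leaves $p$ as a ground state or $p_+^c$. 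Following the argument of Section~\ref{subsec:large_data} and the phase-plane analysis distinguishing $p_+^c$ from ground states -- and, if necessary, iterating the construction at later times to progressively increase the touching height -- I would conclude that $p\equiv p_+^c$, which forces $u$ to spread and contradicts the standing assumption, thereby proving the monotonicity claim.
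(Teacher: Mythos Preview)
Your approach has a genuine gap at the scaling step. You assert that $\eta_\ast\tilde\phi^{s_\ast}$ is a stationary subsolution of~\eqref{problemmonobis}, but this fails precisely in the setting the paper cares about. If $\phi$ solves $\phi''+c\phi'+g(\phi)=0$, then $\eta\phi$ satisfies
\[
(\eta\phi)''+c(\eta\phi)'+g(\eta\phi)=g(\eta\phi)-\eta g(\phi),
\]
and the right-hand side is nonnegative iff $g(\eta s)\geq \eta g(s)$ for $s\in[0,\theta]$, i.e.\ iff $s\mapsto g(s)/s$ is nonincreasing on that range. That is a KPP-type condition, explicitly \emph{not} assumed here; under a weak Allee effect one typically has $g(\eta s)<\eta g(s)$, so $\eta\tilde\phi$ with $\eta<1$ is a \emph{supersolution}, not a subsolution. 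Consequently the time-monotonicity of $\underline u$ is lost and the comparison step collapses. Even granting the ordering, the limiting stationary state $p$ could be a ground state rather than $p_+^c$: the argument of Section~\ref{subsec:large_data} requires the maximum of the subsolution to exceed $\theta_c$, whereas $\max \eta_\ast\tilde\phi^{s_\ast}=\eta_\ast\theta$ may be arbitrarily small. The vague ``iterating at later times'' does not repair this, since you have no mechanism forcing the touching height to increase.

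The paper circumvents the ordering problem entirely by a zero-number (Sturmian) argument. Rather than trying to place $\phi(\cdot-z_\ast)$ below $u(t_0,\cdot)$, it observes that the difference $h(t,z)=u(t,z)-\phi(z-z_\ast)$ solves a linear parabolic equation and has a \emph{degenerate} zero at $(t_0,z_\ast)$ by construction. Since $u_0$ is compactly supported, $h$ has only one or two sign changes for small times; Proposition~\ref{prop:chang_signR} then forces the number of sign changes to drop at $t_0$, yielding $u(t,\cdot)\geq\phi(\cdot-z_\ast)$ on the relevant interval for all $t>t_0$ with no scaling needed. In the case where $\phi$ stays positive on $[0,\infty)$ this already contradicts the Gaussian decay of $u$; in the case where $\phi$ vanishes at some $z_2>0$, the resulting compactly supported subsolution has maximum exactly $\theta$ (not $\eta\theta$), and one checks via the phase plane that this exceeds $\theta_c$, so the argument of Section~\ref{subsec:large_data} applies and yields spreading. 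The zero-number machinery is the missing ingredient in your attempt.
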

\begin{proof} We only consider the case when $u_0$ is of class $C^1$ and there exists $0<a< K$ such that $\partial_z u_0(a) < 0$ and $\partial_z u_0(z) \leq 0$ for $z \geq a$. Indeed, by continuity of the solution of \eqref{problemmonobis} with respect to the initial datum and noting that any compactly supported function $u_0$ can be approached by a sequence of functions satisfying these additional hypotheses, the lemma eventually follows in the general case.

Since $a >0$, one can use standard estimates and find some $\tau >0$ small enough such that $\partial_z u(t,a)<0$ for all $t \in (0,\tau)$. As $\partial_z u$ satisfies 
$$\partial_t (\partial_z u ) (t,z) = \partial_{zz} (\partial_z u ) (t,z) + c \partial_z (\partial_z u ) (t,z) + \partial_z u (t,z) g' (u (t,z)),$$
for all $t >0$  and $z \geq a$, one can apply the strong maximum principle to get that 
$$\forall t \in (0,\tau), \  \forall z \geq a , \quad \partial_z u<0 .$$
Now argue by contradiction and assume that there exist $t_1>0$, $z_1\geq D_0$ such that $\partial_z u(t_1,z_1)=0$. Let $\phi$ be the solution of the homogeneous ODE~\eqref{ODE2} with 
$$\phi(0)=u(t_1,z_1), \: \phi'(0)=0 = \partial_z u (t_1,z_1).$$
Letting $\theta := u(t_1,z_1) $, which by Lemma~\ref{lem:energy1} and our choice of $D_0 > A_\delta$ satisfies $\theta \leq 1 -\delta$, we know from Lemma~\ref{lem:phizerodelta} that there exists $z_\theta \in [-K_\delta,0)$ such that $\phi(z_\theta)=0$, $\phi ' (z_\theta) >0$ and $\phi >0$ in $(z_\theta,0]$. Moreover, it also follows from our choice of $D_0$ that $z_1+z_\theta \geq K + A_\delta > K$. On the other hand, $\phi$ may either be positive on $[0,+\infty)$ (and thus on $(z_\theta,+\infty)$), or admits some smallest $z_2 >0$ such that $\phi(z_2)=0$ and $\phi ' (z_2) <0$. 

While we treat those two cases separately below, both parts rely on a so-called `zero number' argument that we detail in Appendix \ref{A:number_zero}. The main idea is that, denoting by $h\not\equiv0$ the solution of a one-dimensional linear parabolic equation of the type
$$\partial_t h (t,z) = \partial_{zz} h (t,z)  + c h (t,z) + b(t,z) h (t,z), \quad t \in (t_1,t_2), \ z \in I ,$$
where $I$ is an interval, $c$ is a constant and $b$ a bounded function, then the number of sign changes $\mathcal{Z}_I [ h (t,\cdot) ]$ of $h (t,\cdot)$ on $I$ is a nonincreasing function of $t \in (t_1,t_2)$ and the zeros do not accumulate in $I$. We refer to Proposition~\ref{prop:chang_signR} in the appendix and to \cite{A,DM,matano} for more details.
\begin{itemize}
\item We first consider the case when $\phi (z)$ is positive in $(z_\theta,+\infty)$. Then, by standard phase plane analysis, it converges exponentially to 0 as $z \to +\infty$, and the exponential convergence rate is given by any of the two positive eigenvalues 
$$\lambda_{\pm} (c) = \frac{c \pm \sqrt{c^2 - 4g'(0)}}{2}$$
of the linearization of~\eqref{ODE2} around 0. As $u_0$ has a compact support included in the interval $(-K,K)$, and recalling that $z_1 + z_\theta \geq K+ A_\delta$, we know that
$$\mathcal{Z}_I [u_0(\cdot)-\phi(\cdot-z_1)]=1,$$
where $I=[z_1+z_\theta - \eta ,+\infty)$ with $\eta  \in (0,A_\delta)$ such that $\phi (z) < 0$ for all $z \in (z_\theta - \eta, z_\theta )$. Note that such a $\eta$ clearly exists since $\phi ' (z_\theta) >0$. Putting together the facts that $u_0 \equiv 0$ in $(K,+\infty)$, that $\partial_z u<0$ for all $t\in(0,\tau)$ and $z\geq z_1 + z_\theta - \eta \geq a$, and that one can show as in Section~\ref{sec:large_c} that, for all $t>0$ and $z \in \R$,
\begin{equation}\label{correct_1}
u (t,z) \leq \frac{e^{At}}{\sqrt{4\pi t}} \int_\R u_0 (y) e^{- \frac{|z+ct-y|^2}{4t}} dy \quad \mbox{ where } A>0,
\end{equation}
then one can check up to reducing $\tau >0$ that
$$\mathcal{Z}_I [u (\tau,\cdot)-\phi(\cdot-z_1)]=1.$$
Moreover, $h (t,z) := u(t,z) - \phi (z-z_1)$ satisfies the equation
\begin{equation}\label{eq:w1}	
\partial_t h (t,z) = \partial_{zz} h (t,z)  + c \partial_z h (t,z) + b(t,z) h (t,z),
\end{equation}
for all $t >0$ and $z \in I$, where 
$$b (t,z):= \left\{
\begin{array}{l}
\displaystyle \frac{g(u (t,z)) - g (\phi (z -z_1))}{u(t,z) - \phi (z-z_1)} \mbox{ if } u(t,z) \neq \phi (z-z_1),\vspace{3pt}\\
g' (u(t,z)) \mbox{ if } u(t,z) = \phi (z-z_1).
\end{array}\right.$$

Note also that, since $u(t,z) >0$ for all $t >0$ and $z \in \R$ by the strong maximum principle, it is clear that $h (t,z_1 + z_\theta -\eta) = u (t,z_1+z_\theta - \eta ) - \phi (z_\theta - \eta) >0$ for all $t \geq0$. Therefore, from Proposition~\ref{prop:chang_signR}, we infer that
$$t \in (0,+\infty) \mapsto \mathcal{Z}_I[h (t,\cdot)]$$
is a nonincreasing function and furthermore, whenever $h (t^*,\cdot)$ admits a degenerate zero in the interior of $I$ for some $t^* >0$, then 
$$\mathcal{Z}_I [h (s_1,\cdot)] > \mathcal{Z}_I [h (s_2,\cdot)]$$
for any $s_1 \in (0,t^*)$ and $s_2 > t^*$. Here we know that such a degenerate zero occurs at time~$t_1$ and point~$z_1$, and it follows that, for all $t>t_1$,
$$\mathcal{Z}_I [h (t,\cdot)] = 0 .$$
Using again the fact that $h (t,z_1 + z_\theta - \eta) >0$ for all $t>0$, we get more precisely that,
$$\forall t > t_1 , \ \forall z \geq z_1 + z_\theta - \eta, \quad u(t,z) \geq \phi (z - z_1).$$
However, from~\eqref{correct_1} and recalling that $\phi (z)$ decays exponentially as $z \to +\infty$, we have finally reached a contradiction.
\item Now consider the case when $\phi$ changes sign in $(0,\infty)$. More precisely, there are $z_\theta < 0 < z_2$ such that $\phi (z_\theta) = \phi (z_2) =0$, and $\phi (z) >0 $ for all $z \in (z_\theta,z_2)$. Moreover, as we already mentioned above, $ \phi' ( z_\theta) > 0 > \phi ' (z_2)$, hence there also exists some $\eta \in (0,A_\delta)$ such that $\phi (z) <0$ for all $z \in (z_\theta - \eta, z_\theta) \cup (z_2 , z_2 +\eta)$.

Proceeding as before, we let $h(t,z) := u (t,z) - \phi (z - z_1)$ which again solves~\eqref{eq:w1} for all $t>0$ and $z \in I := [z_1 + z_\theta - \eta, z_1 + z_2 + \eta]$, where~$b$ is defined in a similar fashion. Clearly, since $z_1 + z_\theta - \eta \geq K$, we have
$$\mathcal{Z}_I [h (0,\cdot)] = 2,$$
as well as $h(t,z_1 + z_\theta - \eta) >0$ and $h(t,z_1+ z_2 + \eta) >0$ for all $t>0$. Using again the behaviour of $u$ when $t$ is close to 0, we get as before that, up to reducing $\tau$, 
$$\mathcal{Z}_I [h (\tau,\cdot)] = 2.$$
Therefore, applying again Proposition~\ref{prop:chang_signR}, we get that the function $t \mapsto \mathcal{Z}_I [h (t,\cdot)]$ is nonincreasing on $(0,\infty)$. Furthermore, because $h (t_1, \cdot)$ admits a degenerate zero at the point $z_1$, we have that
$$\mathcal{Z}_I [h (t,\cdot)] \leq 1$$
for all $t  > t_1$. In fact, since $h (t,z_1+ z_\theta - \eta)$ and $h (t,z_1+z_2+\eta)$ are both positive for all $t >0$, we even get that
$$\forall t > t_1, \ \forall z \in I ,\quad u(t,z) \geq \phi (z -z_1).$$
Unlike in the previous case, this is not enough to reach a contradiction. Applying the comparison principle, we infer that $u(t_1 + t,z) \geq \underline{u} (t,z)$, where $\underline{u}$ denotes the solution of \eqref{problemmonobis} with initial datum
$$\tilde{\phi}(z) := 
\left\{
\begin{array}{l}
\phi (z-z_1) \mbox{ if } z \in [z_1 +z_\theta, z_1 + z_2],\vspace{3pt}\\
0 \mbox{ otherwise.}
\end{array}
\right.$$
Proceeding as in Section~\ref{subsec:large_data}, one can check that $\underline{u} (t,z)$ converges to $p_+^c (z)$ as $t \to +\infty$, locally uniformly with respect to $z \in \R$. This contradicts our assumption that the solution $u$ does not spread.
\end{itemize}
In both cases we have reached a contradiction. Lemma~\ref{lem:energy3} is proved.
\end{proof}

\begin{Lemma}\label{uepsilonthm}
Under the same assumptions as in Proposition \ref{th:boundexp}, let $u$ be the solution of \eqref{problemmonobis} with initial datum $u_0$. Then, for all $\epsilon>0$, there exists $Z_\epsilon \in \R$ such that 
$$\forall t \geq 0 , \ \forall z \geq Z_\varepsilon, \quad u(t,z)\leq\epsilon .$$
\end{Lemma}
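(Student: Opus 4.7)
I would argue by contradiction. Suppose the conclusion fails for some $\varepsilon>0$; then there exist sequences $t_n\geq 0$ and $z_n\to+\infty$ with $u(t_n,z_n)\geq\varepsilon$. Define the shifted functions $u_n(t,z):=u(t+t_n,\,z+z_n)$. Since $z_n\to+\infty$, for any fixed compact set of $(t,z)$ the argument $z+z_n$ eventually lies in $\{\geq 0\}$, so $u_n$ satisfies the homogeneous monostable equation
$$\partial_t u_n-\partial_{zz}u_n-c\,\partial_z u_n=g(u_n)$$
on that set, the unfavourable zone having receded off to $-\infty$.

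First I would rule out the case $t_n\to 0$. Since $u_0$ is compactly supported, $u_0(z_n)=0$ for $n$ large, so continuity of $u$ at $t=0$ would force $u(t_n,z_n)\to 0$, contradicting $u(t_n,z_n)\geq\varepsilon$. Up to extracting a subsequence, we may therefore assume $t_n\to T_\infty\in(0,+\infty]$. Standard interior parabolic estimates then provide, after a further extraction, a $C^{1,2}_{loc}$-limit $u_\infty$ defined on a time interval containing $[0,+\infty)$ and solving the homogeneous monostable equation on all of $\mathbb{R}$ in $z$.

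Three properties pass to the limit and drive the contradiction:
\begin{enumerate}[$(i)$]
\item $u_\infty(0,0)\geq\varepsilon$, by pointwise convergence;
\item $u_\infty(t,\cdot)$ is nonincreasing on all of $\mathbb{R}$, since Lemma~\ref{lem:energy3} gives monotonicity on $[D_0-z_n,+\infty)$ and $D_0-z_n\to-\infty$;
\item $u_\infty\leq 1-\delta$ on all of $[0,+\infty)\times\mathbb{R}$, since Lemma~\ref{lem:energy1} yields this bound on $[A_\delta-z_n,+\infty)$ and $A_\delta-z_n\to-\infty$.
\end{enumerate}
By monotonicity, $\ell_-(t):=\lim_{z\to-\infty}u_\infty(t,z)$ exists for each $t\geq 0$. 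Translating $u_\infty$ by $z\mapsto z-L$ and letting $L\to+\infty$, interior parabolic compactness identifies the limit as a spatially constant solution of the homogeneous equation, so $\ell_-$ solves the scalar ODE $\ell'=g(\ell)$ on $[0,+\infty)$. Since $\ell_-(0)\geq u_\infty(0,0)\geq\varepsilon>0$ and $g>0$ on $(0,1)$ with $g(1)=0$, $\ell_-$ is strictly increasing and tends to $1$ as $t\to+\infty$. This contradicts the uniform bound $\ell_-(t)\leq 1-\delta$ and proves the lemma.

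The main technical point I expect is the identification of $\ell_-$ as a genuine solution of the ODE, which relies on an additional translation argument together with interior parabolic regularity for bounded solutions; the spatial monotonicity provided by Lemma~\ref{lem:energy3} is essential here, as it is what produces the one-sided limit profile whose value at $-\infty$ can be controlled. The elimination of the degenerate case $t_n\to 0$ via the compact support of $u_0$ is a small but necessary preliminary.
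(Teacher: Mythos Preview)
Your argument is correct and takes a genuinely different route from the paper's. One small technical imprecision: the phrase ``continuity of $u$ at $t=0$'' does not quite justify $u(t_n,z_n)\to 0$ when both $t_n\to 0$ and $z_n\to+\infty$, since the point is escaping to spatial infinity; the clean justification is the Gaussian supersolution bound $u(t,z)\le \frac{e^{At}}{\sqrt{4\pi t}}\int_\R u_0(y)e^{-|z+ct-y|^2/4t}\,dy$ (used elsewhere in the paper), which gives $u(t,z)\to 0$ uniformly over bounded time intervals as $z\to+\infty$ since $u_0$ is compactly supported. With this in place, the compactness argument, the passage of properties $(i)$--$(iii)$ to the limit, and the ODE contradiction for $\ell_-$ are all sound.

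Compared with the paper: once the monotonicity of Lemma~\ref{lem:energy3} is in hand, the paper argues that $u(t_Z,\cdot)\ge\varepsilon$ on an arbitrarily long interval $[D_0,Z']$, places a Dirichlet box problem for the homogeneous monostable equation inside this interval, and uses Theorem~\ref{thm:interm_0} to find a compactly supported $u_{0,1}$ below the evolved box solution whose associated solution of \eqref{problemmonobis} spreads; comparison then forces $u$ itself to spread, contradicting the standing hypothesis. The case $c=c^*$ is handled separately via the supersolution~\eqref{nospread_almost} from Theorem~\ref{th:nospread}. Your approach instead passes to a limit at spatial infinity and obtains a contradiction from the scalar ODE $\ell'=g(\ell)$ against the $1-\delta$ ceiling of Lemma~\ref{lem:energy1}. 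The advantage of your route is that, once Lemmas~\ref{lem:energy1} and~\ref{lem:energy3} are available, it treats all $c\in[2\sqrt{g'(0)},c^*]$ uniformly and does not invoke the spreading subsolution $u_{0,1}$ of Theorem~\ref{thm:interm_0} or the ``no spreading'' hypothesis a second time; the paper's route, on the other hand, is more constructive and makes the link to spreading explicit.
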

\begin{proof}
When $c=c^*$, this comes from the proof of Theorem~\ref{th:nospread} in Section~\ref{sec:large_c} (see in particular the inequality~\eqref{nospread_almost}, where the right-hand side decays as $z \to +\infty$, uniformly with respect to $t>0$). When $c<c^*$ we argue by contradiction and assume that there exists $\epsilon>0$, such that for any $Z \in \R$, there exists $t_Z>0$ and $Z' >Z$ with
$$u(t_Z,Z')\geq \epsilon.$$
Choose $Z >  D_0$ where $D_0$ is given by Lemma~\ref{lem:energy3}, and then
$$\underset{D_0\leq z\leq Z'}{\inf}\:u(t_Z,z)\geq \epsilon.$$
We claim that, provided $Z'$ is large enough, this implies that spreading occurs.

First, let $v(t,z)$ be the solution of the homogeneous monostable equation 
$$\partial_t v = \partial_{xx} v + g (v),$$
with initial datum $v_0(x)=\epsilon\chi_{(-D_1,D_1)}(x)$, where $\chi$ denotes the characteristic function and $D_1 >0$. As follows from classical results~\cite{AW}, the function $v(t,x)$ converges as $ t \to +\infty$ to 1, locally uniformly with respect to $x \in \R$. In particular, there exists $T>0$ such that for all $x \in \R$,
$$v(T,x)> u_{0,1} (x),$$
where $u_{0,1} < p_+^c < 1$ is a compactly supported function, given by Theorem~\ref{thm:interm_0} so that $u_1 (t,z)$ the solution of \eqref{problemmonobis} with initial condition $u_{0,1}$ spreads in the sense of Definition~\ref{def:main}.

Now let $\tilde{v}$ be the solution of the homogeneous monostable equation but in a bounded domain with Dirichlet boundary conditions on the boundary, i.e.
\bee\begin{cases}
\partial_t\tilde{v}-\partial_{xx}\tilde{v}=g(\tilde{v}), &t\in(0,+\infty),\:x\in(-D_2,D_2), \vspace{3pt}\\
\tilde{v}(t,\pm D_2)=0,&t\in(0,+\infty), \vspace{3pt}\\
\tilde{v}(0,x)=v_0(x), &x\in(-D_2,D_2),
\end{cases}\eee
where $D_2 > D_1$. Since $\tilde{v}$ converges locally uniformly in time and space to $v$ as $D_2 \to +\infty$, there exists $D_2$ large enough such that $\tilde{v}(T,x)\geq u_{0,1} (x)$. 

Up to increasing $Z$ above, we can assume without loss of generality that
\begin{equation}\label{eqn:choice_Z}
Z ' - D_0 \geq 2 D_2 + c T .
\end{equation}
By construction, for all $z \in \R$,
$$u (t_Z, z) \geq v_0 (z - Z' + D_2),$$
and for all $t \in [0,T]$, we have
$$u(t_Z + t,Z' - 2 D_2 - c t) \geq 0 = \tilde{v} (t, -D_2) \quad \mbox{ and } \quad u (t_Z + t , Z' -ct) \geq 0 = \tilde{v} (t,D_2) .$$
Thanks to~\eqref{eqn:choice_Z}, one can check that $(t,z) \mapsto \tilde{v} (t, z - Z' + D_2 + ct)$ satisfies \eqref{problemmonobis} for all
$$(t,z) \in Q_T := \{ (t,z) \, | \ 0 < t < T \  \mbox{ and } \ Z' -2 D_2 - ct < z < Z' - ct \} .$$
Therefore, we can apply the comparison principle and conclude that, for all $(t,z) \in Q_T$,
$$u(t_Z + t ,z) \geq \tilde{v} (t,z- Z' + D_2 + ct).$$
In particular, for all $z \in \R$,
$$u(t_Z + T,z) \geq u_{0,1} (z - Z' + D_2 + cT).$$
Using again the fact that $ Z' - D_2 - cT >0$, it is straightforward that $u_1 (t,z - Z' + D_2 + cT)$ is a subsolution of \eqref{problemmonobis} (recalling that $u_1(t,z)$ is the solution of \eqref{problemmonobis} with initial datum $u_{0,1}$). Another application of the comparison principle and of Lemma~\ref{lem:sup_cv} leads to the conclusion that $u$ spreads in the sense of Definition~\ref{def:main}, which again contradicts our assumption. We proved the lemma.
\end{proof}
Now we can prove Proposition \ref{th:boundexp}.
Let us first assume that $c \in (2 \sqrt{g'(0)},c^*]$, and choose $\gamma >0$ such that $c > 2 \sqrt{g'(0) +\gamma}$. Let then $\epsilon>0$ be small enough so that $g ' (u)<g'(0)+\gamma$ for all $u\in[0,\epsilon]$. Define 
$$v(z)=\epsilon e^{-\lambda_\gamma (c) (z-Z_\varepsilon)},$$
where $Z_\epsilon >0$ comes from Lemma~\ref{uepsilonthm}. Noting that $\lambda_\gamma (c)$ is the largest root of $\lambda^2 - c \lambda + g'(0) + \gamma$ and from our choice of $\varepsilon$, we get that for all $z\geq Z_\varepsilon$,
\begin{align*}
v_t-v_{zz}-cv_z-g(v)&=-\lambda_\gamma (c)^2v+c \lambda_\gamma (c) v-g(v) \geq 0.
\end{align*}
Moroever, up to increasing $Z_\varepsilon$ and without loss of generality, we have that $u_0 \equiv 0 \leq v$ in $(Z_\varepsilon, + \infty)$. Using Lemma~\ref{uepsilonthm}, we also know that 
$$u(t, Z_\varepsilon )\leq \varepsilon = v(Z_\varepsilon),$$
for all $t \geq 0$. Therefore, we conclude applying the parabolic maximum principle that 
$$\forall z\geq Z_\varepsilon, \ t\geq 0, \quad u(t,z)\leq v(z) ,$$
and the wanted inequality immediately follows.

We now consider the case $c=2\sqrt{g'(0)}$ and define, for all $z\geq0$,
$$v(z)=(1+\sqrt{z})e^{-\frac{c}{2}z},$$
Let also $Z >0$ be such that, for all $z \geq Z$, one has $0 \leq v (z) \leq 1$ and hence
$$| g(v) - g' (0) v  | \leq   \int_0^v \left|g'(s) - g'(0)\right| ds  \leq C_r v^{1+ r},$$
where $C_r >0 $ comes from the $C^{1,r}$-regularity of $g$. Then one can check that for all $z \geq Z$,
\begin{align*}
v_t-v_{zz}-cv_z-g(v)&=g'(0)v-g(v)+\frac{1}{4}\frac{e^{-\frac{c}{2}z}}{z\sqrt{z}}\\
				&\geq -C_r v^{1+r} +\frac{1}{4}\frac{e^{-\frac{c}{2}z}}{z\sqrt{z}},\\
				& \geq e^{-\frac{c}{2} z} \left( - C_r  (1 + \sqrt{z})^{1+r} e^{-\frac{cr}{2} z} + \frac{1}{4 z \sqrt{z}}\right)\\
 				& >0 ,
\end{align*}
and the last inequality holds up to increasing $Z$ (depending on $r$).

On the other hand, since 
$$\min_{z \in [0,Z]} v(z) >0 ,$$
it follows from Lemma~\ref{uepsilonthm} that there exists $Z' >0 $ such that, for all $t\geq 0$ and $z \in [Z' - Z, Z']$,
$$u(t, z ) \leq v(z - Z' +Z) .$$
Up to increasing $Z'$ and since $u_0$ has compact support, we can also assume that $u_0 (z) = 0 \leq v (z - Z' + Z)$ for all $z \in [Z' - Z, +\infty)$.
Therefore, we apply the comparison principle again and conclude that for all $t \geq 0$ and $z \geq Z' - Z $,
$$u(t,z) \leq v (z - Z' + Z) =  \left((1 + \sqrt{z-Z' + Z} \right) e^{-\frac{c}{2} (z - Z' +Z)}.$$
Letting $Z_r = Z' - Z$, we reach the wanted inequality and the proposition is proved.

\end{proof}

\subsection{Convergence in the moving frame}\label{sec:convcritic}

In this subsection we prove the following proposition, which is crucial in order to obtain the sharp dichotomy phenomenon as stated in our main results Theorem \ref{th:regime_sharp} and Theorem \ref{th:critspeed}:
\begin{Prop}\label{prop:conv}
Let $u_0$ be any nonnegative, bounded and compactly supported initial datum. Then the associated solution $u(t,z)$ of \eqref{problemmonobis} either spreads, goes extinct, or is grounding in the sense of Definition~\ref{def:main}.
\end{Prop}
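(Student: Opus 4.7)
The case when $u$ spreads being immediate from Definition~\ref{def:main}, I assume $u$ does not spread, and by the previous sections this reduces the proof to $c \in [2\sqrt{g'(0)}, c^*]$. The goal is to show that $u(t,\cdot)$ converges as $t \to +\infty$, locally uniformly and in fact uniformly on $\R$, either to $0$ or (when $c < c^*$) to the critical ground state $p_{\alpha^*_c}$.

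Standard parabolic Schauder estimates away from $z = 0$, the $C^1$ matching at $z = 0$, and Lemma~\ref{lem:sup_cv} imply that $\{u(t,\cdot) : t \geq 1\}$ is precompact in $C^1_{loc}(\R)$, so every element of the $\omega$-limit set $\Omega(u_0)$ is a nonnegative bounded stationary solution of~\eqref{problemmonobis}. By Theorem~\ref{th:stationary} each such element is $0$, $p_+^c$, or a ground state $p_\alpha$ with $\alpha \in (0, \alpha^*_c]$. Lemma~\ref{uepsilonthm} (valid under the no-spreading assumption) gives uniform-in-time smallness of $u(t,z)$ as $z \to +\infty$, and thus excludes $p_+^c$. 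Non-critical ground states are excluded via Proposition~\ref{th:boundexp}: when $c > 2\sqrt{g'(0)}$, any admissible $\gamma > 0$ yields $\lambda_\gamma(c) > c/2 > \lambda_-(c)$, and passing to the pointwise limit along an approximating sequence in $u(t,z) \leq \varepsilon e^{-\lambda_\gamma(c)(z - Z_\varepsilon)}$ contradicts the asymptotics $p_\alpha(z) \sim A e^{-\lambda_-(c) z}$ of Proposition~\ref{prop:exp_decay} (with $A > 0$ for $\alpha < \alpha^*_c$); the critical case $c = 2\sqrt{g'(0)}$ is analogous, the $(1 + \sqrt{z - Z_r})e^{-cz/2}$ bound being incompatible with the linearly growing prefactor of non-critical ground states. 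Therefore $\Omega(u_0) \subset \{0, p_{\alpha^*_c}\}$.

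The main obstacle is to exclude oscillation between $0$ and $p_{\alpha^*_c}$, i.e. to show that $\Omega(u_0)$ is a singleton. I plan to apply the zero-number Proposition~\ref{prop:chang_signR} from Appendix~\ref{A:number_zero}. Since $u(t,\cdot)$ and $p_{\alpha^*_c}$ are both $C^1$ across $z = 0$, the function $h_\xi(t,z) := u(t,z) - p_{\alpha^*_c}(z - \xi)$ solves on each open half-line a linear parabolic equation with bounded coefficients, and the zero-number lemma applies on any bounded interval. Choose $\xi > 0$ small enough that $p_{\alpha^*_c}(z) - p_{\alpha^*_c}(z - \xi)$ has exactly one transversal sign change (located near the unique maximum of $p_{\alpha^*_c}$), and fix a closed bounded interval $\overline{I}$ containing this crossing in its interior and on whose endpoints the difference is nonzero. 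Arguing by contradiction, if $\{0, p_{\alpha^*_c}\} \subset \Omega(u_0)$ then I may extract interlacing sequences $s_n < t_n < s_{n+1} \to +\infty$ with $u(s_n,\cdot) \to 0$ and $u(t_n,\cdot) \to p_{\alpha^*_c}$ locally uniformly. For $n$ large $h_\xi(s_n,\cdot) < 0$ throughout $\overline{I}$, giving $\mathcal{Z}_I[h_\xi(s_n,\cdot)] = 0$, whereas transversality of the limiting crossing is preserved under small perturbations, giving $\mathcal{Z}_I[h_\xi(t_n,\cdot)] \geq 1$. This contradicts the non-increase in $t$ of $\mathcal{Z}_I[h_\xi(t,\cdot)]$, and we conclude that $\Omega(u_0)$ reduces to a single element.

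Locally uniform convergence is finally promoted to uniform convergence on $\R$: the behaviour as $z \to -\infty$ is controlled by the exponential bound $u(t,z) \leq A e^{\mu_c z}$ appearing in the proof of Lemma~\ref{lem:sup_cv}, which is also the common exponential profile of every bounded stationary solution on $\{z \leq 0\}$; while as $z \to +\infty$, Lemma~\ref{uepsilonthm} handles the extinction case, and the combination of Proposition~\ref{th:boundexp} with the decay of Proposition~\ref{prop:exp_decay} handles the grounding case. This yields the full grounding/extinction dichotomy of Definition~\ref{def:main}.
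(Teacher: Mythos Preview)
Your proposal has a real gap at the start of the second paragraph: you assert that precompactness of the orbit in $C^1_{loc}$ implies every element of $\Omega(u_0)$ is a stationary solution. This does not follow. Precompactness only guarantees that each $w \in \Omega(u_0)$ is the time-zero slice of some \emph{entire} solution $u_\infty$ of~\eqref{problemmonobis}; nothing forces $u_\infty$ to be time-independent. On bounded intervals this would follow from the Zelenyak--Matano convergence theorem, but on the whole line that result is unavailable, and proving it here is precisely the heart of the matter. The paper handles this via the Du--Matano zero-number argument: for $w \in \Omega(u_0) \setminus \{0\}$ one introduces the ODE solution $p$ matching $w$ and $w'$ at $z=0$, checks (Claim~\ref{claim:Z1}) that $\mathcal{Z}_I[u(t,\cdot)-p]$ is finite and hence eventually constant, and then infers that the entire limit $u_\infty$ cannot have a degenerate intersection with $p$ at the origin unless $u_\infty \equiv p$, so that $w$ is stationary. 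This is the step you have skipped; your exponential-decay argument eliminating non-critical ground states is correct, but it only becomes relevant once stationarity of $w$ is established.

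Your oscillation argument in the third paragraph is also shaky and in any case unnecessary. The shifted profile $p_{\alpha^*_c}(\cdot - \xi)$ does \emph{not} solve the stationary equation~\eqref{ODE1} (it solves the equation with $f(z-\xi,\cdot)$ in place of $f(z,\cdot)$), so $h_\xi$ fails to satisfy a homogeneous linear equation on the strip $0<z<\xi$; and even after restricting $\overline{I}$ to $(\xi,+\infty)$, you have not verified the boundary hypothesis $h_\xi(t,\partial\overline{I})\neq 0$ for all $t$ required by Proposition~\ref{prop:chang_signR}. The paper avoids all of this by simply observing that $\Omega(u_0)$ is connected (an immediate consequence of orbit precompactness), so once $\Omega(u_0)\subset\{0,p_{\alpha^*_c}\}$ is known it is automatically a singleton.
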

Let us remind the reader that by grounding we may only mean uniform convergence to the critical ground state~$p_{\alpha^*_c}$. We will still assume in this subsection that $c \in [2 \sqrt{g'(0)},c^*]$, even though we do not state it explicitly in the above proposition: the reason is that Proposition~\ref{prop:conv} clearly holds true for any $c \in [ 0 , 2 \sqrt{g'(0)}) \cup (c^*, +\infty)$, as immediately follows from Theorems~\ref{th:spread_slow} and~\ref{th:nospread}. 

Note also that the critical ground state $p_{\alpha^*_c}$ does not exist when $c < 2\sqrt{g'(0)}$ or $c \geq c^*$, in which case the only possible outcomes are spreading and extinction. In particular, when $c = c^*$, the combination of Proposition~\ref{prop:conv} and Theorem~\ref{th:nospread} implies that extinction always occurs, which completes the proof of Theorem~\ref{th:regime}.
\begin{proof}
We only consider initial data $u_0$ such that spreading does not occur. Thus Proposition~\ref{th:boundexp} applies, along with the lemmas involved in its proof. In particular, it follows from Lemma~\ref{uepsilonthm} and the simple fact that $Ae^{\mu_c z}$ is a supersolution of \eqref{problemmonobis} for all $z \leq 0$ (where $\mu_c >0$ was introduced in Section~\ref{sec:stationarysol_sub1} and $A>0$ can be chosen arbitrarily large), that for all $\varepsilon >0$, we have up to increasing $Z_\varepsilon$:
$$\forall t \geq 0 , \ \forall |z | \geq Z_\varepsilon, \quad u(t,z) \leq \varepsilon .$$
Since $p_{\alpha^*_c} (z) \to 0$ as $z \to \pm \infty$, it is straightforward that we only need to prove that $u (t,z)$ converges locally uniformly with respect to $z$ to either 0 or~$p_{\alpha^*_c}$ as $t \to +\infty$.

Let us first introduce the corresponding $\omega$-limit set:
\be\label{omegalimitset}\Omega(u_0)=\underset{t>0}{\cap}\overline{\left\{u(\tau,\cdot),\: \tau\geq t\right\}},\ee
with $u$ the solution of \eqref{problemmonobis} such that $u(0,z)=u_0(z)$ for all $z\in\R$. Here the closure in~\eqref{omegalimitset} is taken in the locally uniform topology. Since the set $\{ t \geq 1 \, | \ u(t,\cdot ) \}$ is relatively compact with respect to the locally uniform topology, the $\omega$-limit set is not empty and our goal now rewrites as proving that $\Omega (u_0)$ is reduced to a singleton which is either $\{0\}$ or $\{ p_{\alpha^*_c}\}$.

Before we proceed, let us also mention the well-known fact that the relative compactness of the set $\{ t \geq 1 \, | \ u(t,\cdot ) \}$ implies that $\Omega (u_0)$ is connected. This will prove useful below.\\

Assume now that $c \in [ 2 \sqrt{g'(0)}, c^* ]$. We will prove that the $\omega$-limit set consists only of stationary solutions of~\eqref{problemmonobis}, and follow an argument of Du and Matano~\cite{DM}. 

Choose some $w\in\Omega(u_0)$, and $(t_n)_n$ such that $t_n\to+\infty$ and $u(t_n,\cdot)\to w(\cdot)$ locally uniformly as $n \to +\infty$. Here we will assume that $w \not \equiv 0$ and prove that $w \equiv p_{\alpha^*_c}$ (which is a contradiction when $c =c^*$). Thanks to standard parabolic estimates and up to extraction of a subsequence, the sequence $u_n (t,z)=u(t+t_n,z)$ converges locally uniformly with respect to both $t$ and $z$ to some nonnegative solution $u_\infty$ of~\eqref{problemmonobis} such that $u_\infty (0,z) = w (z)$ for all $z \in \R$. In particular, by the strong maximum principle and from our choice of $w \not \equiv 0$, we have that $w (z) >0$ for all $z \in \R$.

Consider now $p$ the solution of the ODE
$$p ''+c p'+f(z,p)=0,$$
such that $p(0)=u_\infty (0,0) = w (0) >0$ and $p '(0)=\partial_z u_\infty (0,0) = w ' (0)$. 
Let us denote
$$z_1 = \inf \{  z < 0 \, | \ p (z ) > 0 \} \in [-\infty,0),$$
and 
$$z_2 = \sup \{  z > 0 \, | \ p (z ) > 0 \} \in (0 , +\infty].$$
Note that $z_1$ (respectively $z_2$) may be infinite if $p$ does not change sign on $\R_-$ (respectively $\R_+$).

Since clearly $u_0 \not \equiv p$, one can define the number of sign changes of $u(t,\cdot) - p ( \cdot)$ on $I$ the closure of $(z_1,z_2)$, and claim that is finite for all $t >0$. As before we denote by $\mathcal{Z}_I [ u(t,\cdot)- p (\cdot)]$ the number of sign changes of $u(t,\cdot)- p (\cdot)$ in $I$. \begin{claim}\label{claim:Z1}
For all $t >0$,
$$\mathcal{Z}_I [ u(t,\cdot)- p (\cdot)] < +\infty.$$
\end{claim}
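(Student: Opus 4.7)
The plan is to verify the claim by showing that for every $\tau>0$ the function $h(\tau,\cdot):=u(\tau,\cdot)-p(\cdot)$ admits only finitely many sign changes on $I$, and then invoke the monotonicity of the zero number from Appendix~\ref{A:number_zero} to propagate this finiteness up to the desired time $t>\tau$. Since $\tau$ is arbitrary, this yields $\mathcal{Z}_I[h(t,\cdot)]<+\infty$ for every $t>0$.

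The first step is the standard linearization: $h$ satisfies
\[ h_t=h_{zz}+ch_z+b(t,z)h \qquad \text{on } I^{\circ}\setminus\{0\},\]
with $b(t,z)=-1$ on $\{z<0\}$ and $b(t,z)=\int_0^1 g'(\sigma u+(1-\sigma)p)\,d\sigma$ on $\{z>0\}$; the local boundedness of $u$ and $p$ yields $b\in L^\infty$. The crux of the argument is then to control $h(\tau,\cdot)$ near the endpoints of $I$. On the one hand, the Gaussian supersolution bound
\[ u(\tau,z)\leq \frac{e^{A\tau}}{\sqrt{4\pi\tau}}\int_\R u_0(y)\,e^{-\frac{(z+c\tau-y)^2}{4\tau}}\,dy\]
from the proof of Lemma~\ref{lem:sup_cv}, combined with the compact support of $u_0$, forces $u(\tau,z)$ to decay faster than any exponential as $|z|\to\infty$. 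On the other hand, on $\{z\leq 0\}$ the ODE solution is the explicit combination $p(z)=Ae^{\mu_c z}+Be^{-\nu_c z}$, while on $\{z\geq 0\}$ a phase-plane analysis of $p''+cp'+g(p)=0$ shows that, on its positivity set, $p$ either converges to a steady state, diverges to $+\infty$, or vanishes at a finite endpoint of $I$. Combining these: near any infinite endpoint of $I$ we have $h(\tau,\cdot)<0$ eventually (either because $u(\tau,\cdot)\to 0$ while $p$ stays bounded away from $0$ or tends to $+\infty$, or because super-exponential decay beats the exponential decay of $p$); near any finite endpoint $z_i$ of $I$ we have $p(z_i)=0$ while the strong maximum principle gives $u(\tau,z_i)>0$, so $h(\tau,\cdot)>0$ on a one-sided neighborhood. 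Since $h(\tau,\cdot)$ is smooth on the interior of $I$ and of constant sign outside a compact subinterval of $I^{\circ}$, it can change sign only finitely often.

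The main obstacle I anticipate is the discontinuity of the coefficient $b(t,z)$ and of $p_{zz}$ at $z=0$, which precludes a direct application of the classical zero-number results for smooth coefficients. This is precisely the reason to invoke the more general version collected in Appendix~\ref{A:number_zero}, which only requires $b\in L^\infty$, together with the fact that $h$ remains $C^1$ in $z$ globally (including across the interface $z=0$) as a difference of two functions with the same regularity. Once this is granted, the monotonicity of the zero number between times $\tau$ and $t$ concludes the argument.
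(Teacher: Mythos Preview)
Your approach mirrors the paper's: control the sign of $h=u-p$ near the endpoints of $I$ (positive at finite endpoints because $p(z_i)=0<u(\tau,z_i)$; negative at infinite endpoints by comparing the super-exponential Gaussian decay of $u(\tau,\cdot)$ against the explicit form of $p$ on $\{z\le 0\}$ and its phase-plane behaviour on $\{z\ge 0\}$), and then conclude finiteness of $\mathcal{Z}_I$. The endpoint analysis is correct and matches the paper's.

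There is, however, a real gap in your final step. You write that ``since $h(\tau,\cdot)$ is smooth on the interior of $I$ and of constant sign outside a compact subinterval of $I^{\circ}$, it can change sign only finitely often''. Smoothness alone does \emph{not} give this: a $C^\infty$ function such as $e^{-1/z^2}\sin(1/z)$ (extended by $0$ at $z=0$) has infinitely many sign changes accumulating at an interior point. What actually rules this out is the non-accumulation statement in Proposition~\ref{prop:chang_signR}: for a bounded solution of a linear parabolic equation with $L^\infty$ coefficient $b$, the zeros of $h(t,\cdot)$ do not accumulate in $I$ at any positive time. This is the step where the parabolic structure---not just regularity---is essential, and it is exactly how the paper closes the argument. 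You already invoke Proposition~\ref{prop:chang_signR} for the monotonicity of the zero number; you need to also invoke its non-accumulation clause here.

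Once you do, the propagation step from $\tau$ to $t>\tau$ via monotonicity becomes redundant: you will have established finiteness at every positive time directly, as the paper does.
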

\begin{proof}[Proof of Claim~\ref{claim:Z1}] Let us first consider the case when $I$ is a bounded interval. As $u(t,z_1)- p (z_1 )>0$ and $u(t,z_2)- p (z_2)>0$ for all $t>0$, and since $u(t,z)- p (z )$ satisfies a linear parabolic equation on $(z_1,z_2)$ in a similar fashion as in the proof of Lemma~\ref{lem:energy3}, we know from Proposition~\ref{prop:chang_signR} that the number of zeros is finite which proves the claim. 

Next assume that $I = \R$. Since $f (z,s) = -s$ for all $z < 0$ and $s \in \R$, it is straightforward that
$$\forall z \leq 0 , \quad p (z) = A e^{\mu_c z} + B e^{-\nu_c z},$$
where $\mu_c$ is defined in Section~\ref{sec:stationarysol}, $\nu_c := \frac{c + \sqrt{c^2 +4}}{2} >0$, and either $B>0$ or $B=0 <A$. On the other hand, from a standard phase plane analysis of~\eqref{ODE2} (see the proof of Theorem~\ref{th:stationary} and Proposition~\ref{prop:exp_decay}), we have either $\liminf p (z) \geq 1$ as $z \to +\infty$, or $p (z) \to 0$ as $z \to +\infty$ at an exponential rate that is
$$\limsup \left| \frac{p ' (z)}{p (z)} \right| \leq  \frac{c + \sqrt{c^2 - 4 g'(0)}}{2} \quad \mbox{ as } \ z \to +\infty .$$
In any case, using the fact that $u_0$ has compact support, it is straightforward (see again the supersolution~$\overline{u}_2$ in Section~\ref{sec:large_c}) that for any $t>0$, there exists some $Z >0$ such that
$$\forall |z| \geq Z, \quad u(t,z) < p (z).$$
Using Proposition \ref{prop:chang_signR} and the fact that zeros of $u- p$ do not accumulate, we again reach the wanted conclusion.

The remaining cases $-\infty < z_1 < z_2 = +\infty$ and $-\infty = z_1 < z_2 < + \infty$ easily follow from the same arguments and we omit the details.
\end{proof}

Let us go back to the proof of Proposition~\ref{prop:conv}. We now prove that $u_\infty \equiv p$. By Proposition \ref{prop:chang_signR}, $\mathcal{Z}_I [ u(t,\cdot) - p (\cdot )]$ is also nonincreasing with respect to time, which implies that it is constant for large times and using again Proposition \ref{prop:chang_signR}, $u(t,\cdot)-p(\cdot)$ has only simple zeros on $I$ for large time $t$. One may then apply Lemma~2.6 from~\cite{DM} to reach the wanted conclusion. However, we include the argument for the sake of completeness.

From our choice of $p$ such that $p (0) = u_\infty (0,0)$ and assuming by contradiction that $u_\infty \not \equiv p$, we can also apply Proposition \ref{prop:chang_signR} to get that the zeros of $u_\infty (0,\cdot) - p(\cdot)$ do not accumulate. In particular, there exist $\tau>0$ and $\epsilon>0$ such that $[-\varepsilon,\varepsilon] \subset I$ and 
$$u_\infty(t,\pm\epsilon)\neq 0,\quad \forall\:t\in[-\tau,\tau].$$
Moreover, as $u_\infty (0,\cdot) - p (\cdot)$ has a degenerate zero at $0$,
$$\mathcal{Z}_{[-\varepsilon,\varepsilon] }[u_\infty(-\tau,\cdot)-p(\cdot)]>\mathcal{Z}_{[-\varepsilon,\varepsilon] } [u_\infty(\tau,\cdot)-p(\cdot)].$$
Because degenerate zeros may only appear at discrete times, one may also assume up to reducing $\tau$ that the zeros of $u_\infty(\pm\tau,\cdot)-p(\cdot)$ are all simple in $[ - \varepsilon, \varepsilon]$. Besides, by standard parabolic estimates, the sequence $u_n  (t,z) = u(t+t_n,z)$ and its spatial derivative $\partial_z u_n (t,z)$ converge locally uniformly to $u_\infty (t,z)$ and $\partial_z u_\infty (t,z)$ respectively. It follows that
$$\mathcal{Z}_{[-\varepsilon,\varepsilon]}[u_n(-\tau,z)-p(z)]>\mathcal{Z}_{[-\varepsilon,\varepsilon]}[u_n(\tau,z)-p(z)]$$
and 
$$u_n(t,\pm\epsilon)\neq 0,\quad \forall\:t\in[-\tau,\tau],$$
for $n$ large enough. This implies that, for any large $n$, $u_n(t,\cdot)-p(\cdot)$ has a degenerate zero in the interval $(-\varepsilon,\varepsilon)$ for some time $t \in (-\tau,\tau)$. However, as explained above, Claim~\ref{claim:Z1} and Proposition~\ref{prop:chang_signR} imply that $u (t,\cdot) - p (\cdot)$ has only simple zeros for large times, which is a contradiction. We conclude that $u_\infty \equiv p$, and it immediately follows that $w (\cdot ) \equiv u_\infty (0,\cdot)$ is a (bounded and positive) stationary solution.

Now recall that $u$ satisfies the exponential bound from Proposition~\ref{th:boundexp} when~$c=2\sqrt{g'(0)}$ or $c\in(2\sqrt{g'(0)},c^*]$, and so does $w$ by passing to the limit as $t \to +\infty$. On the other hand, it follows from Proposition~\ref{prop:exp_decay} that $p_{\alpha^*_c}$ (when it exists) is the only bounded and positive stationary solution satisfying the same inequality, so that $w \equiv p_{\alpha^*_c}$. Therefore, when $c = c^*$, we have reached another contradiction (in this case by Theorem~\ref{th:stationary} there is no critical ground state) and conclude that $\Omega (u_0) = \{0\}$. If $c\in[2 \sqrt{g'(0)} ,c^*)$, then $\Omega (u_0) \subset \{0,p_{\alpha^*_c} \}$ and, since it is connected, either $\Omega (u_0) = \{0\}$ or $\Omega (u_0) = \{p_{\alpha^*_c} \}$. Proposition~\ref{prop:conv} is proved.\end{proof}

\section{Sharp transitions phenomena}\label{sec:sharpf}

We have already shown that the solution of \eqref{problemmonobis} either spreads, goes extinct, or converges uniformly to the critical ground state $p_{\alpha^*_c}$, see Proposition~\ref{prop:conv} above. In this section, we will prove Theorems~\ref{th:regime_sharp} and~\ref{th:critspeed}. Both theorems highlight the fact that, whether we fix the speed while the initial datum varies, or whether we fix the initial datum while the speed varies, there is a sharp transition from spreading behaviour to extinction behaviour.

\subsection{Sharp dichotomy}\label{sec:sharpdich}

Let us first prove Theorem~\ref{th:regime_sharp}. Here we assume that $2 \sqrt{g'(0)} < c^*$ and fix $c \in [2 \sqrt{g'(0)},c^*)$. We also introduce a family $(u_{0,\sigma})_{\sigma > 0}$ of nonnegative, compactly supported and bounded initial data such that
$$\forall \sigma ' > \sigma, \quad u_{0,\sigma'} \geq  u_{0,\sigma} \ \mbox{ and } \ u_{0,\sigma '} \not \equiv u_{0,\sigma},$$
and
$$\forall \sigma > 0, \quad \sigma ' \to \sigma \Rightarrow \| u_{0,\sigma '} - u_{0,\sigma} \|_{L^1 (\R)} \to 0.$$
We also denote, for each $\sigma > 0$, by $u_\sigma$ the solution of \eqref{problemmonobis} with initial datum $u_{0,\sigma}$. Then define the (possibly empty) sets
$$\Sigma_0=\{\sigma > 0 \, | \  u_\sigma(t,z)\to0, \text{ as } t\to+\infty \text{ uniformly in }z\},$$
and
$$\Sigma_1=\{\sigma > 0 \, | \ u_\sigma(t,z)\to p_+^c(z), \text{ as } t\to+\infty \text{ locally uniformly in }z\}.$$
We also define
$$\sigma_* := \left\{ \begin{array}{l}
\sup \Sigma_0 ,  \vspace{3pt}\\
0 \mbox{ if } \Sigma_0 = \emptyset ,
\end{array}\right.
\quad \sigma^* := \left\{ \begin{array}{l}
\inf \Sigma_1 ,  \vspace{3pt}\\
+\infty \mbox{ if } \Sigma_1 = \emptyset .
\end{array}\right.$$
By the comparison principle, we have that:
$$\forall t \geq 0 , \ \forall z \in \R , \quad \sigma' > \sigma \Rightarrow u_{\sigma '} (t,z) \geq u_\sigma (t,z).$$
In particular, it is straightforward that $ \sigma_* \leq \sigma^*$ and
$$(0,\sigma_* ) \subset \Sigma_0 \subset (0,\sigma_*], \quad (\sigma^* ,+\infty) \subset \Sigma_1 \subset [\sigma^*, +\infty).$$
\begin{claim}\label{claim_sigma1}
The sets $\Sigma_0$ and $\Sigma_1$ are open. In particular, if $\sigma \in [\sigma_*, \sigma^*] \setminus (\{0\}\cup\{+\infty\})$, then $u_\sigma (t,z)$ converges as $t \to +\infty$ to $p_{\alpha^*_c} (z)$ uniformly with respect to $z$.
\end{claim}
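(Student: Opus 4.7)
The plan decomposes the claim into two openness statements and a conclusion. Once $\Sigma_0$ and $\Sigma_1$ are shown to be open, the ordering $(0,\sigma_*)\subset\Sigma_0$ and $(\sigma^*,+\infty)\subset\Sigma_1$ already established by the comparison principle forces $\Sigma_0=(0,\sigma_*)$ and $\Sigma_1=(\sigma^*,+\infty)$ (as $(0,+\infty)$-subsets); any $\sigma\in[\sigma_*,\sigma^*]\setminus(\{0\}\cup\{+\infty\})$ then lies in neither $\Sigma_0$ nor $\Sigma_1$, so Proposition~\ref{prop:conv} leaves grounding as the only possibility, i.e. uniform convergence of $u_\sigma(t,\cdot)$ to $p_{\alpha^*_c}$. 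The common technical input for both openness statements will be continuity of the solution map: since the initial data are uniformly supported in a fixed compact set and uniformly bounded in $L^\infty$, the map $u_{0,\sigma'}\mapsto u_{\sigma'}(T,\cdot)$ is continuous from $L^1(\R)$ to $L^\infty_{\mathrm{loc}}(\R)$ at every fixed $T>0$, by parabolic regularization combined with the Gaussian heat-kernel majorant used throughout Section~\ref{sec:regime}.

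For openness of $\Sigma_1$, I would fix $\sigma\in\Sigma_1$, so that $u_\sigma(t,\cdot)\to p_+^c$ locally uniformly, and invoke the compactly supported subsolution $\tilde\phi$ constructed in Section~\ref{subsec:large_data}, whose time-increasing solution spreads and which satisfies $\min_\R(p_+^c-\tilde\phi)>0$. Choosing $T$ large, $u_\sigma(T,z)>\tilde\phi(z)$ would hold on the (compact) support of $\tilde\phi$ with a strict positive margin. The parabolic continuity above then ensures that $u_{\sigma'}(T,\cdot)\geq\tilde\phi(\cdot)$ still holds on $\R$ for $\sigma'$ close to $\sigma$, and the subsolution property of $\tilde\phi$ together with Lemma~\ref{lem:sup_cv} gives $\sigma'\in\Sigma_1$. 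This part is routine.

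The harder part is openness of $\Sigma_0$. Given $\sigma\in\Sigma_0$, the plan is to enclose $u_\sigma(T,\cdot)$ below some translate $\hat\phi(\cdot-z_0)$ of the generalized supersolution $\hat\phi=\min(p_\alpha,p_{\alpha^*_c})$ of Section~\ref{sec:regime}, whose associated time-decreasing solution tends uniformly to $0$; parabolic continuity then transfers the enclosure to $u_{\sigma'}(T,\cdot)$ for nearby $\sigma'$ and yields $\sigma'\in\Sigma_0$. To produce the enclosure I would combine three uniform-in-$t$ bounds on $u_\sigma$: the bound $u_\sigma(t,z)\leq Ae^{\mu_c z}$ for $z\leq 0$ from~\eqref{newlem_1} (matching the $e^{\mu_c z}$ left tail of $\hat\phi$ once $z_0$ is chosen sufficiently negative), the exponential bound of Proposition~\ref{th:boundexp} at $+\infty$ (applicable because $u_\sigma$ does not spread), and the Gaussian majorant $u_\sigma(T,z)\leq C_T\,e^{-(z+cT-K)^2/(4T)}$ coming from the heat-kernel convolution of the compactly supported initial datum. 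The $L^\infty$-smallness $\|u_\sigma(T,\cdot)\|_\infty\to 0$ handles the peak region of $\hat\phi(\cdot-z_0)$. The main obstacle I anticipate lies at $+\infty$ when $c>2\sqrt{g'(0)}$: Proposition~\ref{th:boundexp} only gives decay at rate $\lambda_\gamma<\lambda_+$, strictly slower than the $\lambda_+$ decay of $p_{\alpha^*_c}$ in the right tail of $\hat\phi$, so the exponential bound alone cannot dominate $\hat\phi(\cdot-z_0)$ for $z$ very large. This forces a two-scale comparison: use the Proposition~\ref{th:boundexp} bound for $z$ in an intermediate window $[Z_\varepsilon,z^*(T)]$ and switch to the Gaussian majorant for $z\geq z^*(T)$, exploiting that the Gaussian is super-exponentially small and its centre $-cT+K$ lies far to the left. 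In the degenerate case $c=2\sqrt{g'(0)}$ the bound of Proposition~\ref{th:boundexp} has the form $(1+\sqrt{z-Z_r})e^{-(c/2)(z-Z_r)}$, which matches exactly the $\lambda_+=\lambda_-=c/2$ decay of $p_{\alpha^*_c}$, so the two-scale construction is not needed there.
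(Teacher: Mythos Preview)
Your treatment of $\Sigma_1$ is correct and coincides with the paper's. The conclusion via Proposition~\ref{prop:conv} is also fine. The difficulty lies entirely in the openness of $\Sigma_0$, and there your proposed barrier $\hat\phi(\cdot-z_0)=\min\{p_\alpha,p_{\alpha^*_c}\}(\cdot-z_0)$ runs into a genuine obstacle that your two-scale fix does not resolve.

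The issue is the right-tail decay mismatch you yourself flag. For $c>2\sqrt{g'(0)}$ the right tail of $\hat\phi(\cdot-z_0)$ is governed by $p_{\alpha^*_c}$ and decays at rate $\lambda_+$, strictly faster than the rate $\lambda_\gamma$ supplied by Proposition~\ref{th:boundexp}. Your remedy is to switch to the Gaussian heat-kernel majorant for $z\geq z^*(T)$; but a short computation (using $\lambda_+^2-c\lambda_+=-g'(0)$) shows that the Gaussian beats $e^{-\lambda_+ z}$ only beyond $z^*(T)\sim T\bigl(\sqrt{c^2-4g'(0)}+2\sqrt{A-g'(0)}\bigr)$, which grows linearly in $T$. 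On the intermediate window $[Z_\varepsilon,z^*(T)]$ you are then forced to rely on the Proposition~\ref{th:boundexp} bound, and there the inequality $\varepsilon e^{-\lambda_\gamma(z-Z_\varepsilon)}\leq A e^{-\lambda_+(z-z_0)}$ fails near the right end of the window for any fixed $z_0$. Replacing the Proposition bound by the $L^\infty$-smallness of $u_\sigma(T,\cdot)$ does not help either: you would need $\|u_\sigma(T,\cdot)\|_\infty$ to be exponentially small in $T$ (of order $e^{-\lambda_+ z^*(T)}$), and extinction gives no such rate. The degenerate case $c=2\sqrt{g'(0)}$ is not better: the bound $(1+\sqrt{z-Z_r})e^{-(c/2)(z-Z_r)}$ carries a polynomial prefactor, whereas $p_{\alpha^*_c}$ decays like $Be^{-(c/2)z}$ with no prefactor, so again the barrier is too small at infinity.

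The paper sidesteps this by choosing a barrier that decays \emph{slower} than the Proposition~\ref{th:boundexp} bound, rather than faster. For $c>2\sqrt{g'(0)}$ it takes $\overline{u}_1(z)=\epsilon\,e^{-\frac{c}{2}z}$ with $\epsilon<\alpha^*_c$ small enough that $g'(s)\leq c^2/4$ on $[0,\epsilon]$; this is a stationary supersolution of \eqref{problemmonobis} on all of $\R$, and since $c/2<\lambda_\gamma$ one gets $u_{\sigma_*}(t,z)\leq\tfrac12\overline{u}_1(z)$ for all $t\geq0$ and $z\geq Z$ directly from Proposition~\ref{th:boundexp}. A further half-line comparison on $(0,T)\times(Z,+\infty)$ (using the boundary value at $z=Z$, obtained by locally uniform continuity in $\sigma$, and the vanishing initial data on $[Z,+\infty)$) transfers this to $u_{\sigma_*+\delta}$. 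Since $\overline{u}_1(0)=\epsilon<\alpha^*_c=p_{\alpha^*_c}(0)$, Proposition~\ref{prop:conv} forces extinction. In the case $c=2\sqrt{g'(0)}$ the paper uses $\overline{u}_2(z)=M(1+z^{3/4})e^{-(c/2)z}$, whose prefactor $z^{3/4}$ is larger than the $z^{1/2}$ of Proposition~\ref{th:boundexp}, and the same scheme goes through. The moral is that to trap the solution you want a barrier just fat enough to dominate the uniform-in-time tail bound, not the sharp ground state itself.
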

\begin{proof}[Proof of Claim~\ref{claim_sigma1}]
Let us first take $\sigma^* \in \Sigma_1$, and prove that for any $\delta >0$ small enough, then $\sigma^* - \delta \in \Sigma_1$. Let also some compactly supported initial datum $0 \leq u_{0,1} < p_+^c$ be such that the associated solution spreads, using Theorem~\ref{thm:interm_0}. 
Since $u_{\sigma^*} (t,z)$ converges locally uniformly to $p_+^c (z)$ as $t \to +\infty$, and by continuity of the solution $u_\sigma$ with respect to $\sigma$ in the locally uniform topology, it is straightforward that one cand find $T>0$ large enough and $\delta_0 >0$ such that, for all $0 < \delta \leq \delta_0$:
$$\forall z \in \R , \quad u_{\sigma^* - \delta} (T,z) \geq u_{0,1} (z).$$
Applying the comparison principle and recalling Lemma~\ref{lem:sup_cv}, we infer that $[\sigma^* - \delta_0, \sigma^*] \subset \Sigma_1$.

Next, we take $\sigma_* \in \Sigma_0$ and prove that for $\delta>0$ small enough, then $\sigma_* + \delta \in \Sigma_0$. 
Here the difficulty lies in the fact that (unlike in the previous case or in the related work~\cite{DM}), the trivial state 0 is only stable with respect to perturbations which decay fast enough at infinity. Thus, we need to use the fact that the initial data $u_{0,\sigma}$ have compact support, which we do through the application of Proposition~\ref{th:boundexp}.

First consider the case when $ 2 \sqrt{g'(0)} < c < c^*$. Then there exists $0 < \epsilon < \alpha^*_c$ small enough so that, for all $0 \leq s \leq \epsilon$,
$$g'(s) \leq \frac{c^2}{4}.$$
It easily follows that the function $\overline{u}_1 (z) = \epsilon e^{-\frac{c}{2} z}$ satisfies, for all $z \geq 0$,
$$\overline{u}_1 '' + c \overline{u}_1 ' + g(\overline{u}_1) \leq 0.$$
Moreover, for $z \leq 0$, we also have that
$$\overline{u}_1 '' + c \overline{u}_1 ' - \overline{u}_1 \leq 0.$$
In other words, $\overline{u}_1$ is a supersolution of \eqref{problemmonobis}. Recall that the solution of \eqref{problemmonobis} is bounded for any bounded and nonnegative initial datum, and therefore one can fix $M > \|u_{\sigma_* +1}\|_{L^\infty (\R_+ \times \R)}$. Now let $Z$ be large enough so that $\overline{u}_1 (-Z) \geq M$, and so that applying Proposition~\ref{th:boundexp} where $\lambda_\gamma (c) > \frac{c}{2}$:
$$\forall t \geq 0, \ \forall |z| \geq Z, \quad u_{\sigma_*} (t,z) \leq \frac{\overline{u}_1 (z)}{2}.$$
Then, since we assumed that $\sigma_* \in \Sigma_0$, there exists some $T>0$ such that
$$\forall |z| \leq Z , \quad u_{\sigma_*} (T,z) \leq \frac{\overline{u}_1 (z)}{2}.$$
We now claim that $u_{\sigma_* + \delta} (T,\cdot ) \leq \overline{u}_1 (\cdot)$ for any small enough $\delta \in (0,1)$. When $z \leq -Z$, the inequality simply follows from our choice of $Z$ and $M$ above. Then, by continuity (in the locally uniform topology) of solutions of \eqref{problemmonobis} with respect to the initial datum, for any $\delta \in (0,1)$ small enough we have that
$$\forall | z | \leq Z, \quad u_{\sigma_* + \delta} (T,z) \leq \overline{u}_1 (z),$$
but also that
\begin{equation}\label{simple1}
\forall 0 \leq t \leq T , \ u_{\sigma_* + \delta} (t,Z) \leq \overline{u}_1 (Z).
\end{equation}
Without loss of generality, one may increase $Z$ so that the support of $u_{0,\sigma_* + 1}$ is included in $(-Z,Z)$: in particular, for any $ \delta \in (0,1)$, the support of $u_{0,\sigma_* + \delta}$ is also included in $(-Z,Z)$ and $u_{0,\sigma_*+ \delta} (z) \leq \overline{u}_1 (z)$ for all $z \geq Z$. Together with~\eqref{simple1} and applying the comparison principle on the domain $(0,T) \times (Z,+\infty)$, this implies that for any $\delta \in (0,1)$ small enough,
$$\forall z \geq Z , \quad u_{\sigma_* + \delta} (T,z) \leq \overline{u}_1 (z).$$
As announced we have obtained that $u_{\sigma_* + \delta} (T,z) \leq \overline{u}_1 (z)$ for all $z \in \R$, and applying again the comparison principle the same inequality holds for all times larger than $T$. Finally, recalling that $p_{\alpha^*_c} (0) = \alpha^*_c > \epsilon = \overline{u}_1 (0)$, it follows from Proposition~\ref{prop:conv} that $u_{\sigma_* +\delta} (t,\cdot)$ converges uniformly to 0 as $t \to +\infty$, i.e. $\sigma_* + \delta \in \Sigma_0$.

Now assume that $c = 2\sqrt{g'(0)} < c^*$. Fix $M> \|u_{\sigma_* +1}\|_{L^\infty (\R_+ \times \R)}$ as above. By a similar computation as in the proof of Proposition~\ref{th:boundexp}, the function 
$$\overline{u}_2 (z) := M (1+ z^{3/4}) e^{-\frac{c}{2} z}$$
satisfies
$$\overline{u}_2 '' + c \overline{u}_2 ' + g (\overline{u}_2) \leq 0$$
for all $z \geq Z_0$ where $Z_0 >0$ is large enough (depending only on $M$ and $g$), as well as
$$\overline{u}_2 '' + c \overline{u}_2 ' - \overline{u}_2 \leq 0$$
for all $z >0$. It follows that $\overline{u}_2 (z + Z_0)$ is a supersolution of \eqref{problemmonobis} on the half line $\{ z > - Z_0\}$. Then, proceeding similarly as above, using Proposition \ref{th:boundexp}, one can find some $T >0$ so that, for any small enough $\delta >0$,
$$\forall z \geq - Z_0, \quad u_{\sigma_* + \delta} (T,z) \leq \overline{u}_2 (z + Z_0).$$
Then, we apply the comparison principle on $(T,+\infty) \times (-Z_0,+\infty)$, thanks to the fact that $u_{\sigma_* + \delta} (t,-Z_0) \leq M = \overline{u}_2 (0)$ for all $t >0$, and get that 
$$\forall t \geq T ,\ \forall z \geq - Z_0, \quad u_{\sigma_* + \delta} (t,z) \leq \overline{u}_2 (z + Z_0).$$
Since $\overline{u}_2 (z) \to 0$ as $z \to +\infty$, we may increase $Z_0$ without loss of generality so that $\overline{u}_2 (Z_0) < \alpha^*_c = p_{\alpha^*_c} (0)$. Finally, it follows from the above inequality and Proposition~\ref{prop:conv} that $u_{\sigma_* + \delta}$ goes extinct as time goes to infinity. Thus $\sigma_* + \delta \in \Sigma_0$ for any small enough $\delta$.

We have now proved that $\Sigma_0$ and $\Sigma_1$ are open, and hence $[\sigma_*, \sigma^*] \cap (\Sigma_0 \cup  \Sigma_1) = \emptyset $. It immediately follows from Proposition~\ref{prop:conv} that for any real number $\sigma \in [\sigma_* ,\sigma^*] \setminus (\{0\}\cup\{+\infty\})$, the solution $u_\sigma$ converges uniformly to $p_{\alpha^*_c}$ as time goes to~$+\infty$.
\end{proof}
The sharpness of the threshold between extinction and spreading, namely the fact that $\sigma_* = \sigma^*$, and hence Theorem~\ref{th:regime_sharp}, immediately follows from Claim~\ref{claim_sigma1} and the next lemma:
\begin{Lemma}\label{lem:lambdacritground}
If there exists $\sigma^*>0$ such that $u_{\sigma^*}(t,z)\to p_{\alpha^*_c}(z)$ as $t\to+\infty$ uniformly in $z$, then for all $\sigma<\sigma^*$, $u_\sigma$ vanishes.
\end{Lemma}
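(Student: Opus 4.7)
The strategy is to argue by contradiction, exploiting the dynamical instability of the critical ground state $p_{\alpha^*_c}$. Suppose some $\sigma_1\in(0,\sigma^*)$ is such that $u_{\sigma_1}$ does not vanish. By the comparison principle, $u_{\sigma_1}(t,\cdot)\leq u_{\sigma^*}(t,\cdot)\to p_{\alpha^*_c}$ uniformly, and since $p_{\alpha^*_c}<p_+^c$ locally uniformly, spreading is ruled out for $u_{\sigma_1}$. Proposition~\ref{prop:conv} therefore forces $u_{\sigma_1}$ itself to ground, namely $u_{\sigma_1}(t,\cdot)\to p_{\alpha^*_c}$ uniformly as $t\to+\infty$. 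The strict ordering of the family together with the strong maximum principle applied to $u_{0,\sigma^*}-u_{0,\sigma_1}\not\equiv 0$ yields $w(t,z):=u_{\sigma^*}(t,z)-u_{\sigma_1}(t,z)>0$ for every $t>0$ and $z\in\R$.

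I would then linearize around the critical ground state. The function $w$ satisfies
$$w_t - w_{zz} - c\,w_z = a(t,z)\,w, \qquad a(t,z) := \int_0^1 f_u\bigl(z, u_{\sigma_1}(t,z) + s\,w(t,z)\bigr)\,ds,$$
with $a$ uniformly bounded and, since both $u_{\sigma_1}$ and $u_{\sigma^*}$ tend uniformly to $p_{\alpha^*_c}$, $a(t,z)\to A(z):=f_u(z,p_{\alpha^*_c}(z))$ as $t\to+\infty$. The limiting operator $\mathcal{L}v=v_{zz}+cv_z+A(z)v$, viewed in the weighted Hilbert space $L^2(\R,e^{cz}\,dz)$, is self-adjoint; its essential spectrum is contained in $(-\infty,g'(0)-c^2/4]\subseteq(-\infty,0]$ (since $c\geq 2\sqrt{g'(0)}$), and a Krein-Rutman-type argument — using the sign-changing function $p_{\alpha^*_c}'$ as a test competitor giving Rayleigh quotient formally equal to zero — yields a strictly positive principal eigenvalue $\lambda_1>0$ with positive eigenfunction $\phi_1$. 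The positivity of $\lambda_1$ is the analytic counterpart of the fact that $p_{\alpha^*_c}$ is the dynamical separator between the basins of attraction of $0$ and of $p_+^c$.

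To conclude, I would pass to the limit through the normalization $w_n(t,z):=w(t_n+t,z)/w(t_n,0)$ with $t_n\to+\infty$. Parabolic regularity and the Harnack inequality, together with the uniform-in-time exponential bounds of Proposition~\ref{th:boundexp} at $+\infty$ and the $e^{\mu_c z}$ super-solution at $-\infty$, should yield (along a subsequence) locally uniform convergence to a positive entire solution $W$ of $W_t=\mathcal{L}W$ on $\R\times\R$ with $W(0,0)=1$. The projection $P(t):=\int W(t,z)\,\phi_1(z)\,e^{cz}\,dz$ then satisfies $P'(t)=\lambda_1 P(t)$ by self-adjointness of $\mathcal{L}$, so that $W$ (and hence the ratio $w(t_n+t,0)/w(t_n,0)$ for large $n$) grows like $e^{\lambda_1 t}$ in forward time; this is incompatible with $w\to 0$ uniformly, giving the desired contradiction. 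The principal obstacle is making this last spectral step fully rigorous on the unbounded weighted line: one must simultaneously secure the existence, positivity and isolation of $\lambda_1$ despite the discontinuity of $f$ at $z=0$; ensure enough decay of $\phi_1$ and of $W$ at $\pm\infty$ so that $P(t)$ is finite and legitimately differentiable; and control the limiting extraction of $W$ in presence of the vanishing normalization $w(t_n,0)\to 0$.
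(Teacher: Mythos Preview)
Your proposal takes a genuinely different route from the paper. The paper avoids spectral theory entirely and instead establishes, for any $\sigma_1<\sigma_2$, a \emph{shifted} strict comparison: there exist $t_0,\delta,\epsilon>0$ with
\[
u_{\sigma_1}(t,z)<u_{\sigma_2}(t+\delta,z+a)\qquad\text{for all }t\geq t_0,\ z\in\R,\ 0<a<\epsilon.
\]
This comes from the strong maximum principle on a large ball, short-time monotonicity in~$t$ of the solutions outside the initial support, and the monotonicity $f(z,u)\leq f(z+a,u)$ which lets the spatial shift propagate by comparison. Taking $\sigma_2=\sigma^*$ and letting $t\to\infty$ gives $\limsup_{t}u_\sigma(t,z)\leq p_{\alpha^*_c}(z+a)$ for some fixed $a>0$; a spatially shifted copy of $p_{\alpha^*_c}$ cannot dominate $p_{\alpha^*_c}$ itself, so Proposition~\ref{prop:conv} forces extinction. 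This is entirely elementary and sidesteps every analytic obstacle you list.

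Your strategy has the right dynamical intuition, but the gaps you flag are real, and one of your heuristics is incorrect in this setting. Because $f(z,s)$ jumps at $z=0$, differentiating the stationary equation in~$z$ produces a Dirac contribution, so $p_{\alpha^*_c}'$ does \emph{not} satisfy $\mathcal{L}(p_{\alpha^*_c}')=0$ even weakly; the more natural zero-mode candidate is the one-sided derivative $\partial_\alpha p_\alpha|_{\alpha=\alpha^*_c}$, but by Proposition~\ref{prop:exp_decay} the non-critical ground states decay only like $e^{-\lambda_-(c)z}$ with $\lambda_-(c)<c/2$, so this object lies outside $L^2(e^{cz}dz)$ and yields no Hilbert-space eigenvalue. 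Establishing $\lambda_1>0$ (and its isolation when $c=2\sqrt{g'(0)}$, where the essential spectrum touches~$0$) therefore needs an independent argument. More seriously, the extraction of a nontrivial limit $W$ from $w_n=w(t_n+\cdot,\cdot)/w(t_n,0)$ requires a Harnack control uniform in~$n$ on the whole line; Proposition~\ref{th:boundexp} bounds each $u_{\sigma_i}$ but says nothing about their \emph{difference} after renormalization, and there is no evident barrier preventing the mass of $w(t_n,\cdot)$ from escaping to $+\infty$, where the weight $e^{cz}$ diverges and the projection $P(t)$ may fail to be finite.
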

\begin{proof}
This proof is largely inspired by that of Lemmas 4.4 and 4.5 in \cite{DM}. We first prove that for any $\sigma_1 < \sigma_2$, there exists $t_0$,	 $\delta$ and $\epsilon$ positive constants such that
\begin{equation}\label{eq:DM1}
u_{\sigma_1}(t,z)<u_{\sigma_2}(t+\delta,z+a), \quad \forall \:t\geq t_0,\:z\in\R, \: 0<a<\epsilon.
\end{equation}
Using the same argument as \cite{DM} [Proposition~1.8] and the fact that $s\mapsto f(z,s)$ is Lipschitz-continuous, uniformly with respect to $z\in\R$, one can show that there exists $t_0 >0$ and $K >0$ such that the supports of $u_{0,\sigma_1}$ and $u_{0,\sigma_2}$ are included in $(-K+1, K-1)$, and such that $u_{\sigma_1}$ and $u_{\sigma_2}$ are both increasing in $t$ for all $t \in (0,2t_0)$ and $|z| \geq K$. We know from the strong maximum principle that $u_{\sigma_1}(t,z)<u_{\sigma_2}(t,z)$ for all $t>0$, $z\in\R$, and thus by the $C^1$-regularity of solutions of \eqref{problemmonobis} for positive times, one can check that for any $0 < \epsilon <1$ and $\delta>0$ small enough,
\be\label{inequ1}
\forall |z|<K, \: 0<a<\epsilon , \quad u_{\sigma_1}(t_0 ,z)<u_{\sigma_2}(t_0 +\delta,z+a)  .\ee
Let us now show the same inequality for $|z| \geq K$. Using the fact that $u_{\sigma_2}$ is increasing in $t$ for $t \in (0,2 t_0)$ and $|z| \geq K$, and assuming that $\delta < t_0$ without loss of generality, we get that 
$$ \forall t\in[0,t_0], \:z=\pm K, \quad u_{\sigma_1}(t,z)<u_{\sigma_2}(t+\delta,z).$$ 
Thanks to the continuity of the solutions up to time $t=0$ at $z = \pm K$ (which lie outside of the initial support), we get up to reducing $\varepsilon$ that 
$$\forall t\in[0,t_0], \:z=\pm K,\ 0<a<\epsilon , \quad u_{\sigma_1}(t,z)<u_{\sigma_2}(t+\delta,z+a) .$$
Moreover, recalling that the support of $u_{0,\sigma_1}$ is included in $(-K+1,K-1)$, we get that
$$\forall | z | \geq K , \ 0<a < \epsilon , \quad u_{\sigma_1}(0,z)= 0 <u_{\sigma_2}(\delta,z+a).$$
Then using the comparison principle on each interval $(K,+\infty)$ and $(-\infty,-K)$, one has that
$$\forall |z | \geq K , \ 0<a < \epsilon , \quad u_{\sigma_1}(t_0,z)<u_{\sigma_2}(t_0+\delta,z+a).$$
Putting this together with~\eqref{inequ1}, we have that $u_{\sigma_1}(t_0,z)<u_{\sigma_2}(t_0+\delta,z+a)$  for all $z\in\R$. Then, using the comparison principle and the fact that $f(z,u)\leq f(z+a,u)$ for all $a>0$, we get for all $0< a < \varepsilon$, $t \geq t_0$, $z\in\R$ that 
$$u_{\sigma_1}(t,z)<u_{\sigma_2}(t+\delta,z+a).$$
We have now proved~\eqref{eq:DM1}. Now under the assumptions of Lemma~\ref{lem:lambdacritground}, choose $\sigma_2=\sigma^*$ and $\sigma_1=\sigma<\sigma^*$, and infer that there is some $a>0$ such that, for all $z \in \R$,
$$\underset{t\to+\infty}{\lim}u_{\sigma}(t,z)\leq p_{\alpha_c^*}(z+a).$$
Together with Proposition~\ref{prop:conv}, this easily implies that $u_\sigma$ converges uniformly to 0 as time goes to infinity and the lemma is proved.
\end{proof}

\subsection{Sharp speed}\label{sec:critspeed}
In this section we turn to the proof of Theorem \ref{th:critspeed} and show that for a fixed initial condition~$u_0$ there exists a sharp speed $c(u_0)\in[2\sqrt{g'(0)},c^*]$ such that spreading occurs when $c<c(u_0)$ and vanishing occurs when $c>c(u_0)$.

\begin{proof}[Proof of Theorem \ref{th:critspeed}]
Assume that the initial condition $u_0$ is bounded, nonnegative, non trivial and compactly supported. From Theorem~\ref{th:regime} we already know that for all $c<2\sqrt{g'(0)}$, the solution in the moving frame converges to $p^c_+$ locally uniformly as time goes to infinity, and that for all $c\geq c^*$ the solution converges to 0 uniformly. From this we can define two thresholds $2\sqrt{g'(0)}\leq\underline{c}\leq\overline{c}\leq c^*$ by
\begin{eqnarray*}
\underline{c} & :=& \max \{ c \geq 0 \, | \ \forall c' < c , \ u(t,z) \to p_+^c (z) \mbox{ as } t \to +\infty \mbox{ locally uniformly, i.e. spreading occurs} \}, \vspace{3pt}\\
\overline{c} & := & \min \{ c \geq 0 \, | \ \forall c' > c , \ u(t,z) \to 0 \mbox{ as } t \to +\infty \mbox{ uniformly, i.e. extinction occurs} \}.
\end{eqnarray*}
In addition, one can notice that the solution $u(t,x)$ in the non moving frame is decreasing with respect to~$c\geq 0$. Indeed if $u_{c_1}$, respectively $u_{c_2}$, are solutions of the original problem \eqref{problemmono}  with $c= c_1$, respectively $c= c_2$, such that $c_1<c_2$ and $u_{c_1}(0,x)=u_{c_2}(0,x) = u_0 (x)$ for all $x\in\R$, one can conclude using the parabolic maximum principle and the monotonicity of $f(z,u)$ with respect to $z$ that for all $t>0$, $x\in\R$,
$$u_{c_1}(t,x)>u_{c_2} (t,x).$$
With the same abuse of notations as before, we denote by $u_{c_1} (t,z)$ and $u_{c_2} (t,z)$ the solutions of \eqref{problemmonobis} with respectively $c = c_1$ and $c=c_2$ (note however that, since $c_1 \neq c_2$, both functions are derived from different change of variables). The inequality above then rewrites as
$$\forall t \geq 0 , \ \forall z \in \R , \quad u_{c_2} (t,z ) < u_{c_1} (t, z+ (c_2 -  c_1) t ).$$
It immediately follows that, if $u_{c_1} (t,z)$ goes extinct in the sense of Definition~\ref{def:main} and converges uniformly to~0 as $t\to +\infty$, then $u_{c_2} (t,z)$ also goes extinct as $t \to +\infty$. Similarly, if $u_{c_1} (t,z)$ converges uniformly to $p_{\alpha^*_{c_1}} (z)$, then the above inequality implies that $u_{c_2} (t,z)$ converges locally uniformly to 0, hence uniformly by Proposition~\ref{prop:conv}. Therefore, recalling from Proposition~\ref{prop:conv} that the solution either goes extinct, spreads or converges uniformly to the critical ground state, one can already conclude that $\underline{c} = \overline{c}$. From now on, we denote this speed by $c (u_0)$. It only remains to investigate how the solution of \eqref{problemmonobis} behaves when $c = c (u_0)$. 

Let us first prove that, when $c = c(u_0)$, then either grounding or extinction occurs, in the sense of Definition~\ref{def:main} (note that part~$(iii)(b)$ of Theorem~\ref{th:critspeed} immediately follows). Proceed by contradiction and assume that $u_{c (u_0)} (t,z)$ the solution of \eqref{problemmonobis} with $c=c (u_0)$ spreads and converges locally uniformly to $p^+_{c (u_0)} (z)$. In particular, by Theorem~\ref{th:regime}, $c(u_0)<c^*$. Using the fact that $p^+_{c (u_0)} (z) \to 1$ as $z \to +\infty$, then for any $D>0$ and $\epsilon >0$, one can find some $Z>0$ and $T>0$ large enough such that
$$u_{c (u_0)} (T,z) \geq (1 -\epsilon) \chi_{(-D,D)} (z-Z),$$
where $\chi$ denotes the usual characteristic function. By standard estimates, one can show that the solution depends continuously, in the locally uniform topology, on the parameter $c$.
In particular, for any $c'$ close enough to $c (u_0)$, the solution $u_{c'}$ of \eqref{problemmonobis} with $c = c'$ also satisfies
$$u_{c'} (T,z) \geq (1 -2 \epsilon) \chi_{(-D,D)} (z-Z).$$
Now, by Theorem~\ref{thm:interm_0}, there exists some compactly supported initial initial datum $u_{0,1} <1$ such that the associated solution of \eqref{problemmonobis} with $c = \frac{c(u_0) + c^*}{2}  \in ( c(u_0),c^*)$ spreads. As the above discussion on the monotonicity of solutions with respect to~$c$ also applies to the solution with initial datum $u_{0,1}$, we have that the solution of \eqref{problemmonobis} with initial datum $u_{0,1}$ also spreads for any $c \in (c (u_0) , \frac{c(u_0) + c^*}{2}   )$. Now choose $D >0$ large enough and $\epsilon >0$ small enough so that
$$u_{0,1} (z) < (1 - 2 \epsilon) \chi{(-D,D)} (z).$$
Then, for any $c' > c (u_0)$ but close enough, we have
$$u_{c'} (T,z) \geq u_{0,1} (z-Z).$$
Recalling that $Z>0$ and thanks to the monotonicity of $f$ with respect to its first variable, the latter inequality and Proposition~\ref{prop:conv} imply that $u_{c'}$ spreads, which contradicts the definition of $c(u_0)$.

It only remains to show that, when $c= c(u_0)> 2\sqrt{g'(0)}$, then extinction may not occur. Proceed by contradiction and assume that $u_{c(u_0)} (t,\cdot)$ converges uniformly to 0 as $t \to +\infty$. Let us show that extinction also occurs for $c' < c (u_0)$ but close enough, contradicting the definition of~$c(u_0)$. The argument will be similar to the proof of the openness of $\Sigma_0$ in Claim~\ref{claim_sigma1}. Let $\delta >0$ be small enough so that $ c(u_0) - \delta > 2 \sqrt{g'(0)}$ and, as in the proof of Claim~\ref{claim_sigma1}, choose $\eta \in (0, \alpha^*_{c (u_0) - \delta})$ small enough so that the function
$$\overline{u} (z) :=  \eta e^{-\frac{c(u_0)-\delta}{2} z}$$
satisfies
$$\overline{u} '' + (c (u_0) - \delta) \overline{u} ' + f(z,\overline{u}) \leq 0 ,$$
for all $z \in \R$. As $\overline{u}$ is a decreasing function, it immediately follows that it is a supersolution of~\eqref{problemmonobis} for any $c' \in [c(u_0) - \delta, c(u_0)]$. Moreover, using again the monotonicity of solutions of~\eqref{problemmono} (in the non moving frame) with respect to~$c$, and the uniform boundedness of the solution for any bounded initial datum, there exists $M>0$ such that for all $c' \in [c (u_0) -\delta, c (u_0)]$,
$$\|u_{c'} \|_{L^\infty (\R_+ \times \R)} \leq \|u_{c (u_0) - \delta}\|_{L^\infty (\R_+ \times \R)} < M.$$
Next, let $Z >0$ be such that the support of $u_0$ is included in $(-Z,Z)$, that $\overline{u} (z) \geq M$ for any $z \leq -Z$ as well as, by Proposition~\ref{th:boundexp}, 
$$\forall t \geq 0, \ \forall z \geq Z , \quad u_{c (u_0)} (t,z) \leq \frac{\overline{u} (z)}{2}.$$
Because $u_{c (u_0)}$ goes extinct as $t \to +\infty$, there also exists $T>0$ such that
$$\forall | z | \leq Z , \quad u_{c (u_0)} (T,z) \leq \frac{\overline{u} (z)}{2}.$$
Up to reducing $\delta$ and by continuity of solutions of \eqref{problemmonobis} with respect to $c$ in the locally uniform topology, we get for any $c' \in [ c (u_0)  - \delta, c (u_0)]$ that
$$\forall 0 \leq t \leq T,  \quad u_{c' } (t, Z) \leq \overline{u} (Z),$$
and 
$$\forall | z | \leq Z , \quad u_{c '} (T,z) \leq \overline{u} (z).$$
Applying a comparison principle on $(0,T) \times (Z,+\infty)$, and since $\overline{u} (z) \geq \|u_{c (u_0)-\delta} \|_{L^\infty (\R_+ \times \R)}$ for all $z \leq -Z$, one can check that for all $c' \in[c(u_0)-\delta,c(u_0)]$,
$$\forall z \in \R ,\quad u_{c'} (T,z) \leq \overline{u} (z).$$
Thus $u_{c'} (t,z) \leq \overline{u} (z)$ for all $z \in \R$ and $t \geq T$. Proposition~\ref{prop:conv}, together with the fact that $\overline{u} (0) = \eta < \alpha^*_{c (u_0) - \delta} \leq \alpha^*_{c(u_0)}$ (from Theorem \ref{th:stationary}), implies that $u_{c'} (t,z)$ converges uniformly in $z$ to 0 as $t \to +\infty$.  We have reached the wanted contradiction, and we conclude as announced that, when $c = c (u_0) > 2 \sqrt{g'(0)}$, then grounding occurs. In particular, from Theorem~\ref{th:regime} necessarily $c < c^*$, and part~$(a)$ of Theorem~\ref{th:critspeed} is proved. Noting that part~$(c)$ of Theorem~\ref{th:critspeed} simply follows from Theorem~\ref{th:regime}, this completes the proof.
\end{proof}

\begin{Rk}
Let us briefly check that none of the remaining possibilities may be ruled out: more precisely, for any $c \in [2 \sqrt{g'(0)},c^*)$, there exists an initial datum such that $c = c (u_0)$ and grounding occurs, as well as some initial datum such that $c (u_0) = 2 \sqrt{g'(0)}$ and extinction occurs at the threshold speed. Indeed, it suffices to take for any $c \in [2 \sqrt{g'(0)} , c^*)$, thanks to Theorems~\ref{th:regime} and~\ref{th:regime_sharp}, an initial datum such that grounding occurs (or, if $c =2 \sqrt{g'(0)}$, such that extinction occurs), and to observe a posteriori from Theorem~\ref{th:critspeed} that necessarily $c (u_0) = c$.
\end{Rk}

\appendix

\section{Appendix}\label{A:number_zero}

In this appendix we briefly state some properties on the number of sign changes and/or the number of zeros of the solution of a semilinear scalar parabolic equation, which we use extensively in our proofs. All of these properties are contained or easy consequences of \cite{A,DM,matano}. 

Let $h$ be a solution of the following parabolic equation
\be\label{eq:parabolic_zeros}
\partial_t h-\partial_{zz}h-c\partial_z h-b(t,z)h=0, \quad \forall t\in(0,+\infty), \:z\in I\ee
where $I$ is any interval, and the coefficient $b$ and the solution $h$ are bounded. We denote, for any time $t >0$ and provided that $h (t,\cdot) \not \equiv 0$, by $\mathcal{Z}_I [h (t,\cdot)] $ the number of sign changes of $h (t,\cdot)$. More precisely, $\mathcal{Z}_I [h (t\cdot)] = 0$ if either $h (t,\cdot) >0$ or $h (t,\cdot) < 0$, and otherwise it is defined as the supremum over all integers~$k$ such that there exists $z_1 < z_2 < ... < z_k$ in $I$ with
$$\forall i= 1, ... , k-1 , \quad h (t,z_i) . h (t , z_{i+1}) < 0.$$
Note that, when all the zeros of $h(t,\cdot)$ are simple, this definition clearly coincides with the number of zeros of $h (t,\cdot)$.\\

The main property of the number of sign changes of a solution of a semilinear one-dimensional parabolic equation is the following Sturmian principle, which extends Lemma~2.3 in~\cite{DM} which dealt with the particular case $c=0$, and similarly follows from~\cite{A,matano}:
\begin{Prop}\label{prop:chang_signR}
Let $h\not\equiv0$ be a solution of \eqref{eq:parabolic_zeros} which never vanishes on $\partial I$ the (possibly empty) boundary of $I$. Then for each $t\in\R^*_+$ the zeros of the function $h$ do not accumulate in $I$, and
\begin{enumerate}[(i)]
\item $t\mapsto Z_I [h(t,\cdot)]$ is nonincreasing;
\item if there exists $t_0>0$, $z_0\in I$ such that $h(t_0,z_0)=h_z(t_0,z_0)=0$, then 
$$Z_I  [h(t,\cdot)]>Z_I [h(s,\cdot)], \quad \forall \:0<t<t_0<s$$
whenever $Z_I [h(s,\cdot)]<\infty$.
\end{enumerate}
\end{Prop}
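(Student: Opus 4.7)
The statement is a standard Sturmian zero-number principle, extending Lemma~2.3 of~\cite{DM} (which treated $c=0$) by the inclusion of a first-order drift term. The plan is to reduce the problem to the driftless case by a moving frame change of variables, and then invoke the established results of Angenent~\cite{A} and Du--Matano~\cite{DM}.

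Concretely, I would set $\tilde h(t,z) := h(t, z+ct)$. A direct computation shows that $\tilde h$ satisfies the driftless linear parabolic equation
\begin{equation*}
\partial_t \tilde h - \partial_{zz}\tilde h = \tilde b(t,z)\, \tilde h,\qquad \tilde b(t,z) := b(t, z+ct),
\end{equation*}
with $\tilde b$ bounded. Moreover the change of variables is a strictly increasing homeomorphism in $z$ for each fixed $t$, so it sends zeros of $h(t,\cdot)$ on $I$ bijectively and order-preservingly to zeros of $\tilde h(t,\cdot)$ on the (now time-dependent) interval $\tilde I(t) := I - ct$. In particular $\mathcal{Z}_I[h(t,\cdot)] = \mathcal{Z}_{\tilde I(t)}[\tilde h(t,\cdot)]$, the degeneracy of a zero is preserved ($h(t,z_0) = \partial_z h(t,z_0)=0$ iff $\tilde h(t, z_0 - ct) = \partial_z \tilde h(t, z_0-ct)=0$), and $\tilde h$ does not vanish on $\partial \tilde I(t)$ since $h$ does not vanish on $\partial I$.

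Once reduced to the driftless equation, the non-accumulation of zeros in $I$ is a local statement which follows from the analyticity-type arguments in~\cite{A}: near any zero $z_0$ of $\tilde h(t_0,\cdot)$, a Taylor expansion and a scaling/blow-up argument show that $\tilde h(t_0,\cdot)$ behaves like a nontrivial Hermite polynomial times an exponential, hence has only isolated zeros. For (i), the monotonicity of $t \mapsto \mathcal{Z}_{\tilde I(t)}[\tilde h(t,\cdot)]$ follows by tracking the trajectories of zeros: simple zeros move continuously in $z$ and do not cross $\partial \tilde I(t)$ (thanks to the non-vanishing on the boundary), and zeros can only disappear (when two simple zeros collide) but never appear. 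For (ii), the existence of a degenerate zero at $(t_0,z_0)$ forces at least two distinct simple zeros to collide there, so the count strictly drops across $t_0$.

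The main subtlety is that $\tilde I(t)$ is time-dependent, so one cannot quote Lemma~2.3 of~\cite{DM} verbatim, which is stated on a fixed interval. To circumvent this I would either invoke Angenent's more general framework in~\cite{A}, which allows time-dependent domains and already handles a full first-order term (so in fact one may skip the change of variables altogether), or localize: for each compact sub-interval $J \Subset I$ and each bounded time window, $\tilde I(t)$ contains the translate $J - ct$ throughout, and Du--Matano's result applies on a fixed slightly larger interval after a further cut-off, yielding the conclusion on $J$ and hence on $I$ by exhaustion. This localization is the only delicate point; the rest is a direct application of the classical Sturmian machinery.
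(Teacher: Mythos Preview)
Your proposal is correct and in line with the paper: the paper does not give a proof of this proposition at all, but simply states it as a known Sturmian principle that ``extends Lemma~2.3 in~\cite{DM} which dealt with the particular case $c=0$, and similarly follows from~\cite{A,matano}''. Your change-of-variables reduction to the driftless case is a valid way to make this precise, and as you yourself note, Angenent's result~\cite{A} already treats equations with a first-order term on (possibly time-dependent) intervals, so one can bypass the moving-interval subtlety entirely by citing~\cite{A} directly---which is exactly what the paper does.
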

Note that in Proposition~\ref{prop:chang_signR}, the fact that the zeros of $h$ do not accumulate insures that $Z_I [h (t,\cdot)]$ is well-defined for all $t >0$.

\bibliography{biblioBG}
\bibliographystyle{plain}

\end{document}